\newtheorem{theorem}{Theorem}[section]
\newtheorem{lemma}[theorem]{Lemma}
\newtheorem{proposition}[theorem]{Proposition}
\theoremstyle{remark}
\newtheorem{example}[theorem]{Example}
\newtheorem{remark}[theorem]{Remark}
\theoremstyle{definition}
\newtheorem{definition}[theorem]{Definition}
\newcommand*{\abs}[1]{\lvert#1\rvert}
\newcommand*{\norm}[1]{\left\| #1\right\|}
\newcommand*{\gen}[1]{\langle#1\rangle}
\newcommand*{\oppa}[1]{\bar#1}
\newcommand*{\cl}[1]{\overline{#1}}
\newcommand*{\Ccinf}{\textup C^\infty_\textup c}
\newcommand*{\Oest}{\textup{O}}
\newcommand*{\id}{\textup{id}}
\newcommand*{\reg}{\textup{rss}}
\newcommand*{\red}{\textup{red}}
\newcommand*{\GL}{\textup{GL}}
\newcommand*{\diff}{\textup d}
\newcommand*{\nb}{\nobreakdash}
\newcommand*{\defeq}{\mathrel{\vcentcolon=}}
\newcommand*{\eqdef}{\mathrel{=\vcentcolon}}
\DeclareMathOperator{\Endo}{End}
\DeclareMathOperator{\Aut}{Aut}
\DeclareMathOperator{\Ad}{Ad}
\DeclareMathOperator{\tr}{tr}
\DeclareMathOperator{\Span}{span}
\DeclareMathOperator{\Ind}{Ind}
\DeclareMathOperator{\dom}{dom}
\DeclareMathOperator{\image}{im}
\DeclareMathOperator{\hgt}{ht}
\newcommand*{\Lie}[1][\F]{\operatorname{Lie}_{#1}}
\newcommand{\C}{\mathbb C}
\newcommand{\R}{\mathbb R}
\newcommand{\Q}{\mathbb Q}
\newcommand{\Z}{\mathbb Z}
\newcommand{\N}{\mathbb N}
\newcommand{\F}{\mathbb F}
\newcommand{\K}{\mathbb K}
\newcommand*{\Galg}{\mathcal G}
\newcommand*{\G}{G}
\newcommand*{\Talg}{\mathcal T}
\newcommand*{\T}{T}
\newcommand*{\Salg}{\mathcal S}
\newcommand*{\ST}{S}
\newcommand*{\Zalg}[1][\Galg]{\mathcal Z(#1)}
\newcommand*{\Ze}[1][\G]{\textup Z(#1)}
\newcommand*{\Zcalg}[1][\Galg]{\mathcal Z_\textup c(#1)}
\newcommand*{\Zc}[1][\G]{\textup Z_\textup c(#1)}
\newcommand*{\Ralg}[1][\Palg]{\mathcal R_\textup u(#1)}
\newcommand*{\Ru}[1][\Pa]{\textup R_\textup u(#1)}
\newcommand*{\Ualg}{\mathcal U}
\newcommand*{\Un}{U}
\newcommand*{\UC}[1][\Omega]{U_{#1}}
\newcommand*{\NC}[1][\Omega]{N_{#1}}
\newcommand*{\PC}[1][\Omega]{P_{#1}}
\newcommand*{\tUC}[1][\Omega]{\tilde{U}_{#1}}
\newcommand*{\Calg}[1][\Galg]{\mathcal Z_{#1}}
\newcommand*{\Ce}[1][\G]{\textup Z_{#1}}
\newcommand*{\Nalg}[1][\Galg]{\mathcal N_{#1}}
\newcommand*{\No}[1][\G]{\textup N_{#1}}
\newcommand*{\Lalg}{\mathcal M}
\newcommand*{\Le}{M}
\newcommand*{\Palg}{\mathcal P}
\newcommand*{\Pa}{P}
\newcommand*{\CG}{K}
\newcommand*{\integ}{\mathcal O}
\newcommand*{\maxid}{\mathfrak P}
\newcommand*{\Zinteg}{\mathfrak Z}
\newcommand*{\Apa}[1][\ST]{A_{#1}} 
\newcommand*{\tApa}[1]{\tilde{A}_{#1}} 
\newcommand*{\piTx}{\pi_\T (x)}
\newcommand*{\Kx}{K_x} 
\newcommand*{\sd}{\textup{sd}}
\newcommand*{\Phr}{\Phi^\red}
\newcommand*{\Hec}{\mathcal H (G,\mathbb Z [1/p])}
\newcommand*{\ide}[1]{\langle #1 \rangle}
\newcommand*{\repr}{\pi}
\newcommand*{\trpi}{\tr_\pi}
\newcommand*{\Ho}{\textup H}
\newcommand*{\Cell}[1][*]{C_{#1}}
\newcommand*{\Hull}{\mathcal H}
\newcommand*{\Hecke}{\mathcal H}
\newcommand*{\eF}{e_{\tilde\F/\F}}
\newcommand*{\rmax}{r(\gamma)}
\newcommand{\ceil}[1]{\lceil#1{+}\rceil}
\newcommand{\inp}[2]{\langle#1,#2\rangle}
\newcommand{\bt}[2][\Galg]{\mathcal B(#1,#2)}
\begin{document}
\title[Characters and growth of admissible representations]{Characters and growth of admissible\\
representations of reductive \(p\)-adic groups}
\author{Ralf Meyer}
\email{rameyer@uni-math.gwdg.de}
\author{Maarten Solleveld}
\email{maarten@uni-math.gwdg.de}

\address{Mathematisches Institut and Courant Centre ``Higher order structures''\\
  Georg-August Universit\"at G\"ottingen\\
  Bunsenstra{\ss}e 3--5\\
  37073 G\"ottingen\\
  Germany}

\begin{abstract}
  We use coefficient systems on the affine Bruhat--Tits building to study admissible representations of reductive \(p\)\nb-adic groups in characteristic not equal to~\(p\).  We show that the character function is locally constant and provide explicit neighbourhoods of constancy.  We estimate the growth of the subspaces of invariants for compact open subgroups.\\
{\sc Mathematics Subject Classification} (2010). 20G25, 20E42
\end{abstract}
\thanks{Supported by the German Research Foundation (Deutsche
  Forschungsgemeinschaft (DFG)) through the Institutional Strategy of
  the University of G\"ottingen.}
\maketitle
\tableofcontents

\section{Introduction}
\label{sec:intro}

Let~\(\F\) be a non-Archimedean local field, possibly of nonzero characteristic, and let~\(\G\) be a reductive algebraic group over~\(\F\), briefly called a reductive \(p\)\nb-adic group.  Let~\(\repr\) be an admissible representation of~\(\G\) on a complex vector space~\(V\).  Since~\(V^\CG\) has finite dimension for every compact open subgroup \(\CG \subseteq \G\), the operator \(\repr(f)\) has finite rank for all test functions~\(f\).  The resulting distribution \(\theta_\repr(f) \defeq \tr(\repr (f) ,V)\) is called the \emph{character} of~\(\repr\).  Since~\(V\) usually has infinite dimension, the operators \(\repr(g)\) need not be trace-class for \(g \in\nobreak \G\).  Nevertheless, Harish-Chandra could show that the character is described by a locally integrable function:

\begin{theorem}[Harish-Chandra]
  \label{thm:HC}
  Let \(\repr\colon \G\to\Aut(V)\) be an admissible representation of a reductive \(p\)\nb-adic group.

  \begin{enumerate}[label=\textup{(\alph{*})}]
  \item\label{thm:HC_a} The operator \(\repr(g)\) has a well-defined trace \(\trpi(g)\) when~\(g\) belongs to the set \(\G_\reg\) of regular semisimple elements.
  \item\label{thm:HC_b} The function \(\trpi\colon \G_\reg \to \C\) is locally constant.
  \item\label{thm:HC_c} The function~\(\trpi\), extended by~\(0\) on \(\G\setminus \G_\reg\), is locally integrable with respect to the Haar measure~\(\mu\) on~\(\G\), and for any test function~\(f\),
    \[
    \theta_\repr(f) = \int_\G f(g) \trpi(g) \,\diff\mu(g).
    \]

  \item\label{thm:HC_d} Let \(D(g)\) for \(g\in \G_\reg\) be the determinant of \(\Ad(g) - 1\) acting on \(\Lie[](\G) \mathbin/ \Lie[](\T)\) for a maximal torus~\(\T\) in~\(\G\) containing~\(g\).  The function \(\G \ni g \mapsto |D(g)|^{1/2} \trpi (g)\) is locally bounded.
  \end{enumerate}
\end{theorem}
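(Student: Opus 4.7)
The plan is to reduce everything to finite-dimensional linear algebra via a coefficient system on the affine Bruhat--Tits building $\bt{\F}$. Using a Schneider--Stuhler-type construction, one attaches to each facet $\sigma$ a compact open subgroup $\UC[\sigma]$ of the parahoric stabilizer $\PC[\sigma]$, and obtains an exact resolution of~$V$
\[
\cdots \to \bigoplus_{\dim\sigma=1} \Ind_{\PC[\sigma]}^{\G} V^{\UC[\sigma]} \to \bigoplus_{\dim\sigma=0} \Ind_{\PC[\sigma]}^{\G} V^{\UC[\sigma]} \to V \to 0,
\]
in which each $V^{\UC[\sigma]}$ is finite-dimensional by admissibility. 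For part~\ref{thm:HC_a}, given $g\in\G_\reg$ with centralizer the maximal torus~$\T$, Bruhat--Tits theory applied to~$\T$ produces $g$\nb-stable facets $\sigma$ of $\bt{\F}$ lying in the apartment $\bt[\T]{\F}$, so that $\pi(g)$ acts on the finite-dimensional space $V^{\UC[\sigma]}$. A Lefschetz-type summation over such facets,
\[
\trpi(g) \defeq \sum_{\sigma \subset \bt[\T]{\F}} (-1)^{\dim \sigma} \tr\bigl(\pi(g)\mid V^{\UC[\sigma]}\bigr),
\]
gives a well-defined trace, the independence of the chosen exhaustion of $\bt[\T]{\F}$ being checked by passing to a common refinement of any two choices.

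For~\ref{thm:HC_b}, if $g' = gk$ with $k$ in a sufficiently small compact open subgroup~$K$ contained in all $\UC[\sigma]$ relevant to the sum above, then $\pi(g')$ acts on each $V^{\UC[\sigma]}$ exactly as $\pi(g)$ does, because $\pi(k)$ is the identity on these subspaces. Hence $\trpi$ is constant on the open neighborhood $gK$, giving local constancy.

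Part~\ref{thm:HC_d} is the main obstacle. One must bound $\abs{D(g)}^{1/2}\abs{\trpi(g)}$ uniformly on compact subsets of~$\G$. For fixed admissible~$\pi$, the dimensions $\dim V^{\UC[\sigma]}$ are uniformly bounded across facets of a given type, so the task reduces to counting the facets in $\bt[\T]{\F}$ that contribute nontrivially to the Lefschetz sum. The formula $\abs{D(g)} = \prod_\alpha\abs{\alpha(g)-1}$, with the product over roots not vanishing on~$\T$, shows that $\abs{D(g)}^{-1}$ controls the thickness of the region in $\bt[\T]{\F}$ on which~$g$ acts near-trivially; extracting the sharp exponent $\tfrac12$ from a careful root-by-root facet count, uniformly in~$g$, is the hardest part.

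Finally,~\ref{thm:HC_c} follows from~\ref{thm:HC_d} by Weyl integration. Rewriting
\[
\int_\G f\cdot\trpi\,\diff\mu = \sum_{[\T]} \frac{1}{\abs{W(\T)}} \int_{\T_\reg} \abs{D(t)}\,\trpi(t) \int_{\G/\T} f(xtx^{-1})\, \diff\mu(x\T)\, \diff t,
\]
each inner integral converges absolutely, because $\abs{D(t)}\abs{\trpi(t)} \le C\abs{D(t)}^{1/2}$ locally by~\ref{thm:HC_d} while $\abs{D(t)}^{1/2}$ is locally integrable on~$\T$. The identification of the right-hand side with the character distribution $\theta_\pi(f)$ then follows from the same coefficient-system description of $\pi(f)$.
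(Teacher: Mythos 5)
This statement is Harish-Chandra's theorem, which the paper quotes as background \emph{without proof}: the authors explicitly state that the original proof is in Harish-Chandra's papers, that parts \ref{thm:HC_c} and \ref{thm:HC_d} rely on the exponential map, and that their own building-theoretic methods are ``insufficient to (re)prove them.'' What the paper does prove (Theorems \ref{thm:localconst}, \ref{thm:trJacquet}, \ref{thm:trconst}) is a quantitative, purely algebraic version of \ref{thm:HC_a} and \ref{thm:HC_b}. Your strategy for those two parts is indeed in the spirit of the paper's, but it has gaps. First, a non-compact regular semisimple element fixes \emph{no} point of the building, so your Lefschetz sum over $g$\nb-stable facets is empty for such $g$; the paper handles this case by parabolic descent to the Levi $\Le_\gamma$ of the parabolic contracted by $\gamma$ (Casselman/Vign\'eras, Theorem \ref{thm:trJacquet}), which is an essential extra step you omit. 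Second, even for compact $\gamma$ the fixed-point set is not the ``apartment of $\T$'' (for non-split $\T$ there is no such apartment in $\bt{\F}$, and even in the split case the fixed-point set has unbounded ``hairs'' off the apartment), and the well-definedness of the alternating sum as a limit over finite convex subcomplexes is exactly the acyclicity theorem for coefficient systems — it is not obtained by ``passing to a common refinement.''

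The more serious problem is parts \ref{thm:HC_c} and \ref{thm:HC_d}. Your argument there is a plan, not a proof: you acknowledge that ``extracting the sharp exponent $\tfrac12$ \dots is the hardest part'' and supply no mechanism for it. The paper demonstrates in Section \ref{sec:bound} that precisely this route fails: the bound $\abs{\trpi(\gamma)} \le \norm{\ide{\CG_0}\gamma\ide{\CG_0}}_\infty \dim V^{\CG_e}$ with $e \approx \sd(\gamma)$, combined with the growth rate of $\dim V^{\CG_e}$, yields a divergent series already for $\GL_2$, so the dimension/facet count alone cannot give local integrability, and the authors state they could not identify the additional cancellation. Harish-Chandra's actual proof of \ref{thm:HC_c} and \ref{thm:HC_d} goes through the exponential map, descent to the Lie algebra, and finiteness of orbital-integral expansions — machinery entirely absent from your sketch. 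So \ref{thm:HC_a} and \ref{thm:HC_b} need the parabolic-descent repair, and \ref{thm:HC_c}, \ref{thm:HC_d} are not proved.
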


The original proof of this deep theorem is distributed over various papers of Harish-Chandra collected in~\cite{Harish-Chandra:Collected_IV}.  A complete account of it can be found in~\cite{Harish-Chandra:Admissible_distributions}.  The proofs of \ref{thm:HC_c} and~\ref{thm:HC_d} use the exponential mapping for~\(\G\), which only works well if the characteristic of~\(\F\) is zero.  It is reasonable to expect that \ref{thm:HC_c} and~\ref{thm:HC_d} are valid in non-zero characteristic as well, but the authors are not aware of a proof.  According to \cite{Vigneras-Waldspurger:Premiers}*{paragraph E.4.4} Harish-Chandra's proof of \ref{thm:HC_a} and \ref{thm:HC_b} remains valid if one replaces~\(\C\) by an algebraically closed field of characteristic unequal to~\(p\).

In this article we generalise part of Theorem~\ref{thm:HC} to representations on modules over unital rings in which~\(p\) is invertible.  In this purely algebraic setting, we can only define the character as a function where it is locally constant.  To prove \ref{thm:HC_a} and~\ref{thm:HC_b}, we describe explicit neighbourhoods on which~\(\trpi\) is constant.  In characteristic~\(0\), similar results are due to Adler and Korman~\cite{Adler-Korman:Local_character}.

Parts \ref{thm:HC_c} and~\ref{thm:HC_d} seem specific to real or complex representations because they involve analysis.  Unfortunately, our methods are insufficient to (re)prove them, as we discuss in the last section.

As a substitute we estimate the dimension of invariant subspaces~\(V^\CG\) for certain compact open subgroups~\(\CG\) in~\(\G\).  The authors have not found growth estimates for these dimensions in the literature.  Since~\(V^\CG\) is the range of an idempotent~\(\ide{\CG}\) in the Hecke algebra associated to~\(\CG\), we get
\[
\dim V^\CG = \frac{1}{\abs{\CG}} \int_\CG \trpi(g)\,\diff\mu(g).
\]
But the estimate in~\ref{thm:HC_d} is not strong enough to control these integrals.

Our methods are of a geometric nature and involve the affine building of~\(\G\).  Thus we will make extensive use of Bruhat--Tits theory, including some hard parts.  At the same time, we use only little representation theory.  Both of our main results use the resolutions constructed by Schneider and Stuhler
\cite{Schneider-Stuhler:Rep_sheaves}.  These resolutions are based on a family of compact open subgroups~\(\UC[x]^{(e)}\) for \(e\in\N\), indexed by vertices of the affine Bruhat--Tits building.  These generate subgroups~\(\UC[\sigma]^{(e)}\) indexed by polysimplices in the building.  The invariant subspaces~\(V^{\UC[\sigma]^{(e)}}\) in an admissible representation~\(V\) form a locally finite-dimensional coefficient system on the building.  It is shown in~\cite{Meyer-Solleveld:Resolutions} that this coefficient system is acyclic on any convex subcomplex of the building.  In particular, it provides a resolution of~\(V\) of finite type.

Here we need acyclicity also for finite subcomplexes of the building because this provides chain complexes of finite-dimensional vector spaces, which are used in~\cite{Meyer-Solleveld:Resolutions} to express the character of~\(V\) as a sum over contributions of polysimplices in the building.  We use this formula to find for each regular semisimple element~\(\gamma\) and each vertex~\(x\) in the building a number~\(r\) such that the character is constant on~\(\UC[x]^{(r)}\gamma\); the constant~\(r\) depends on the distance between~\(x\) and a subset of the building corresponding to the maximal torus containing~\(\gamma\), on the (ir)regularity of~\(\gamma\), and on the level of the representation~\(V\), that is, on the smallest \(e\in\N\) such that~\(V\) is generated by the \(\UC[y]^{(e)}\)\nb-invariants for all vertices~\(y\).

Along the way, we also prove some auxiliary results that may be useful in other contexts.  
We prove that the parabolic subgroup contracted by an element of a reductive \(p\)\nb-adic group is 
indeed parabolic and, in particular, algebraic (Proposition~\ref{prop:Pg}).
We describe which points in the building are fixed by a semisimple element in Section~\ref{sec:fixedpoints}.  We establish that the level of representations is preserved by Jacquet induction and restriction
(Proposition~\ref{prop:levele}).  The relationship between character function and distribution is made 
precise in an algebraic setting in Section~\ref{sec:admissible}.

\section{The structure of reductive algebraic groups}
\label{sec:reductive}

We fix our notation and recall some general facts from the theory of linear algebraic groups.  Nothing in this section is new and most of it can be found in several textbooks, for example~\cite{Springer:Linear_algebraic_groups}.

Let~\(\Galg\) be a linear algebraic group defined over a field~\(\F\).  The collections of characters and cocharacters of~\(\Galg\) are denoted by \(X^* (\Galg)\) and \(X_* (\Galg)\), respectively.  Let \(\G \defeq \Galg (\F)\) be its group of \(\F\)\nb-rational points.  By definition, an algebraic (co)character of~\(\G\) is a (co)character of~\(\Galg\) that is defined over~\(\F\).  The corresponding sets are denoted by \(X^* (\G)\) and \(X_* (\G)\).  Let~\(\Zalg\) be the centre of~\(\Galg\) and let~\(\Zcalg\) be the maximal connected algebraic subgroup of~\(\Zalg\).  We denote the centraliser in~\(\Galg\) of an element \(g \in \G\) by \(\Calg(g)\).

We will assume throughout that \(\Galg\) is connected and reductive.  An algebraic subgroup~\(\Palg\) of~\(\Galg\) is \emph{parabolic} if~\(\Galg/\Palg\) is a complete algebraic variety.  We denote the unipotent radical of~\(\Palg\) by~\(\Ralg\).  A \emph{Levi factor} of~\(\Palg\) is a reductive subgroup~\(\Lalg\) such that \(\Palg = \Lalg \ltimes \Ralg\).

We write \(\Ze\), \(\Zc\), \(\Pa\), \(\Ru\), and~\(\Le\) for the groups of \(\F\)\nb-points of \(\Zalg\), \(\Zcalg\), \(\Palg\), \(\Ralg\), and \(\Lalg\), respectively.  We denote the space of \(\F\)\nb-points of the Lie algebra of~\(\Galg\) by \(\Lie(\Galg)\).

We say that an algebraic torus~\(\Talg\) \emph{splits over~\(\F\)} if \(\Talg(\F) \cong (\F^\times )^{\dim \Talg}\) as \(\F\)\nb-groups.  We say that~\(\G\) \emph{splits} (over~\(\F\)) if there is a maximal torus~\(\Talg\) of~\(\Galg\) that splits over~\(\F\).

\begin{proposition}
  \label{prop:splitG}
  There is a finite Galois extension of~\(\F\) over which~\(\Galg\) splits.
\end{proposition}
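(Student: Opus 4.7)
The plan is to first produce a maximal torus $\Talg$ of $\Galg$ defined over~$\F$, and then show that any $\F$\nb-torus splits over a finite Galois extension of~$\F$.

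For the first step I would invoke the standard existence result for maximal tori in connected reductive groups over an arbitrary field: every connected reductive $\F$\nb-group contains a maximal torus defined over~$\F$ (see, e.g., \cite{Springer:Linear_algebraic_groups}, Corollary~13.3.12 or the equivalent statement in Borel's book). Call this torus $\Talg$. Since $\Talg$ becomes a split torus after base change to the separable closure $\F^{\mathrm{sep}}$, the whole problem reduces to finding a finite Galois extension $\tilde\F/\F$ inside $\F^{\mathrm{sep}}$ over which $\Talg$ already splits; for such an extension we will have $\Talg(\tilde\F) \cong (\tilde\F^\times)^{\dim \Talg}$, and hence $\Galg$ splits over~$\tilde\F$ by the definition given just above.

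The key step is the following: the absolute Galois group $\Gamma \defeq \mathrm{Gal}(\F^{\mathrm{sep}}/\F)$ acts continuously on the finitely generated free abelian group $X^*(\Talg_{\F^{\mathrm{sep}}})$. Since this character lattice carries the discrete topology, continuity forces the action to factor through a finite quotient $\Gamma \epi \mathrm{Gal}(\tilde\F/\F)$, where $\tilde\F$ is the fixed field of the kernel. Then $\tilde\F/\F$ is a finite Galois extension, and over $\tilde\F$ the Galois action on characters is trivial. A torus whose character group carries a trivial Galois action is split; hence $\Talg$ splits over $\tilde\F$, and therefore so does~$\Galg$.

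I do not anticipate a serious obstacle: the only nontrivial input is the existence of an $\F$\nb-rational maximal torus, which is a standard and highly nontrivial theorem of Borel--Springer that I would simply cite. The remaining argument, namely that the splitting field of an $\F$\nb-torus is a finite Galois extension of~$\F$, is an elementary consequence of the finite rank of the character lattice and the continuity of the Galois action.
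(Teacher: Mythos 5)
Your proposal is correct and follows essentially the same route as the paper, which simply cites Ono's result that tori split over a finite Galois extension and notes that this implies the statement for reductive groups via an $\F$\nb-rational maximal torus; you have merely filled in the standard details of both the reduction (existence of a maximal torus over~$\F$) and the torus case (continuity of the Galois action on the discrete character lattice). No gaps.
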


\begin{proof}
  For tori this was first proven by Ono \cite{Ono:Arithmetic}*{Proposition 1.2.1}.  This implies the result for general reductive groups.
\end{proof}

Let~\(\Salg\) be maximal among the tori in~\(\Galg\) that split over~\(\F\) and let \(\ST \defeq \Salg (\F)\).  We call~\(\ST\) a \emph{maximal split torus} in~\(\G\).  Notice that every algebraic (co)character of~\(\Salg \) is defined over~\(\F\), as~\(\ST\) is split.  Let \(\Phi= \Phi (\Galg, \Salg) \subset X^*(\Salg)\) be the root system of~\(\Galg\) with respect to~\(\Salg\), and let \(\Phi^\vee \subset X_*(\Salg)\) be the dual root system.  Let \(\Calg(\Salg)\) and \(\Nalg(\Salg)\) denote the centraliser and the normaliser of~\(\Salg\) in~\(\Galg\) and let \(\Ce(\ST)\) and \(\No(\ST)\) be their groups of \(\F\)\nb-points.  The \emph{Weyl group} of~\(\Phi\) is
\[
W(\Phi) \defeq \No(\ST)  \bigm/ \Ce(\ST).
\]
The root system~\(\Phi\) need not be reduced if~\(\G\) is not split.  The corresponding reduced root system is
\begin{equation}
  \Phr \defeq \{\alpha \in \Phi(\Galg,\Salg) :
  \alpha/2 \notin \Phi(\Galg,\Salg)\}.
\end{equation}
For every root \(\alpha \in \Phi(\Galg,\Salg)\) there is a unipotent algebraic subgroup \(\Ualg_\alpha \subset \Galg\) with group of \(\F\)\nb-points~\(\Un_\alpha\), characterised by the following two conditions:
\begin{itemize}
\item \(\Calg(\Salg)\) normalises~\(\Ualg_\alpha\),
\item \(\Lie(\Ualg_\alpha)\) is the sum of the \(\ST\)\nb-weight spaces for \(\alpha\) and~\(2 \alpha\), with respect to the adjoint action of~\(\ST\) on \(\Lie(\Galg)\).
\end{itemize}
If \(\alpha, 2 \alpha \in \Phi\) then \(\Un_{2 \alpha} \subsetneq \Un_\alpha\), and it is convenient to write \(\Un_{2 \alpha} = \{1\}\) if \(\alpha \in \Phi\) but \(2 \alpha \notin \Phi\).  The groups \(\Un_\alpha / \Un_{2 \alpha}\) and~\(\Un_{2 \alpha}\) are naturally endowed with the structure of an \(\F\)\nb-vector space and are isomorphic to their respective Lie algebras.  The subset \(\bigcup_{\alpha \in \Phr} \Un_\alpha \cup \Ce(\ST)\) generates the group~\(\G\).

Let~\(\Phi^+\) be a system of positive roots in~\(\Phi\) and let \(\Delta \subseteq \Phr\) be the corresponding basis.  Any subset \(D \subseteq \Delta\) is a basis of a root system \(\Phi_D \defeq \Z D \cap \Phi\).  The algebraic subgroup~\(\Palg_D\) of~\(\Galg\) generated by \(\Calg(\Salg)\) and the~\(\Ualg_\alpha\) with \(\alpha \in \Phi_D \cup \Phi^+\) is parabolic.  Its unipotent radical is generated by the~\(\Ualg_\alpha\) with \(\alpha \in \Phi^+ \setminus \Phi_D^+\).  The group~\(\Lalg_D\) that is generated by \(\bigcup_{\alpha \in \Phi_D} \Ualg_\alpha \cup \Calg(\Salg)\) is a Levi subgroup of~\(\Palg_D\).  Moreover, \(\Lalg_D = \Calg(\Salg_D)\), where~\(\Salg_D\) is the connected component of
\[
\bigcap_{\alpha \in \Phi_D} \ker \alpha
\subseteq \Zalg[\Lalg_D].
\]
We note that~\(\Palg_\emptyset\) is a Borel subgroup of~\(\Galg\), that \(\Palg_\Delta = \Lalg_\Delta = \Galg\), and that~\(\Salg_\Delta (\F)\) is the unique maximal split torus of~\(\Zalg\).

\begin{definition}
  \label{def:standard_parabolic}
  Groups of the form~\(\Palg_D\) are called \emph{standard parabolic} (with respect to \(\Salg\) and~\(\Phi^+\)).
\end{definition}

Every parabolic subgroup of~\(\Galg\) is conjugate to exactly one standard parabolic subgroup.  Let \(\Phi^- \defeq - \Phi^+\) be the set of negative roots and let \(\oppa{\Palg_D}\) be subgroup of~\(\Galg\) generated by \(\Calg(\Salg)\) and the~\(\Ualg_\alpha\) with \(\alpha\in\Phi_D \cup \Phi^-\).  The parabolic subgroup \(\oppa{\Palg_D}\) is \emph{opposite} to~\(\Palg_D\) in the sense that \(\Palg_D \cap \oppa{\Palg_D} = \Lalg_D\) is a Levi subgroup of both.  Moreover
\[
\Lie (\Galg) = \Lie \bigl(\Ralg[\Palg_D]\bigr)
\oplus \Lie (\Lalg_D) \oplus \Lie \bigl(\Ralg[\bar{\Palg}_D]\bigr).
\]
We shall also need the \emph{pseudo-parabolic} subgroup
\begin{equation}
  \label{eq:pseudop}
  \Pa (\chi) \defeq \{ p \in \G : \lim_{\lambda \to 0} \chi(\lambda) p \chi(\lambda)^{-1} \text{ exists} \}
\end{equation}
for an algebraic cocharacter \(\chi\colon \F^\times\to\G\).  This limit is meant purely algebraically, by definition it exists if and only if the corresponding map \(\F^\times \to \G\) extends to an algebraic morphism \(\F \to \G\).  In a reductive group, any pseudo-parabolic subgroup is the group of \(\F\)\nb-points of a parabolic subgroup by \cite{Springer:Linear_algebraic_groups}*{Lemma 15.1.2}.

From now on we assume that the field~\(\F\) is endowed with a non-trival discrete valuation \(v\colon \F \to \Q \cup \{\infty\}\).  We fix a real number \(q > 1\) and we define a metric on~\(\F\) by
\[
d(\lambda , \mu) = q^{-v (\lambda - \mu)}.
\]
Via an embedding \(\Galg \to \GL_n \), the metric~\(d\) yields a metric on \(\G = \Galg(\F)\) as well.  Even though there is no unique way to do this, the resulting collection of bounded subsets of~\(\G\) is canonical.  This bornology on~\(\G\) is compatible with the group structure, in the sense that \(B_1^{-1} B_2\) is bounded for all bounded subsets \(B_1\) and~\(B_2\) of~\(\G\).

It follows directly from the properties of a valuation that every finitely generated subgroup of \((\F ,+)\) is bounded, and this implies that every unipotent element of~\(\G\) generates a bounded subgroup.

Following Deligne~\cite{Deligne:Support}, we assign to any \(g \in \G\) the \emph{parabolic subgroup contracted by~\(g\)},
\begin{equation}
  \label{eq:Pg}
  \Pa_g \defeq \bigl\{p \in \G :
  \text{\(\{ g^n p g^{-n} : n \in \N \}\) is bounded} \bigr\},
\end{equation}
and
\begin{equation}
  \label{eq:Mg}
  \Le_g \defeq \Pa_g \cap \Pa_{g^{-1}}
  =  \bigl\{ p \in \G : \text{\(\{ g^n p g^{-n} : n \in \Z\}\) is bounded}\bigr\}.
\end{equation}

The following result, which will be needed in Section~\ref{sec:constancygen}, was proved in \cite{Prasad:Elementary_Proof}*{Lemma 2} under the additional assumptions that~\(\Galg\) is semisimple and almost \(\F\)\nb-simple.  Although it is apparently well-known that it holds for general reductive groups, the authors have not found a good reference for this.

\begin{proposition}
  \label{prop:Pg}
  The subgroups \(\Pa_g\) and~\(\Le_g\) for \(g\in \G\) have the following properties:
  \begin{enumerate}[label=\textup{(\alph{*})}]
  \item\label{prop:Pga} \(\Pa_g\) is a parabolic subgroup of \(\G\).
  \item\label{prop:Pgb} \(\Ru[\Pa_g] = \{ p \in \G : \lim_{n\to\infty} g^npg^{-n}=1\}\).
  \item\label{prop:Pgc} The parabolic subgroup \(\Pa_{g^{-1}}\) is opposite to~\(\Pa_g\) and~\(\Le_g\) is a Levi subgroup of~\(\Pa_g\).
  \item\label{prop:Pgd} \(g \Ze[\Le_g]\) is contained in a bounded subgroup of \(\Le_g\mathbin/\Ze[\Le_g]\).
  \end{enumerate}
\end{proposition}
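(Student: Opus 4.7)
The plan is to identify $\Pa_g$ with a pseudo-parabolic subgroup $\Pa(\chi)$ for a suitable $\F$-rational cocharacter $\chi$; since pseudo-parabolic subgroups of reductive groups are parabolic by the cited Lemma 15.1.2 of Springer, part~\ref{prop:Pga} will follow, and parts \ref{prop:Pgb}--\ref{prop:Pgd} will then drop out from the explicit description in terms of root subgroups.

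The first reduction is to a semisimple element. Writing $g=g_s g_u$ in Jordan form (passing to $\F^\textup{sep}$ if necessary and descending by Galois-invariance), the unipotent factor $g_u$ generates a bounded subgroup, and since $g_s$ and $g_u$ commute, the identity $g^n p g^{-n}=g_u^n(g_s^n p g_s^{-n})g_u^{-n}$ combined with the compatibility of the bornology with the group structure gives $\Pa_g=\Pa_{g_s}$ and $\Le_g=\Le_{g_s}$. Next, fix an $\F$-maximal torus $\Talg$ containing $g_s$ and decompose it as an almost direct product of its maximal $\F$-split subtorus $\Salg_0\subseteq\Talg$ and its maximal $\F$-anisotropic subtorus $\Talg_a$; since $\Talg_a(\F)$ is bounded, after replacing $g_s$ by a suitable power we can write $g_s = s\cdot t$ with $s\in\Salg_0(\F)$ and $t\in\Talg_a(\F)$ bounded, and the same commuting argument as above gives $\Pa_g=\Pa_s$.

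The homomorphism $X^*(\Salg_0)\to\Z$, $\alpha\mapsto v(\alpha(s))$, determines a rational cocharacter of $\Salg_0$; some integer multiple $\chi\in X_*(\Salg_0)$ then satisfies $\inp{\alpha}{\chi}=N\cdot v(\alpha(s))$ for all $\alpha$. For each root $\alpha\in\Phi(\Galg,\Salg_0)$ and each $u\in\Un_\alpha$, conjugation by $s$ scales $u$ by $\alpha(s)$ (up to the standard contribution of $\Un_{2\alpha}$), so $\{s^n u s^{-n}\}$ is bounded iff $v(\alpha(s))\ge 0$ iff $\inp{\alpha}{\chi}\ge 0$, and the same bookkeeping shows $\lim_{\lambda\to 0}\chi(\lambda)u\chi(\lambda)^{-1}$ exists under the same condition. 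Using a Bruhat-type decomposition relative to $\Ce(\Salg_0)$ and the $\Un_\alpha$, this gives $\Pa_s = \Pa(\chi)$, proving~\ref{prop:Pga}. Part~\ref{prop:Pgb} follows at once, since $\Ru[\Pa(\chi)]$ is generated by the $\Un_\alpha$ with $\inp{\alpha}{\chi}>0$, and for these $s^n u s^{-n}\to 1$. For~\ref{prop:Pgc}, the cocharacter attached to $s^{-1}$ is $-\chi$, so $\Pa_{g^{-1}}=\Pa(-\chi)=\oppa{\Pa(\chi)}$ and $\Le_g=\Pa(\chi)\cap\Pa(-\chi)=\Ce(\chi)$ is a Levi.

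For~\ref{prop:Pgd}, note that $\chi(\F^\times)\subseteq \Zc[\Le_g]$, and by construction $s^N$ and $\chi(\unif)$ lie in $\Salg_0(\F)$ with the same valuation profile, so $s^N\chi(\unif)^{-1}$ is bounded in $\Salg_0(\F)$. Hence $s^N \Ze[\Le_g]$ lies in a bounded subgroup of $\Le_g\mathbin/\Ze[\Le_g]$, and combining with the bounded factors $t$, $g_u$ and the finitely many cosets $s^j\Ze[\Le_g]$ for $0\le j<N$ yields~\ref{prop:Pgd}. The main obstacle is the identification $\Pa_s=\Pa(\chi)$: one must handle the possibly non-reduced root system (the interplay between $\Un_\alpha$ and $\Un_{2\alpha}$), verify that every element of $\Pa_s$ decomposes along root subgroups in a way compatible with boundedness of orbits, and check that the rationality of $\chi$ over $\F$ (rather than merely over some extension) is preserved throughout the passage to $\Salg_0$.
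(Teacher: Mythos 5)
Your reduction to the semisimple part and your construction of the cocharacter from the homomorphism $\alpha\mapsto v(\alpha(s))$ are sound, and in characteristic zero your route --- working with the maximal $\F$-split subtorus $\Salg_0$ of an $\F$-rational maximal torus through $g_s$, so that $\chi$ is automatically an $\F$-cocharacter --- is a legitimate and somewhat more direct alternative to the paper's argument, which instead passes to a splitting field $\tilde{\F}$ of a maximal torus containing $g_s$, identifies $\tilde{\Pa}_g$ there with the $\tilde{\F}$-points of a standard parabolic, and only afterwards descends the cocharacter to $\F$.

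The genuine gap is at the opening of your second paragraph, and it is exactly the case the paper is written to cover: $\F$ may have positive characteristic, hence be imperfect, and then the semisimple part $g_s$ of $g\in\Galg(\F)$ need not be an $\F$-point (in general it is only defined over a purely inseparable extension). Consequently there need not exist an $\F$-rational maximal torus $\Talg$ containing $g_s$, the split/anisotropic decomposition of $\Talg$ over $\F$ is unavailable, and your $\chi$ is a priori defined only over some finite extension $\F'$. Your parenthetical ``descending by Galois-invariance'' cannot repair this, because the obstruction is inseparable rather than Galois: invariance of a cocharacter under $\Aut(\F'/\F)$ only places it over the fixed field, which may still be a nontrivial purely inseparable extension of $\F$. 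The paper's key step, absent from your proposal, sits precisely here: it averages $\tilde{\chi}$ over $\Gamma=\Aut(\tilde{\F}/\F)$ (using that $\tilde{\Pa}_g$ is $\Gamma$-stable and that the solutions of the defining inequalities form a cone), obtains a cocharacter over $\tilde{\F}^\Gamma$, and then uses that $\tilde{\F}^\Gamma/\F$ is purely inseparable, so that a suitable positive multiple is defined over $\F$. Without some such descent, $\Pa_g=\Pa(\chi)$ only exhibits $\Pa_g$ as the intersection with $\G$ of a parabolic defined over $\F'$, which is not yet statement \ref{prop:Pga}. Granting \ref{prop:Pga}, your treatment of \ref{prop:Pgb}--\ref{prop:Pgd} matches the paper's in substance (your \ref{prop:Pgd} via the valuation profile of $\chi(\unif)$ is a pleasant variant of the paper's eigenvalue argument), up to the harmless correction that the bounded factor $t$ lies in the maximal bounded subgroup of $\Talg(\F)$ rather than in $\Talg_a(\F)$ itself.
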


\begin{proof}
  We first establish~(a).  Clearly, \(\Pa_g\) is a subgroup of~\(\G\) that contains~\(g\).  The difficulty is to show that~\(\Pa_g\) is an algebraic subgroup of~\(\G\), although it is defined in topological terms.  Choose a finite extension field~\(\F_g\) of~\(\F\) which contains the roots of the characteristic polynomial of~\(g\).  Then we have a Jordan decomposition \(g = g_s g_u = g_u g_s\) in \(\Galg(\F_g )\), see \cite{Springer:Linear_algebraic_groups}*{Section 2.4}.  Let~\(\Talg\) be a maximal torus in~\(\Galg\) defined over~\(\F_g\) that contains~\(g_s\), and let~\(\tilde{\F}\) be a finite extension field of~\(\F_g\) over which~\(\Talg\) splits (Proposition~\ref{prop:splitG}).  We may and will assume that~\(\tilde{\F}\) is normal over~\(\F\).  According to \cite{Serre:Local_fields}*{Section I.4} the valuation~\(v\) extends to a valuation~\(\tilde v\) on~\(\tilde{\F}\).  We abbreviate \(\Galg(\tilde{\F}) = \tilde{\G}\), and similarly for its algebraic subgroups.  Let~\(\tilde{\Phi}\) be the root system of~\(\Galg\) with respect to~\(\Talg\).

  Since~\(g_u\) is unipotent, \(\tilde \CG \defeq \{g_u^n : n \in \Z\}\) is a bounded subgroup of~\(\tilde \G\), and it centralises~\(g_s\).  For \(\alpha \in \tilde{\Phi}\) and \(p \in \tilde \Un_\alpha \setminus \{1\}\), the following are equivalent:
  \begin{itemize}
  \item \(\{ g^n p g^{-n} : n \in \N \}\) is bounded,
  \item \(\tilde \CG \{ g_s^n p g_s^{-n} : n \in \N \} \tilde \CG\) is bounded,
  \item \(\{ g_s^n p g_s^{-n} : n \in \N \}\) is bounded,
  \item \(g_s p g_s^{-1} = \lambda p\) with \(\{ \lambda^n : n \in \N \} \subseteq \tilde{\F}\) bounded,
  \item \(\tilde{v} \bigl(\alpha (g_s)\bigr) \ge 0\).
  \end{itemize}
  We may choose a system of positive roots~\(\tilde{\Phi}^+\) with \(\tilde{v} \bigl(\alpha (g_s)\bigr) \ge 0\) for all \(\alpha\in\tilde{\Phi}^+\).  Let \(D\subseteq\tilde{\Delta}\) be the set of simple roots with \(\tilde{v}\bigl(\alpha(g_s)\bigr)=0\).  The group~\(\tilde{\Pa}_g\) is generated by \(\tilde{\T}\defeq \Talg(\tilde{\F})\) and all~\(\tilde{\Un}_\alpha\) with \(\alpha\in\tilde{\Phi}^+\cup \Phi_D\).  Thus~\(\tilde{\Pa}_g\) is the group of \(\tilde{\F}\)\nb-points of the parabolic subgroup~\(\Palg_D\) of~\(\Galg\), and the collection of non-zero weights of~\(\tilde{\T}\) in \(\Lie[\tilde{\F}] (\Palg_D)\) equals
  \begin{equation}
    \label{eq:PhiPg}
    \bigl\{\alpha\in\tilde{\Phi} :
    \tilde{v} \bigl(\alpha(g_s)\bigr) \ge 0\bigr\}
    \eqdef \Phi(\Palg_g,\Talg).
  \end{equation}

  As mentioned above, \(\tilde{\Pa}_g\) is also a pseudo-parabolic subgroup of~\(\tilde{\G}\), so there is a cocharacter \(\tilde{\chi} \in X_*(\Galg)\) with \(\tilde{\Pa}_g = \tilde{\Pa}(\tilde{\chi})\).  In fact, any \(\tilde{\chi} \in X_*(\tilde{\Talg})\) with
  \begin{equation}
    \label{eq:2.2}
    \{ \alpha \in \tilde{\Phi} :
    \inp{\alpha}{\tilde \chi} \ge 0 \}
    = \Phi(\Palg_g ,\Talg)
  \end{equation}
  will do.  To prove that \(\Pa_g = \tilde{\Pa}_g \cap \G\) is a parabolic subgroup of~\(\G\), we must find a cocharacter~\(\chi\) that satisfies~\eqref{eq:2.2} and is defined over~\(\F\).  Then \(\Pa_g = \Pa(\chi)\) will be pseudo-parabolic and hence parabolic.

  Let~\(\Gamma\) be the group of field automorphisms of~\(\tilde{\F}\) over~\(\F\).  Since \(g\in \Galg(\F)\) and~\(\Gamma\) acts continuously, the subgroup~\(\tilde{\Pa}_g\) is \(\Gamma\)\nb-invariant by~\eqref{eq:Pg}, so that \(\gamma \circ \tilde{\chi} \circ \gamma^{-1}\) satisfies~\eqref{eq:2.2} for all \(\gamma \in \Gamma\).  Since the set of solutions of~\eqref{eq:2.2} forms a cone in the free abelian group \(X_*(\tilde{\Talg})\), it contains
  \[
  \tilde{\chi}^\Gamma : \lambda \mapsto \prod_{\gamma \in \Gamma}
  \gamma \bigl( \tilde{\chi} ( \gamma^{-1} \lambda ) \bigr).
  \]
  Thus \(\tilde{\Pa}_g = \tilde{\Pa}(\tilde{\chi}^\Gamma)\).  The cocharacter~\(\tilde{\chi}^\Gamma\) is defined over~\(\tilde{\F}^\Gamma\).  The field extension \(\F \subseteq \tilde{\F}^\Gamma\) is finite and purely inseparable, see for example \cite{Lang:Algebra}*{Section 7.7}.  Hence some positive multiple~\(\chi\) of \(\tilde{\chi}^\Gamma\) is defined over~\(\F\) and still satisfies~\eqref{eq:2.2}.  This yields \(\tilde{\Pa}_g = \tilde{\Pa}(\chi)\) and finishes the proof of~(a).

  Now we prove~(b).  \(\Lie[\tilde{\F}](\Palg_g)\) is spanned by the vectors \(X \in \Lie[\tilde{\F}](\Galg)\) with \(\Ad(g_s) X = \lambda X\) with \(\tilde{v}(\lambda) \ge 0\).  Similarly, \(\Lie[\tilde{\F}]\bigl(\Ralg[\Palg_g]\bigr)\) is spanned by the root subspaces \(\Lie[\tilde{\F}](\Ualg_\alpha)\) with \(\alpha \in \Phi(\Palg_g,\Talg)\) but \(-\alpha \notin \Phi(\Palg_g,\Talg)\).  These are precisely the \(\alpha\in\Phi\) with \(\tilde{v} \bigl(\alpha(g_s)\bigr) > 0\).  Therefore
  \[
  \lim_{n \to \infty} g_s^n h g_s^{-n} = 1
  \quad\iff\quad h \in \Ru[\tilde{\Pa}_g].
  \]
  Since all powers of~\(g_u\) are contained in the bounded subgroup~\(\tilde{\CG}\), these statements are also equivalent to \(\lim_{n \to \infty} g^n h g^{-n} = 1\).  Now~(b) follows because \(\Ru[\Pa_g] = \Ru[\tilde\Pa_g] \cap \Pa_g\).

  Next we establish~(c).  Let~\(\chi\) be a cocharacter of~\(\Galg\) defined over~\(\F\) with \(\Pa_g = \Pa(\chi)\).  The same reasoning as in the proof of~(a) shows that \(\Pa_{g^{-1}} = \Pa(-\chi)\).  The assertion~\ref{prop:Pgc} now follows by applying \cite{Springer:Linear_algebraic_groups}*{Theorem 13.4.2} to \(\Pa_g\) and~\(\Pa_{g^{-1}}\).

  Finally, we turn to~(d).  The eigenvalues of \(\Ad(g_s)\) acting on \(\Lie[\tilde{\F}](\Lalg_g)\) all have valuation~\(0\).  Hence \(\Ad(g)\) lies in a bounded subgroup of the adjoint group of~\(\tilde{\Le}_g\).  Equivalently, the image of~\(g\) in \(\tilde{\Le}_g \mathbin/ \Ze[\tilde{\Le}_g]\) generates a bounded subgroup.  Finally, we note that \(\Le_g \mathbin/ \Ze[\Le_g]\) can be identified with a subgroup of \(\tilde{\Le}_g \mathbin/ \Ze[\tilde{\Le}_g]\).
\end{proof}

\section{Some Bruhat--Tits theory}
\label{sec:BruhatTits}

We keep the notation from Section~\ref{sec:reductive}.  Let~\(\F\) be a non-Archimedean local field with a discrete valuation~\(v\).  We normalise~\(v\) by \(v(\F^\times) = \Z\).  Let \(\integ \subset \F\) be the ring of integers and \(\maxid \subset \integ\) its maximal ideal.  The cardinality~\(q\) of the residue field \(\integ / \maxid\) is a power of a prime number~\(p\).  We briefly call~\(\F\) a \(p\)\nb-adic field.

Bruhat and Tits~\cites{Bruhat-Tits:Reductifs_I, Bruhat-Tits:Reductifs_II, Tits:Reductive} constructed an affine building for any reductive \(p\)\nb-adic group \(\G = \Galg(\F)\).  More precisely, they constructed two buildings, one corresponding to~\(\G\) and one corresponding to the maximal semisimple quotient of~\(\G\).  We call the latter the Bruhat--Tits building of~\(\G\) and denote it by~\(\bt{\F}\).  Relying on \cite{Schneider-Stuhler:Rep_sheaves}*{\S\,1.1} and \cite{Vigneras:Cohomology}*{Section~1}, we now recall its construction.  The main ingredients are certain subgroups~\(\Un_{\alpha,r}\) and~\(H_r\) of~\(\G\).

\subsection{The prolonged valuated root datum}
\label{sec:prolonged_root}

Let \(\inp{\cdot}{\cdot}\colon X_*(\Salg) \times X^*(\Salg) \to \Z\) be the canonical pairing.  There is a unique group homomorphism
\[
\nu\colon \Ce(\ST) \to X_*(\Salg) \otimes_\Z \R
\]
such that \(\inp{\nu (z)}{\chi |_\ST} = - v (\chi (z))\) for all \(\chi \in X^*\bigl(\Ce(\ST)\bigr)\).  Let
\[
H \defeq \ker (\nu) = \bigl\{ z \in \Ce(\ST) :
\text{\(v(\chi (z)) = 0\)  for all \(\chi \in X^*(\Ce(\ST))\)} \bigr\}.
\]
be the maximal compact subgroup of~\(\Ce(\ST)\).

Bruhat and Tits~\cite{Bruhat-Tits:Reductifs_II} defined discrete decreasing filtrations of \(H\) and~\(\Un_\alpha\) by compact open subgroups \(H_r\) and~\(\Un_{\alpha,r}\), respectively.  These groups satisfy the properties of a ``prolonged valuated root datum'' \cite{Bruhat-Tits:Reductifs_I}*{\S\,6.2}.  We first describe these subgroups in the special case where~\(\Galg\) splits over~\(\F\).  Then each~\(\Un_\alpha\) is a one-dimensional vector space over~\(\F\), and a Chevalley basis of \(\Lie(\Galg)\) gives rise to an isomorphism \(\Un_\alpha \cong \F\).  Chevalley bases are known to exist but they are not unique.  We fix one, and we use suitable subsets as bases of \(\Lie(\Palg_D)\) and \(\Lie(\Lalg_D)\), for any standard parabolic subgroup~\(\Pa_D\) with Levi factor~\(\Le_D\).  Thus~\(\Un_\alpha\) is endowed with a discrete valuation~\(v_\alpha\) and one defines
\begin{equation}
  \label{eq:Usplit}
  \Un_{\alpha ,r} \defeq v_\alpha^{-1} ([r,\infty]) \qquad \text{for } r \in \R.
\end{equation}
By assumption, the maximal split torus is a maximal torus, that is, \(\Salg = \Calg(\Salg)\).  For \(r<0\) we may put \(H_r = H\), but~\(H_0\) is more difficult to define.  According to \cite{Bruhat-Tits:Reductifs_II}*{5.2.1} there is a canonical smooth affine \(\integ\)\nb-group scheme~\(\Zinteg\) such that \(\Zinteg(\F) = \Ce(\ST)\).  Let~\(\Zinteg_c\) be the neutral component of~\(\Zinteg\) and put \(H_0 \defeq \Zinteg_c (\integ)\).  The inclusions
\[
H_0 \subseteq \Zinteg(\integ) \subseteq H
\]
are all of finite index.  We define
\begin{equation}
  \label{eq:Hsplit}
  H_r \defeq \bigl\{ z \in H_0 :
  \text{\(v (\chi (z) - 1) \ge r\) for all \(\chi \in X^*(\Calg(\Salg))\)} \bigr\}
\end{equation}
for \(r>0\) as in \cite{Schneider-Stuhler:Rep_sheaves}*{Proposition I.2.6}.

Now we extend the above construction to a non-split group~\(\G\).  Proposition~\ref{prop:splitG} provides
a finite Galois extension~\(\tilde{\F}\) of~\(\F\) over which~\(\Galg\) splits.  The strategy of descent
is explained in \cite{Bruhat-Tits:Reductifs_I}*{Chapitre 9}; the basic idea is to construct the required
groups first in \(\Galg(\tilde{\F})\) and then to intersect them with \(\Galg(\F)\).  This does not work
as such because the root system of \(\Galg(\tilde{\F})\) is usually larger than that of \(\Galg(\F)\),
so that must be taken into account as well.  Bruhat and Tits descend in two steps: first from split to
quasi-split, then from there to the general case.  This is, in all probability, necessary for the proof,
but the conclusions can be written down in one step.  Of course it is by no means obvious that the
groups we will define below form a (prolonged) valuated root datum: proving this is precisely what
most of the work in~\cite{Bruhat-Tits:Reductifs_II} is dedicated to.

If~\(X\) is any object constructed over~\(\F\), then we will denote the corresponding object over~\(\tilde{\F}\) by~\(\tilde{X}\).  According to \cite{Serre:Local_fields}*{Proposition I.2.3} \(\tilde{\F}\) is also a local field, and there is a unique discrete valuation \(\tilde{v}\colon \tilde{\F} \to \Q \cup \{\infty\}\) that extends~\(v\).  By definition,
\[
\tilde{v} (\tilde{\F}^\times ) = \eF^{-1} \Z,
\]
where \(\eF \in \N\) is the ramification index of~\(\tilde{\F}\) over~\(\F\).  The constructions above still work for this non-normalised valuation~\(\tilde{v}\).

Let \(\tilde{\ST} \subseteq \Galg(\tilde{\F})\) be a maximal \(\tilde \F\)\nb-split torus that contains \(\Salg(\tilde{\F})\).  Since \(\tilde{\ST} \supseteq \ST\), restriction of characters defines a surjection
\begin{equation}
  \label{eq:rhoS}
  \rho_\ST\colon \tilde \Phi\cup \{0\} \to \Phi\cup \{ 0 \}.
\end{equation}
For \(\alpha \in \Phr\) and \(r \in \R\) the descent \cite{Bruhat-Tits:Reductifs_II}*{4.2.2 and 5.1.16} boils down to
\begin{equation}
  \label{eq:Ualphar}
  \begin{aligned}
    \Un_{\alpha ,r} &\defeq \Un_\alpha \cap \biggl( \prod_{\beta \in \rho_\ST^{-1}\{\alpha \}}
      \tilde{\Un}_{\beta ,r} \times \prod_{\beta \in \rho_\ST^{-1} \{2 \alpha \}}
      \tilde{\Un}_{\beta ,2r} \biggr) , \\
    \Un_{2 \alpha ,r} &\defeq \Un_{2 \alpha} \cap \Un_{\alpha ,r/2}.
  \end{aligned}
\end{equation}
These groups do not depend on the chosen ordering of the factors.  For a standard Levi subgroup \(\Le_D \subseteq \G\) and \(\alpha \in \Phi_D\), our consistent choice of Chevalley bases ensures that it does not matter whether we consider the groups~\(\Un_{\alpha,r}\) in \(\G\) or~\(\Le_D\).

We can use~\eqref{eq:Ualphar} to define a valuation on~\(\Un_\alpha\) by
\begin{equation}
  \label{eq:valpha}
  v_\alpha(u_\alpha) \defeq \sup {}\{ r \in \R : u_\alpha \in \Un_{\alpha,r}\}.
\end{equation}
Clearly this reproduces~\eqref{eq:Usplit} in the split case.
Let~\(\Gamma_\alpha\) be the set of \(r \in \R\) at which~\(\Un_{\alpha,r}\) jumps, or equivalently
the set of values of~\(v_\alpha\) (except \(v_\alpha (1) = \infty\)).  By construction,
\(\tilde{\Gamma}_\beta = \eF^{-1} \Z\) for all \(\beta \in \tilde \Phi\), which implies
\[
\Z \subseteq \Gamma_\alpha \subseteq \eF^{-1} \Z
\qquad \text{for all } \alpha \in \Phi.
\]
More precisely, \cite{Bruhat-Tits:Reductifs_I}*{6.2.23} and \cite{Schneider-Stuhler:Rep_sheaves}*{Lemma I.2.10} yield \(n_\alpha \in \N\) for \(\alpha \in \Phi\) with the following properties:
\begin{itemize}
\item \(\Gamma_\alpha = n_\alpha^{-1} \Z\);
\item \(n_{w \alpha} = n_\alpha\) for \(w \in W (\Phi)\);
\item \(n_{2 \alpha} = n_{\alpha}\) or \(n_{2 \alpha} = n_{\alpha} / 2\)
whenever \(\alpha, 2\alpha \in \Phi\).
\end{itemize}
Similar to \eqref{eq:Ualphar} one defines for \(r \in \R\)
(see \cite{Schneider-Stuhler:Rep_sheaves}*{I.2.6} and \cite{Vigneras:Cohomology}*{Section 1}) :
\begin{equation}
  \label{eq:Hr}
  H_r \defeq \Ce(\ST) \cap
  \biggl( \tilde{H}_r \times \prod_{\beta \in \rho_\ST^{-1}\{ 0 \}}
  \tilde{\Un}_{\beta ,r} \biggr).
\end{equation}
A particularly useful property of the above groups, which holds more or less by the definition of a 
prolonged valuated root datum \cite{Bruhat-Tits:Reductifs_I}*{Proposition 6.4.41}, is as follows.
Let \(\alpha,\beta \in \Phi \cup \{ 0 \}\) and let \(r,s \in \R\), with \(r \geq 0\) if \(\alpha = 0\)
and \(s \geq 0\) if \(\beta = 0\).  Then
\begin{equation}
  \label{eq:CommutatorGroups}
  [\Un_{\alpha,r},\Un_{\beta,s}] \subseteq \text{subgroup generated by }
  \bigcup_{n,m \in \Z_{>0}} \Un_{n \alpha + m \beta, nr+ms},
\end{equation}
where \(\Un_{0,t} = H_t\) and \(\Un_{\delta,t} = \{1\}\) if \(\delta \notin \Phi \cup \{0\}\).
We will need an iterated version of this, which must have been known already to Bruhat and Tits, but
for which the authors did not find a reference.

\begin{lemma} \label{lem:multipleCommutators}
Let $\alpha_i \in \Phi^+ \cup \{0\}, r_i \in \R$ and $u_i \in \Un_{\alpha_i ,r_i}$ for $i=1,2,\cdots,n$. 
Assume that $r_i \geq 0$ whenever $\alpha_i = 0$. Then
\[
[u_1,[u_2,[\cdots [u_{n-1},u_n] \cdots ]]]
\] 
lies in the group generated by the $\Un_{\sum_{i=1}^n k_i \alpha_i,
\sum_{i=1}^n k_i r_i}$, where the $k_i$ run over $\Z_{>0}$.
\end{lemma}
\begin{proof}
Let us call the group in question $K$. Suppose that $y_j \in \Un_{\sum_{i=2}^n k_i \alpha_i,
\sum_{i=2}^n k_i r_i}$ for some $k_i \in \Z_{>0}$ (depending on $j$). Notice that
$\sum_{i=2}^n k_i \alpha_i$ cannot be a negative root, and that $\sum_{i=2}^n k_i r_i \geq 0$
if $\sum_{i=2}^n k_i \alpha_i = 0$.
We will show by induction on $l \in \N$ that 
\[
[u_1, y_1 \cdots y_l] \text{ is an element of } K.
\]
For $l = 1$ this is \eqref{eq:CommutatorGroups}. For $l \geq 2$ we can rewrite it as
\[
[u_1, y_1 \cdots y_l] = u_1 y_1 u_1^{-1} [u_1, y_2 \cdots y_l ] y_1^{-1} = [u_1,y_1] y_1 [u_1,y_2 \cdots y_l] y_1^{-1} .
\]
By the induction hypothesis all terms on the right are in $K$.

For the actual lemma we use another induction, with respect to $n$. The case $n=1$ is trivial. For $n>1$, 
the induction hypothesis provides $y_j$ as above, such that
\[
[u_1,[u_2,[\cdots [u_{n-1},u_n] \cdots ]]] = [u_1, y_1 \cdots y_l] ,
\]
which by the above lies in $K$.
\end{proof}

\subsection{The affine Bruhat--Tits building}
\label{sec:def_building}

The image of any cocharacter \(\F^\times \to \Zc\) lies in \(\ST_\Delta \subseteq \ST\), the maximal \(\F\)\nb-split torus in~\(\Zc\).  Hence \(X_*(\Zc) = X_*(\ST_\Delta)\).  The standard \emph{apartment} is
\[
\Apa \defeq \bigl( X_*(\ST) / X_*(\Zc) \bigr) \otimes_\Z \R =
\bigl( X_*(\Salg) / X_*(\Salg_\Delta) \bigr) \otimes_\Z \R.
\]
The affine Bruhat--Tits building \(\bt{\F}\) will be defined as \(\G \times \Apa \mathbin/ {\sim}\) for a suitable equivalence relation~\(\sim\).

Let \(\inp{\cdot}{\cdot}_{\Apa}\) be a \(W(\Phi)\)-invariant inner product on~\(\Apa\).  Then the different irreducible components~\(\Phi_i^\vee\) of~\(\Phi^\vee\) are orthogonal and on~\(\R \Phi_i^\vee\) the inner product is unique up to scaling.  Thus we may assume that \(\inp{\alpha^\vee}{\alpha^\vee}_{\Apa} = 1\) for all short coroots \(\alpha^\vee \in \Phi^\vee\).

The centraliser~\(\Ce(\ST)\) acts on~\(\Apa\) by
\[
g \cdot x = x + \nu (g).
\]
This extends to an action of~\(\No(\ST)\) on~\(\Apa\) by affine automorphisms, such that the linear part of \(x \mapsto g \cdot x\) is given by the image of \(g \in \No(\ST)\) in \(W(\Phi)\).  In particular, the action of~\(g\) on~\(\Apa\) is a translation if and only if \(g \in \Ce(\ST)\).  The affine hyperplanes
\begin{equation}
  \Apa[\ST,\alpha ,k] \defeq \{ x \in \Apa : \inp{x}{\alpha} = k \} \qquad
  \text{for \(\alpha \in \Phi\) and \(k \in \Gamma_\alpha\)}
\end{equation}
turn~\(\Apa\) into a polysimplicial complex.  The open polysimplices are called facets, that is, a \emph{facet} in~\(\Apa\) is a non-empty subset \(F \subseteq \Apa\) such that
\begin{itemize}
\item \(F \subseteq \Apa[\ST,\alpha ,k] \) or~\(F\) lies entirely on one side of~\(\Apa[\ST,\alpha ,k] \) for all \(\alpha \in \Phi\) and \(k \in \Gamma_\alpha\);
\item \(F\) cannot be extended to a larger set with the first property.
\end{itemize}
Thus the closure of a facet is a polysimplex, and a facet is closed if and only if it is a single point.  Moreover, a facet is open in~\(\Apa\) if and only if it is of maximal dimension, in which case we call it a \emph{chamber}.

The affine action of~\(\No(\ST)\) on~\(\Apa\) respects the polysimplicial structure.  In fact, \(\No(\ST)\) is generated by the translations coming from~\(\Ce(\ST)\) and the reflections in the hyperplanes~\(\Apa[\ST,\alpha,k]\):
\[
x \mapsto x + (k - \inp{x}{\alpha}) \alpha^\vee \qquad
\alpha \in \Phi, k \in \Gamma_\alpha,
\]
where \(\alpha^\vee \in \Phi^\vee\) is the coroot corresponding to~\(\alpha\).

For a non-empty subset \(\Omega \subseteq \Apa\) we define
\begin{equation}
  \label{eq:fOmega}
  f_\Omega\colon \Phi\to \R \cup \{\infty\},\qquad
  f_\Omega (\alpha ) \defeq - \inf_{x \in \Omega} {}\inp{x}{\alpha}
  = \sup_{x \in \Omega} {}\inp{x}{-\alpha}.
\end{equation}
This gives rise to the following subgroups of~\(\G\):
\begin{equation}
  \label{eq:POmega}
  \begin{aligned}
    \UC &\defeq \text{subgroup generated by } \bigcup\nolimits_{\alpha \in \Phr}
    \Un_{\alpha, f_\Omega (\alpha)} ,\\
    \NC &\defeq \{ n \in \No(\ST) : \text{\(n \cdot x = x\) for all \(x \in \Omega\)} \} ,\\
    \PC &\defeq \NC \UC = \UC \NC.
  \end{aligned}
\end{equation}
The latter is a group because \(n \UC n^{-1} = \UC[n \Omega]\) for all \(n \in \No(\ST)\).  For \(\Omega = \{x\}\) we abbreviate \(\UC = \UC[x]\), which should not be confused with the root subgroups~\(\Un_\alpha\).

Given a partition \(\Phi= \Phi^+ \cup \Phi^-\) of \(\Phi(\Galg,\Salg)\) in positive and negative roots, we let~\(\Un^\pm\) be the subgroup of~\(\G\) generated by \(\bigcup_{\alpha \in \Phi^\pm} \Un_\alpha\).  We write
\[
\UC^+ \defeq \UC \cap \Un^+ \quad \text{and} \quad
\UC^- \defeq \UC \cap \Un^-.
\]

\begin{proposition}[\cite{Bruhat-Tits:Reductifs_I}*{6.4.9}]
  \label{prop:decompose}
  These subgroups have the following properties:
  \begin{enumerate}[label=\textup{(\alph{*})}]
  \item\label{prop:decomposea} \(\UC \cap \Un_\alpha = \Un_{\alpha, f_\Omega (\alpha)}\) for all \(\alpha \in \Phi\).
  \item\label{prop:decomposeb} The product map
    \[
    \prod_{\alpha \in \Phr \cap \Phi^\pm} \Un_{\alpha, f_\Omega (\alpha)} \to \UC^\pm
    \]
    is an isomorphism of algebraic varieties, for any ordering of the factors.
  \item\label{prop:decomposec} \(\UC = \UC^+ \UC^- \bigl( \UC \cap \No(\ST) \bigr)\).
  \end{enumerate}
\end{proposition}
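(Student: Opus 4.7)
The plan is to deduce these three statements from the axioms of the prolonged valuated root datum, with the commutator inclusion~\eqref{eq:CommutatorGroups} as the main workhorse. The key geometric input is the concavity of~$f_\Omega$: for $n, m \in \Z_{>0}$ and $n\alpha + m\beta \in \Phi$,
\[
f_\Omega(n\alpha + m\beta) \le n f_\Omega(\alpha) + m f_\Omega(\beta),
\]
directly from~\eqref{eq:fOmega}, because $\inp{x}{n\alpha+m\beta}$ is linear in the root argument. Consequently every commutator of generators of~$\UC$ produced by~\eqref{eq:CommutatorGroups} stays inside~$\UC$.

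I would treat~(b) first. Fix a linear ordering of $\Phr \cap \Phi^+$ and let $\psi$ denote the proposed product map. Injectivity of $\psi$ is inherited from the classical isomorphism $\prod_{\alpha \in \Phr \cap \Phi^+} \Un_\alpha \congto \Un^+$ of algebraic varieties, the standard decomposition of the unipotent radical of a Borel subgroup in a connected reductive group. For surjectivity I would show by induction on word length that any word $u_{\gamma_1} \cdots u_{\gamma_N}$ in generators of~$\UC^+$ can be put into the prescribed order: each transposition $u_\beta u_\alpha = u_\alpha u_\beta \cdot [u_\beta, u_\alpha]^{-1}$ introduces a correction in the subgroup generated by $\Un_{n\alpha+m\beta,\, f_\Omega(n\alpha+m\beta)}$ for $n, m > 0$, by~\eqref{eq:CommutatorGroups} and the concavity bound. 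Since $\alpha, \beta \in \Phi^+$, the corrections involve only positive roots, and in fact roots of strictly greater height, so the rewriting procedure terminates.

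Part~(a) then drops out of~(b): if $\alpha \in \Phr \cap \Phi^+$ and $u \in \UC \cap \Un_\alpha$, then $u \in \UC^+$, and the uniqueness in~(b), combined with the injectivity of the ambient product $\prod_\beta \Un_\beta \to \Un^+$, forces all factors other than the $\Un_\alpha$-factor in the decomposition of~$u$ to be trivial. The non-reduced case $\alpha = 2\beta \notin \Phr$ reduces to the reduced one via $\Un_{2\beta, f_\Omega(2\beta)} = \Un_{2\beta} \cap \Un_{\beta, f_\Omega(\beta)}$, valid because $f_\Omega(2\beta) = 2 f_\Omega(\beta)$.

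For~(c), I would show that the set $\UC^+ \UC^- (\UC \cap \No(\ST))$ is closed under left multiplication by each generator $u_\gamma \in \Un_{\gamma, f_\Omega(\gamma)}$ with $\gamma \in \Phr$. Moving $u_\gamma$ past the $\UC^\mp$- and $\No(\ST)$-factors is handled by~\eqref{eq:CommutatorGroups} and the $\mathrm{SL}_2$-relations inside the rank-one subgroup $\gen{\Un_\gamma, \Un_{-\gamma}}$, which rewrite certain products $u_\gamma u_{-\gamma}$ as $u_{-\gamma}' \cdot z \cdot u_\gamma'$ with $z \in \Ce(\ST)$ or as a Weyl reflection in $\No(\ST)$. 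This is where the commutator relation with $n\alpha + m\beta = 0$ enters and produces factors in $H_t$ with $t \ge f_\Omega(\gamma) + f_\Omega(-\gamma)$; the concavity of $f_\Omega$ is used to verify that these $H_t$-factors and the Weyl reflections that appear actually fix~$\Omega$, hence belong to $\UC \cap \No(\ST)$. I expect this last verification to be the main obstacle: precisely tracking which $\mathrm{SL}_2$-rearrangements yield elements of $H_t$ versus genuine Weyl reflections, and confirming that the latter fix~$\Omega$, is delicate, and all the more so when non-reduced roots are present, since the refined definition~\eqref{eq:Ualphar} of $\Un_{2\alpha,r}$ is needed to keep the rank-one computations consistent with the values of~$f_\Omega$.
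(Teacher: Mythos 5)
The paper does not prove this proposition at all: it is quoted verbatim from Bruhat--Tits, \cite{Bruhat-Tits:Reductifs_I}*{6.4.9}, so there is no in-paper argument to compare against. Your sketch correctly identifies the two essential ingredients of the Bruhat--Tits proof, namely the concavity of \(f_\Omega\) combined with the commutator relation~\eqref{eq:CommutatorGroups}, and the rank-one (\(\mathrm{SL}_2\)) relations; but as a proof it has a genuine structural gap.

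The gap is in the surjectivity step of~(b). You rewrite ``any word \(u_{\gamma_1}\cdots u_{\gamma_N}\) in generators of \(\UC^+\)'' into the prescribed order. But \(\UC^+\) is \emph{defined} as \(\UC\cap\Un^+\), not as the subgroup generated by the \(\Un_{\alpha,f_\Omega(\alpha)}\) with \(\alpha\in\Phi^+\). An element of \(\UC\cap\Un^+\) is a word in generators attached to roots of \emph{both} signs that happens to land in \(\Un^+\); your reordering argument only shows that the product map surjects onto the subgroup generated by the positive-root generators. Identifying that subgroup with \(\UC\cap\Un^+\) is precisely the hard content of 6.4.9, and it cannot be done before~(c): one needs the decomposition \(\UC=\UC^+\UC^-(\UC\cap\No(\ST))\) together with the uniqueness of the ambient Birkhoff-type factorisation \(\Un^+\cdot\Un^-\cdot\No(\ST)\) to conclude that an element of \(\UC\cap\Un^+\) has trivial \(\Un^-\)- and \(\No(\ST)\)-components and hence lies in the group generated by positive generators. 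So (b) and~(c) are entangled, and your (a) inherits the gap since you derive it from~(b). Moreover, the rank-one verification you defer in~(c) is not reachable from~\eqref{eq:CommutatorGroups} alone: controlling the valuations of \(u'_{-\gamma}\), \(u''_{-\gamma}\) in a relation \(u'_{-\gamma}\,m(u_\gamma)\,u''_{-\gamma}=u_\gamma\) requires the valuation axioms of the valuated root datum (Bruhat--Tits' (V5)/(V6)), which tie \(v_{-\gamma}\) of the outer factors to \(v_\gamma(u_\gamma)\) and thereby locate the wall fixed by the reflection \(m(u_\gamma)\). Without that input the stability of \(\UC^+\UC^-(\UC\cap\No(\ST))\) under left multiplication by generators cannot be checked. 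Given the depth of the full argument, citing \cite{Bruhat-Tits:Reductifs_I}*{6.4.9}, as the paper does, is the appropriate course.
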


We define an equivalence relation~\(\sim\) on \(\G \times \Apa\) by
\[
(g,x) \sim (h,y)  \iff
\text{there is \(n \in \No(\ST)\) with \(n x = y\)  and \(g^{-1} h n \in \UC[x]\).}
\]
As announced, the Bruhat--Tits building of~\(\G\) is
\[
\bt{\F} = \G \times \Apa \mathbin/ {\sim}.
\]
The group~\(\G\) acts naturally on~\(\bt{\F}\) from the left, and the map
\[
\Apa \to \bt{\F},\qquad x \mapsto (1,x) \mathbin/ {\sim}
\]
is an \(\No(\ST)\)\nb-equivariant embedding.  An apartment of~\(\bt{\F}\) is a subset of the form \(g \cdot \Apa\) with \(g \in \G\), and \(g\cdot \Apa=\Apa\) if and only if \(g\in \No(\ST)\).  Since all maximal split tori of~\(\G\) are conjugate by \cite{Borel-Tits:Groupes_reductifs}*{Th\'eor\`eme 4.21}, there is a bijection between apartments in \(\bt{\F}\) and maximal split tori in~\(\G\).

A facet of~\(\bt{\F}\) is a subset of the form~\(g \cdot F\), where \(g \in \G\) and~\(F\) is a facet of~\(\Apa\).  For a polysimplicial complex~\(\Sigma\), we denote the set of vertices by~\(\Sigma^\circ\) and the set of \(n\)\nb-dimensional polysimplices in~\(\Sigma\) by~\(\Sigma^n\) for \(n\in\N\).

For any subset \(\Omega \subseteq \bt{\F}\), we denote the pointwise stabiliser of~\(\Omega\) by~\(\PC\).  This is consistent with~\eqref{eq:POmega} when \(\Omega \subseteq \Apa\).

\section{Fixed points in the building}
\label{sec:fixedpoints}

An element~\(g\) of~\(\G\) is called \emph{compact} if its image in \(\G / \Ze\) belongs to a compact subgroup of \(\G / \Ze\).  According to the Bruhat--Tits Fixed Point Theorem (see~\cite{Bruhat-Tits:Reductifs_I}*{\S\,3.2}), the compact elements of \(\G\) are precisely those that fix a point in the building \(\bt{\F}\).  In this section, we study how the fixed point subset \(\bt{\F}^\gamma\) depends on~\(\gamma\).

Let~\(H\) be a group of polysimplicial automorphisms of~\(\bt{\F}\).  If \(x,y \in \bt{\F}^H\), then~\(H\) fixes the geodesic segment~\([x,y]\) pointwise by \cite{Bruhat-Tits:Reductifs_I}*{2.5.4}.  Consequently, \(\bt{\F}^H\) is a convex subset of \(\bt{\F}\).  Recall that a \emph{chamber complex} is a polysimplicial complex~\(\Sigma\) such that:
\begin{itemize}
\item all maximal polysimplices of~\(\Sigma\) (the chambers) have the same dimension;
\item given any two chambers \(C_1\) and~\(C_2\) of~\(\Sigma\), there exists a gallery of chambers connecting \(C_1\) and~\(C_2\).
\end{itemize}
If \(g\in\G\) is compact and belongs to a maximal split torus~\(\ST\) of~\(\G\), then there is a chamber in the corresponding apartment~\(\Apa\) that is fixed pointwise by~\(g\).  There exist, however, regular semisimple elements \(\gamma\in\G\) that fix no chamber in the building pointwise.  For such elements the fixed point subcomplex is not necessarily a chamber complex.  But once~\(g\) fixes a chamber, say, because it belongs to a maximal split torus, the fixed point subset is automatically a chamber complex:

\begin{lemma}
  \label{lem:chamberComplex}
  Suppose that~\(H\) fixes a chamber \(C \subseteq \bt{\F}\) pointwise.  Then~\(\bt{\F}^H\) is a chamber complex.
\end{lemma}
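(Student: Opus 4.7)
The plan is to prove three things in succession: first, that $\bt{\F}^H$ is a union of closed facets of~$\bt{\F}$, i.e.\ a subcomplex; second, that every point of $\bt{\F}^H$ lies in the closure of some chamber of~$\bt{\F}$ contained in $\bt{\F}^H$; third, that any two such chambers are joined by a gallery inside $\bt{\F}^H$. Together these exhibit $\bt{\F}^H$ as a chamber complex whose maximal polysimplices are the chambers of $\bt{\F}$ it contains, all of dimension equal to the rank.

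For the first claim, take $x \in \bt{\F}^H$, let $F$ be the facet of $\bt{\F}$ containing~$x$, and choose an apartment $A$ containing both $C$ and $F$ by the standard building axiom. The retraction $\rho\colon \bt{\F} \to A$ centred at~$C$ restricts on every apartment $A'$ containing~$C$ to a polysimplicial isometry $A' \to A$ fixing $A \cap A'$ pointwise. For $h \in H$, the apartment $h \cdot A$ again contains~$C$, so $\rho \circ h$ is a polysimplicial isometry of~$A$ fixing~$C$ pointwise; an isometry of a Euclidean space that fixes an open set must be the identity, so $\rho \circ h = \id_A$. Since $h$ fixes~$x$, it preserves $F$ setwise (facets partition $\bt{\F}$), so $h(y) \in F \subseteq A$ for every $y \in F$; but then $\rho(h(y)) = h(y)$ because $\rho$ fixes $A$ pointwise, and combining with $\rho(h(y)) = y$ forces $h(y) = y$. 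Hence $F \subseteq \bt{\F}^H$.

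Both remaining claims rest on the convexity of $\bt{\F}^H$ recorded just before the lemma together with the first claim. Given $x \in \bt{\F}^H$, pick an apartment $A$ containing both $C$ and~$x$; then $\bt{\F}^H \cap A$ is a convex subset of~$A$, and by the first claim it is a subcomplex. It contains the open chamber~$C$ and so has non-empty interior, and a convex set equals the closure of its interior, so $x$ lies in the closure of the interior, which as a subcomplex is a union of open chambers of~$A$ contained in $\bt{\F}^H$. For the third claim, choose an apartment containing the two given chambers and interior points in sufficiently general position that the connecting geodesic avoids every facet of~$A$ of codimension~$\geq 2$ (possible because such facets form a measure-zero subset); the segment then traverses a gallery $C_0, \dots, C_n$ of chambers of~$A$, and by convexity it lies in $\bt{\F}^H$, so each~$C_i$ meets~$\bt{\F}^H$ and, by the first claim, is contained in it. The principal obstacle is the first claim: the retraction argument that any facet meeting~$\bt{\F}^H$ is pointwise fixed uses crucially that $H$ fixes a top-dimensional chamber and not merely a lower-dimensional facet, and with this in hand the other two claims are formal consequences of convexity and the apartment axioms.
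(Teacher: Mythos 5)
Your proof is correct. Its overall strategy --- convexity of \(\bt{\F}^H\) together with the presence of the top-dimensional fixed chamber \(C\) --- coincides with the paper's, but two sub-steps are handled by different means. The paper obtains the chamber decomposition in one stroke: for \(x\in\bt{\F}^H\), convexity applied inside an apartment containing \(C\) and \(x\) shows that \(H\) fixes an open subset of some chamber \(C_x\) with \(x\in\cl{C_x}\), hence fixes \(C_x\) pointwise. You instead first prove the subcomplex property via the retraction centred at \(C\) (showing \(\rho\circ h=\id_A\), so that any facet meeting \(\bt{\F}^H\) is pointwise fixed); this is a clean, self-contained alternative that does not refer to the dimension of the facet, and only afterwards do you invoke convexity to produce the chamber \(C_x\). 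For gallery-connectedness the paper cites the combinatorial fact that a convex union of closed chambers in an apartment is closed under minimal galleries, whereas you read off a (possibly non-minimal) gallery from a geodesic in general position; both are standard, yours being more elementary while the paper's yields the sharper statement about minimal galleries. Two cosmetic points to tighten: the interior of \(\bt{\F}^H\cap A\) is not literally a union of open chambers (it may contain lower-dimensional facets), only a set in which open chambers are dense; and passing from ``\(x\) lies in the closure of the union of fixed chambers'' to ``\(x\in\cl{C_x}\) for a single fixed chamber'' uses the local finiteness of the complex. Neither affects the correctness of the argument.
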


\begin{proof}
  This is well-known, but we include a proof anyway.  Let \(x \in \bt{\F}^H\) and let~\(\Apa[x]\) be an apartment that contains \(C\) and~\(x\).  Since \(\dim C = \dim \Apa[x]\) and \(\bt{\F}^H\) is convex, it contains an open subset of some chamber \(C_x \subseteq \Apa[x]\) with \(x\in\cl{C_x}\).  Thus~\(H\) fixes~\(C_x\) pointwise and \(\bt{\F}^H\) is the union of all its closed chambers.

  Suppose that~\(\mathcal C\) is any collection of chambers of an apartment~\(\Apa\) of \(\bt{\F}\).  Then \(\bigcup_{C\in\mathcal C} \cl{C}\) is convex if and only if all minimal galleries between elements of~\(\mathcal C\) are contained in~\(\mathcal C\).  Hence \(\bt{\F}^H \cap \Apa\) contains all minimal galleries between its chambers.
\end{proof}

\subsection{The split case}
\label{sec:fixed_split}

Let \(\ST \subset \G\) be a split maximal torus and let \(\gamma \in \ST\) be a compact element.  Then \(v \bigl(\chi(\gamma)\bigr) = 0\) for all \(\chi \in X^* (\ST)\), so that~\(\gamma\) fixes the apartment~\(\Apa\) pointwise.  The subcomplex \(\bt{\F}^\gamma \subseteq \bt{\F}\) is convex and \(\ST\)\nb-invariant.  Its core is formed by the apartment~\(\Apa\) and from there ``hairs'' extend in all directions.  This terminology applies quite well to one-dimensional buildings, but in general such a hair is a (not necessarily bounded) chamber complex.  Since~\(\ST\) acts by translations on~\(\Apa\), it shifts all these hairs.  If \(\gamma \in \ST\) is regular, then \(\bt{\F}^\gamma / \ST \) is compact by \cite{Korman:Character}*{Section 9.1}: the length of the hairs is finite.

Now we study when an arbitrary point \(x \in \bt{\F}\) is fixed by \(\gamma \in \ST\).  Choose a chamber \(C_0 \subseteq \Apa\) and let~\(\rho\) be the retraction of \(\bt{\F}\) to~\(\Apa\) centred at~\(C_0\).  Let~\(\Phi^+\) be a system of positive roots in~\(\Phi\) such that \(f_{\rho(x)}(\alpha) \geq f_{C_0}(\alpha)\) for all \(\alpha \in \Phi^+\); equivalently, \(\Phi^+\) contains all roots with \(f_{\rho(x)}(\alpha) > f_{C_0}(\alpha)\).  Let~\(\Delta\) be the basis of~\(\Phi\) corresponding to~\(\Phi^+\).

Then \(\UC[C_0] \cap \Un_\alpha \subseteq \UC[\rho(x)] \cap \Un_\alpha\) for all \(\alpha \in \Phi_-\), so \(\UC[C_0]^- \subseteq \UC[\rho (x)]^-\).  Furthermore, \(\NC[C_0] = \NC[\rho (x)]\), which together with Proposition~\ref{prop:decompose}.\ref{prop:decomposec} shows that \(\PC[C_0] \subseteq \UC[C_0]^+ \PC[\rho (x)]\).  Since~\(\PC[C_0]\) acts transitively on the set of apartments containing~\(C_0\) by \cite{Bruhat-Tits:Reductifs_I}*{7.4.9}, there is \(u \in \UC[C_0]^+\) with \(x = u \rho (x)\).  Thus we want to know which part of the apartment~\(u \Apa\) is fixed by~\(\gamma\).

By definition, \(u \in \UC[C_0]^+\) fixes all \(y \in \Apa\) satisfying \(-\alpha (y) \leq f_{C_0}(\alpha)\) for all \(\alpha \in \Phi^+\).  These points constitute a cone in \(\Apa \cap u \Apa\), which is fixed by~\(\gamma\).  We are interested in the larger subset \((u \Apa)^\gamma\), which is a convex subcomplex of \(\bt{\F}^\gamma\).  Hence the complex \(Y \defeq u^{-1} (u \Apa)^\gamma\) is convex as well.  Concretely, this means that \(Y \subseteq \Apa\) is determined by a system of equations \(-\alpha (y) \leq r_\alpha\) for certain \(r_\alpha \in \R\), \(\alpha \in \Phi^+\).  We need some notation to make this more explicit.  The \emph{singular depth} of~\(\gamma\) in the direction \(\alpha \in \Phi\) is
\[
\sd_\alpha (\gamma) \defeq v (\alpha (\gamma) - 1).
\]
We also let \(\sd (\gamma) \defeq \max\limits_{\alpha \in
  \Phi^+} \sd_\alpha (\gamma)\).

Recall that the height of a positive root is defined as follows:
\begin{itemize}
\item \(\hgt(\alpha) = 1\) if \(\alpha \in \Phi^+\) is simple;
\item \(\hgt(\alpha + \beta) = \hgt(\alpha) + \hgt(\beta)\) if \(\alpha, \beta, \alpha+\beta \in \Phi^+\).
\end{itemize}
Since~$\hgt$ extends to a group homomorphism $X^* (\G / \Zc)
\to \R$, we may regard it as a point in the apartment~$\Apa$.
Since~\(y\) is contained in the same apartment, this gives
meaning to the linear combination \(y+\sd(\gamma)\hgt\) for
\(y\in Y\) appearing in Proposition~\ref{prop:fixpoints}.(c)
below.

By Proposition~\ref{prop:decompose}.\ref{prop:decomposeb} we can write
\begin{equation}
  \label{eq:prodUalpha}
  u = \prod_{\alpha \in \Phi^+} u_\alpha \qquad\text{with}\quad u_\alpha \in \Un_{\alpha, f_{C_0}(\alpha)}.
\end{equation}

\begin{proposition}
  \label{prop:fixpoints}
  Let \(y \in u^{-1} (u \Apa)^\gamma\).
  \begin{enumerate}[label=\textup{(\alph{*})}]
  \item The compact element \(\gamma \in \ST\) fixes \(x = u \rho (x)\) if and only if \([\gamma,u^{-1}] \in \UC[\rho (x)]^+\).
  \item \(u_\alpha \in \Un_{\alpha, -\alpha (y) - \sd_\alpha (\gamma)} \) for all simple roots \(\alpha \in \Delta\).
  \item \(u \in \Un^+_{y + \sd (\gamma) \hgt }\), where $\sd
    (\gamma) \hgt \in \Apa$.
  \end{enumerate}
\end{proposition}
\begin{proof}
  (a) Since \(\gamma \in \ST\) fixes \(\rho (x) \in \Apa\),
  \[
  \gamma (x) = \gamma u \rho (x) = \gamma u \gamma^{-1} \rho (x).
  \]
  This point equals \(x = u \rho(x)\) if and only if \(\gamma u^{-1} \gamma^{-1} u \rho (x) = \rho (x)\), which is equivalent to \([\gamma ,u^{-1}] \in \PC[\rho (x)]\).  As \(u \in \Un^+\) and~\(\gamma\) normalises~\(\Un^+\), this is equivalent to \([\gamma,u^{-1}] \in \PC[\rho (x)] \cap \Un^+ = \UC[\rho (x)]^+\).

  (b) The decomposition~\eqref{eq:prodUalpha} is unique once we fix an ordering on~\(\Phi^+\), but the terms~\(u_\alpha\) may depend on this ordering.  Let \(\Phi^* \defeq \Phi^+ \setminus \Delta\) be the set of non-simple positive roots.  Then \(\bigcup_{\alpha \in \Phi^*} (\Un_{\alpha} \cap \UC[C_0])\) generates a normal subgroup~\(\UC[C_0]^*\) of~\(\UC[C_0]^+\).  The quotient \(\UC[C_0]^+ / \UC[C_0]^*\) is abelian and can be identified with a lattice in the \(\F\)\nb-vector space \(\prod_{\alpha \in \Delta} \Un_{\alpha} \).  The image of~\(u\) in \(\UC[C_0]^+ / \UC[C_0]^*\) is \(\prod_{\alpha \in \Delta} u_\alpha\), which shows that the ingredients~\(u_\alpha\) of~\eqref{eq:prodUalpha} for \(\alpha\in\Delta\) are independent of the ordering of~\(\Phi^+\).

  Suppose now that~\(\gamma\) fixes \(u y \in u \Apa\).  By part~(a), we have \([\gamma, u^{-1}] \in \UC[y]^+\).  Since~\(\gamma\) normalises the groups \(\Un_{\alpha,r}\) for \(\alpha \in \Phi^+\), \(r \in \R\), this implies
  \begin{equation}
    \label{eq:prodCommutators}
    [\gamma,u^{-1}] \UC[y]^* = \prod_{\alpha \in \Delta} [\gamma,u_\alpha^{-1}] \UC[y]^* \in \UC[y]^+ / \UC[y]^*.
  \end{equation}
  But on the vector space \(\Un_{\alpha} \) the map \(a \mapsto [\gamma ,a]\) can be identified with multiplication by \(\alpha (\gamma) - 1\).  Hence~\eqref{eq:prodCommutators} is equivalent to
  \begin{equation}
    \label{eq:valualpha}
    u_\alpha \in (\alpha (\gamma) - 1)^{-1} \Un_{\alpha,-\alpha (y)}
  \end{equation}
  for all \(\alpha \in \Delta\). Together with \eqref{eq:Usplit} implies the statement~(b).

  (c) We fix an ordering \(\Phi^+ = \{\alpha_1 ,\alpha_2, \dotsc, \alpha_k \}\) with 
$\hgt (\alpha_{i}) \leq \hgt (\alpha_{i+1})$ for all $i$, and we get a unique decomposition 
\(u = \prod_{i=1}^k u_{\alpha_i}\) in~\(\UC[C_0]^+\).  Similarly,
Proposition~\ref{prop:decompose}.\ref{prop:decomposeb} yields a unique decomposition
  \begin{equation}
    \label{eq:factorCommutator}
    \prod_{i=1}^k [\gamma ,u^{-1}]_{\alpha_i} = [\gamma ,u^{-1}] =
    \gamma u_{\alpha_k}^{-1} u_{\alpha_{k-1}}^{-1} \cdots u_{\alpha_1}^{-1} \gamma^{-1}
    u_{\alpha_1} u_{\alpha_2} \cdots u_{\alpha_k} .
  \end{equation}
  By construction \([\gamma ,u^{-1}]_{\alpha} \in \Un_{\alpha, f_{C_0} (\alpha)}\), and~\(\gamma\) fixes~\(uy\) if and only if, even more,
  \begin{alignat}{2}
    \label{eq:condAlpha}
    [\gamma ,u^{-1}]_{\alpha} &\in \Un_{\alpha,- \alpha (y)} &\qquad&\text{for all \(\alpha \in \Phi^+\).}\\
\intertext{Assuming~\eqref{eq:condAlpha}, we will show by induction on \(\hgt(\alpha)\) that}
    \label{eq:valuationCommutator}
    [\gamma, u_\alpha^{-1}] &\in \Un_{\alpha, - \alpha (y) + (1- \hgt(\alpha)) \sd (\gamma)} 
    &\qquad&\text{for all \(\alpha \in \Phi^+\).}
  \end{alignat}
Like in \eqref{eq:valualpha}, this statement is equivalent to 
$u_\alpha \in \Un_{\alpha, - \alpha (y) - \sd_\alpha (\gamma) + (1- \hgt(\alpha)) \sd (\gamma)}$, which
for roots~\(\alpha\) of height~\(1\) is part (b).

  Let us assume~\eqref{eq:valuationCommutator} for roots of height less than~\(k\).  Let~\(N_{>k}\) be the product of the groups~\(\Un_\alpha\) for roots~\(\alpha\) of height greater than~\(k\).  This is a normal subgroup of the Borel group \(\ST \Un^+\), and the subgroups \(\Un_\alpha\subseteq\Un^+\) for a root~\(\alpha\) of height~\(k\) become central in the quotient \(\Un^+/N_{>k}\).  We may determine the \(\alpha\)\nb-component for a root~\(\alpha\) of height~\(k\) by computations in~\(\Un^+/N_{>k}\) because of the uniqueness of the decomposition~\eqref{eq:prodUalpha}.

   Now we split~\(u\) up as \(u_{<k} u_k u_{>k}\), where the factors \(u_{<k}\), \(u_k\) and~\(u_{>k}\) contain the contributions~\(u_\alpha\) of positive roots~\(\alpha\) with height less than~\(k\), equal to~\(k\), and greater than~\(k\), respectively.  In the quotient~\(\Un^+/N_{>k}\), we may drop~\(u_{>k}\), and~\(u_k\) becomes central.  Hence
\begin{multline} \label{eq:gammau-1}
   [\gamma,u^{-1}]
   = \gamma u_{>k}^{-1} u_k^{-1} u_{<k}^{-1} \gamma^{-1} u_{<k} u_k u_{>k}
   \equiv \gamma u_k^{-1} \gamma^{-1} \gamma u_{<k}^{-1} \gamma^{-1} u_{<k} u_k \\
   \equiv \gamma u_k^{-1} \gamma^{-1} u_k \gamma u_{<k}^{-1} \gamma^{-1} u_{<k}
   = [\gamma,u_k^{-1}] [\gamma,u_{<k}^{-1}] 
   \equiv \Big( \prod_{\hgt (\alpha ) = k} [\gamma, u_\alpha^{-1}] \Big) [\gamma,u_{<k}^{-1}] ,
\end{multline}
where we compute in the quotient~\(\Un^+/N_{>k}\). We will use the induction hypothesis and 
the estimate on \([\gamma,u]_\alpha\) to estimate \([\gamma,u_\alpha^{-1}]\) when $\hgt (\alpha) = k$.

We first rewrite a commutator \([\gamma,z_1\dotsm z_j]\) as a product of iterated commutators
\begin{equation}
    \label{eq:termsBetaC}
    C(z_{i_1},\dotsc,z_{i_k}) \defeq [z_{i_1}, [z_{i_2}, \dotsc, [z_{i_{k-1}}, [\gamma,z_{i_k}]]\dotsc]].
\end{equation}
  We claim that \([\gamma,z_1\dotsm z_j]\) is a product of the factors \(C(z_{i_1},\dotsc,z_{i_k})\) with \(1\le i_1< i_2< \dotsb < i_k\le j\), each factor appearing exactly once.  The proof is by induction on~\(j\), the case \(j=1\) being clear.  For the induction step, we use
  \begin{align*}
    [\gamma,z_1\dotsm z_j] &= \gamma z_1 \gamma^{-1} [\gamma,z_2\dotsm z_j] z_1^{-1},\\
    \gamma z_1 \gamma^{-1} x_1\dotsm x_k z_1^{-1}
    &= [\gamma,z_1] \cdot [z_1,x_1]x_1 \cdot [z_1,x_2]x_2 \dotsm [z_1,x_k]x_k.  
  \end{align*}
  By the induction hypothesis, \([\gamma,z_2\dotsm z_j]\) is the product in some order of the factors \(C(z_{i_1},\dotsc,z_{i_k})\) for all \(2\le i_1<\dotsb<i_k\le j\).  Plugging this into the second equation above shows that \([\gamma,z_1\dotsm z_j]\) is the product in some order of the factors \(C(z_{i_1},\dotsc,z_{i_k})\) for all \(1\le i_1<\dotsb<i_k\le j\).  By the way, a more careful induction argument also yields the order of the factors: it is the reverse lexicographic order for the words \((j-i_k, i_{k-1}, i_{k-2},\dotsc,i_1)\).

Now we apply this to $u_{<k}^{-1} = u_{\alpha_l}^{-1} \cdots u_{\alpha_1}^{-1} =  z_1 \cdots z_l$.
By the induction hypothesis and by Lemma \ref{lem:multipleCommutators}, all the occurring 
$C (u^{-1}_{\alpha_{i_1}}, \cdots u^{-1}_{\alpha_{i_k}} )$ lie in the group generated by the $\Un_{\alpha,r}$ 
with $\alpha = \sum_{j=1}^k k_j \alpha_{i_j}$ and $r = \sd_{\alpha_{i_k}} (\gamma) + \sum_{j=1}^k k_j r_{i_j}$, 
where $k_j \in \Z_{>0}$ and 
\[
r_{i_j} = -\alpha_{i_j}(y) - \sd_{\alpha_{i_j}}(\gamma) + (1 - \hgt (\alpha_{i_j})) \sd (\gamma) .
\]
For such $\alpha \in \Phi^+$ and $r \in \R$ we have
\begin{multline} \label{eq:valTerms}
r = \sd_{\alpha_{i_k}} (\gamma) + \sum_{j=1}^k k_j \bigl( - \alpha_{i_j} (y) - \sd_{\alpha_{i_j}} (\gamma) +
    (1 - \hgt (\alpha_{i_j}))) \sd (\gamma) \bigr) \geq
    \\ - \alpha (y) + (1 - \hgt(\alpha)) \sd (\gamma) + \biggl( -1 + \sum_{j=1}^k k_j \biggr) \bigl( \sd (\gamma) - 
  \max_j \sd_{\alpha_{i_j}} (\gamma) \bigr)  \geq \\ 
- \alpha (y) + (1 - \hgt(\alpha)) \sd (\gamma).
\end{multline}
For a root $\alpha$ of height $k$, \eqref{eq:condAlpha} and \eqref{eq:gammau-1} show that
$[\gamma,u_\alpha^{-1}]$ must lie in the largest of the groups $\Un_{\alpha,-\alpha (y)}$ and 
$\Un_{\alpha,r}$. Now we see from \eqref{eq:valTerms} that in any case $[\gamma,u_\alpha^{-1}] \in
\Un_{\alpha,- \alpha (y) + (1 - \hgt(\alpha)) \sd (\gamma)}$, so
\[
u_\alpha^{-1},u_\alpha \in \Un_{\alpha, - \alpha (y) - \hgt(\alpha) \sd (\gamma)} =
\Un_\alpha \cap U^+_{y + \sd (\gamma) \hgt}. \qedhere
\]
\end{proof}

Given an arbitrary point \(y \in \Apa\), the condition in Proposition~\ref{prop:fixpoints}.(c) does not imply that~\(\gamma\) fixes~\(u y\).  Counterexamples exist whenever~\(\Phi\) contains an irreducible root system of rank greater than one.

Proposition~\ref{prop:fixpoints} only applies to fixed points of semisimple elements that lie in a split maximal torus.  (We will not consider the fixed points of non-semisimple elements of~\(\G\) in this article.)  For elements of non-split maximal tori we need yet another aspect of Bruhat--Tits theory.

\subsection{The non-split case}

The construction of the Bruhat--Tits building over \(p\)\nb-adic fields is functorial with respect to finite field extensions by \cite{Bruhat-Tits:Reductifs_I}*{9.1.17}.  For any such extension \(\tilde{\F} / \F\), the group
\[
\Gamma \defeq \{ \sigma \in \Aut(\tilde{\F}) : \sigma |_\F = \id_\F\}
\]
acts naturally on \(\bt{\tilde{\F}}\), and \(\bt{\F}\) is contained in \(\bt{\tilde{\F}}^\Gamma\).  In particular, for every \(g \in \Galg (\F)\) we have an inclusion
\begin{equation}
  \label{eq:Galoisinv}
  \bt{\F}^g = \bt{\tilde{\F}}^g \cap \bt{\F}
  \subseteq \bt{\tilde{\F}}^{\Gamma \times \gen{g}},
\end{equation}
where \(\gen{g} \subseteq \Galg (\tilde{\F})\) denotes the subgroup generated by~\(g\).

In general, \(\bt{\F}\) is strictly smaller than \(\bt{\tilde{\F}}^\Gamma\), even if \(\tilde{\F} / \F\) is a Galois extension (in which case~\(\Gamma\) is its Galois group).  Rousseau~\cite{Rousseau:Thesis} proved that \(\bt{\F} = \bt{\tilde{\F}}^\Gamma\) if \(\tilde{\F} / \F\) is a tamely ramified Galois extension, see also~\cite{Prasad:Galois-fixed}.  Consequently, \eqref{eq:Galoisinv} is an equality for such extensions.

Let \(\T = \Talg (\F)\) be a maximal torus and \(\tilde{\F} / \F\) a finite Galois extension over which~\(\Talg\) splits, as in Proposition~\ref{prop:splitG}.  Since~\(\Talg\) is defined over~\(\F\), it is \(\Gamma\)\nb-stable, and hence the corresponding apartment~\(\tApa{\Talg(\tilde{\F})}\) of \(\bt{\tilde{\F}}\) is \(\Gamma\)\nb-stable.  The action of \(\Gamma\) on \(\tApa{\Talg(\tilde{\F})}\) is linear, so that the origin of~\(\tApa{\Talg(\tilde{\F})}\) is fixed.  Thus Rousseau's above result implies that
\begin{equation}
  \label{eq:Rousseau}
  \bt{\F} \cap \tApa{\Talg (\tilde{\F} )} \neq \emptyset
  \quad\text{if \(\tilde{\F} / \F\) is tamely ramified.}
\end{equation}

Any \(g \in \G\) acts on \(\Lie(\Galg) \bigm/ \Lie\bigl(\Calg(g)\bigr)\) by the adjoint representation.  The collection \(E(g)\) of eigenvalues (in some algebraic closure of~\(\F\)) is finite and does not contain~\(1\).  Assume that~\(\G\) is not a torus and that~\(g\) is regular, that is, \(\Calg(g)\) has the smallest possible dimension.  The number
\[
\sd(g) \defeq \max_{\lambda \in E(g)} v(\lambda - 1)
\]
is well-defined because every eigenvalue lies in a finite field extension of~\(\F\).  For irregular \(g \in \G\) we put \(\sd(g) = \infty\), because in that case the multiplicity of the eigenvalue~\(1\) of \(\Ad(g) \in \Endo_\F \bigl(\Lie(\Galg)\bigr)\) is too high.  Finally, if~\(\G\) is a torus, then we define \(\sd(g) = 0\) for all \(g \in \G\).  This definition stems from \cite{Adler-Korman:Local_character}*{Section 4}, where \(\sd(g)\) is called the \emph{singular depth} of~\(\gamma\).  We note that
\begin{equation} \label{eq:sfunction}
\sd (g z) = \sd(g) = \sd (h g h^{-1}) \qquad
\text{for \(z \in \Ze\) and \(h \in \G\).}
\end{equation}
Let \(\T\) and~\(\tilde{\F}\) be as above and let \(\tilde \Phi = \Phi\bigl(\Galg(\tilde{\F}), \Talg(\tilde{\F})\bigr)\) be the corresponding root system.  Let~\(\tilde{v}\) be the discrete valuation that extends~\(v\) and suppose \(\gamma \in \T\).  Then
\[
\sd(\gamma) = \max_{\alpha \in \tilde \Phi} \sd_\alpha (\gamma),
\]
which agrees with the notation from Proposition~\ref{prop:fixpoints}.(c).
Notice that \(\sd(\gamma) \ge 0\), for if \(\sd_\alpha (\gamma) < 0\) then \(\tilde{v} (\alpha (\gamma)) < 0\) , so \(\tilde{v} (\alpha (\gamma)^{-1}) > 0\) and \(\sd_{-\alpha}(\gamma) = 0\). 

Now we specialise to a compact regular semisimple element \(\gamma \in \T\).  Then \(\bt{\F}^\gamma\) is non-empty by the Bruhat--Tits Fixed Point Theorem.  If \(\T / \Zc\) is anisotropic, then \(\bt{\F}^\gamma\) is a finite polysimplicial complex (see \cite{Schneider-Stuhler:Rep_sheaves}*{p.~53}) and there is an open neighbourhood~\(\Un\) of~\(\gamma\) in~\(\G\) such that \(\bt{\F}^\Un = \bt{\F}^\gamma\).

If \(\T / \Zc\) is not anisotropic, we have a weaker substitute.  Since \(\bt{\F}^\gamma / \T\) is compact, there exists an open neighborhood~\(V\) of~\(\gamma\) in~\(\T\) such that \(\bt{\F}^g = \bt{\F}^\gamma\) for all \(g \in V\).  Let~\(\tilde{H}_r\) be as in~\eqref{eq:Hr}, but with respect to \(\bigl( \Galg(\tilde{\F}) ,\Talg(\tilde{\F}) \bigr)\).  First the authors believed that one could take \(V = \gamma \tilde H_r \cap \T\) for any \(r > \sd (\gamma)\), but this turns out to be incorrect in general.  We thank the referee for pointing out the weakness in our former argument.

\begin{lemma}
  \label{lem:constantfix}
  Write \(\hgt (\Phi) \defeq \max_{\alpha \in \Phi^+} \hgt (\alpha)\) and let \(r > \hgt (\Phi) \sd(\gamma)\).  Then \(\bt{\F}^{\gamma h} = \bt{\F}^\gamma\) for all \(h \in \tilde H_r \cap \T\).
\end{lemma}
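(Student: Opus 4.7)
The plan is to pass to a finite Galois extension $\tilde\F/\F$ over which $\Talg$ splits, so that the split-torus analysis of Proposition~\ref{prop:fixpoints} becomes available in the larger building $\bt{\tilde\F}$. Since $\bt{\F}$ embeds $\G$-equivariantly into $\bt{\tilde\F}$, it suffices to prove the equality $\bt{\tilde\F}^\gamma = \bt{\tilde\F}^{\gamma h}$ of fixed-point sets in $\bt{\tilde\F}$. A direct calculation first shows that $\sd_\alpha(\gamma h) = \sd_\alpha(\gamma)$ for every $\alpha \in \tilde\Phi$: writing $\alpha(\gamma h) - 1 = \alpha(\gamma)(\alpha(h)-1) + (\alpha(\gamma)-1)$, and using $\tilde v(\alpha(\gamma)) = 0$ (as $\gamma$ is compact), $\tilde v(\alpha(h)-1) \ge r$, and $\sd_\alpha(\gamma) \le \sd(\gamma) < r$, the dominant summand is $\alpha(\gamma)-1$. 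Hence $\sd(\gamma h) = \sd(\gamma)$, so running the argument below with $(\gamma h, h^{-1})$ in place of $(\gamma, h)$ will yield the reverse inclusion; only $\bt{\tilde\F}^\gamma \subseteq \bt{\tilde\F}^{\gamma h}$ has to be shown.

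Fix $x \in \bt{\tilde\F}^\gamma$. As in Section~\ref{sec:fixed_split}, choose a chamber $C_0$ in the apartment $\tApa{\tilde\T}$ of the split torus $\tilde\T = \Talg(\tilde\F)$ and write $x = u y$ with $y = \rho(x) \in \tApa{\tilde\T}$ and $u \in \tilde\UC[C_0]^+$, uniquely factored as $u = \prod u_\alpha$ via Proposition~\ref{prop:decompose}(b). The commutator identity
\[
[\gamma h, u^{-1}] = \gamma\,[h, u^{-1}]\,\gamma^{-1}\cdot[\gamma, u^{-1}],
\]
combined with $[\gamma, u^{-1}] \in \tilde\UC[y]^+$ from Proposition~\ref{prop:fixpoints}(a) and the equality $\gamma\,\tilde\UC[y]^+\,\gamma^{-1} = \tilde\UC[y]^+$ (which holds because $\gamma$ fixes $y$ and, being in the torus, normalises every root subgroup), reduces the assertion $(\gamma h)x = x$, via Proposition~\ref{prop:fixpoints}(a) applied to $\gamma h$, to the single claim $[h, u^{-1}] \in \tilde\UC[y]^+$.

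This last claim is proved by rerunning expansion~\eqref{eq:factorCommutator} with $h$ in place of $\gamma$. The $\alpha$-component of $[h, u^{-1}]$ in the factorisation of Proposition~\ref{prop:decompose}(b) is a product of iterated commutators of the form $[u_{\beta_1}^{\pm 1}, [\cdots [u_{\beta_i}^{\pm 1}, h] \cdots]]$ with $\beta_1 + \cdots + \beta_j = \alpha$, whose $v_\alpha$-valuation is, by~\eqref{eq:CommutatorGroups}, at least $\sd_{\beta_i}(h) + \sum_n v_{\beta_n}(u_{\beta_n})$. Combining $\sd_{\beta_i}(h) \ge r$, the bound $v_{\beta_n}(u_{\beta_n}) \ge -\beta_n(y) - \hgt(\beta_n)\,N$ from Proposition~\ref{prop:fixpoints}(c) applied to $\gamma$ (with $N = \sd(\gamma)$), and the additive identity $\sum_n \hgt(\beta_n) = \hgt(\alpha)$, the total valuation is at least $r - \hgt(\alpha)\,N - \alpha(y) \ge -\alpha(y)$, using $r > \hgt(\Phi)\,\sd(\gamma) \ge \hgt(\alpha)\,N$. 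Hence each $\alpha$-component of $[h,u^{-1}]$ lies in $\Un_{\alpha,-\alpha(y)}$; the non-reduced roots $2\beta$ are dealt with by the parallel secondary induction used at the end of the proof of Proposition~\ref{prop:fixpoints}(c). This exhibits $[h, u^{-1}] \in \tilde\UC[y]^+$ as required.

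The main obstacle is the bookkeeping in the last step: every term of the canonical $\alpha$-decomposition of $[h, u^{-1}]$ must nest exactly one innermost commutator with $h$, so that the single gain $\sd_{\beta_i}(h) \ge r$ compensates the cumulative loss $\hgt(\alpha)\,\sd(\gamma)$ produced by the factorisation of $u$ through Proposition~\ref{prop:fixpoints}(c). This telescoping is what dictates the factor $\hgt(\Phi)$ in the hypothesis on $r$.
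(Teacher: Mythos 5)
Your proof is correct, and it rests on the same pillars as the paper's own argument: descent to the splitting field \(\tilde\F\), the fixed-point criterion and the valuation bound of Proposition~\ref{prop:fixpoints}, and the commutator estimate \eqref{eq:CommutatorGroups}; in particular you identify correctly that the factor \(\hgt(\Phi)\) is forced by the loss \(v_\alpha(u_\alpha)\ge -\alpha(y)-\hgt(\alpha)\sd(\gamma)\) coming from Proposition~\ref{prop:fixpoints}(c). The organisation, however, is genuinely different. The paper fixes \(y\) and compares, recursively in \(\hgt(\alpha)\) and in Chevalley coordinates \(\tilde\Un_\alpha\cong\tilde\F\), the full \emph{solution sets} of the fixed-point equations for \(\gamma\) and for \(\gamma h\), showing that they agree modulo the parts of \(\lambda_\alpha\) of valuation at least \(-\alpha(y)\), which do not affect the point \(uy\). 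You instead prove the single inclusion \(\bt{\tilde\F}^\gamma\subseteq\bt{\tilde\F}^{\gamma h}\) via the group-level identity \([\gamma h,u^{-1}]=\gamma[h,u^{-1}]\gamma^{-1}\cdot[\gamma,u^{-1}]\), which reduces everything to the single claim \([h,u^{-1}]\in\tUC[y]^+\), and you recover the converse by symmetry after checking \(\sd_\alpha(\gamma h)=\sd_\alpha(\gamma)\). This is a cleaner reduction that avoids the recursive comparison of equations; its delicate points are exactly the ones you flag (each term in the \(\alpha\)-component of \([h,u^{-1}]\) nests precisely one innermost commutator with \(h\), so the gain \(\sd_{\beta_i}(h)\ge r\) is available once per term, and the non-reduced roots require the secondary induction), and these are on the same footing as the corresponding steps in the paper's proof of Proposition~\ref{prop:fixpoints}(c). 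Note that your appeal to Proposition~\ref{prop:fixpoints}(c) is legitimate only because \(x\) is assumed to be \(\gamma\)-fixed; this is precisely why the one-direction-plus-symmetry structure, rather than a simultaneous treatment of both elements, is needed.
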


\begin{proof}
  In view of \eqref{eq:Galoisinv} it suffices to prove the corresponding statement for fixed points in the building \(\bt{\tilde \F}\).  We use the notation from the proof of Proposition \ref{prop:fixpoints}, but with some additional tildes.  We want to know when~\(\gamma\) fixes~\(u y\), for some point \(y \in\tilde{\Apa}\).  According to~\eqref{eq:condAlpha}, this is equivalent to
  \begin{equation}
    \label{eq:condTildeAlpha}
    [\gamma ,u^{-1}]_\alpha \in \tilde \Un_{\alpha, -\alpha (y)} \qquad \text{for all }
    \alpha \in \tilde \Phi.
  \end{equation}
From \eqref{eq:gammau-1} we know that apart from $[\gamma, u_\alpha^{-1}]$, all the contributions to 
\([\gamma ,u^{-1}]_\alpha\) come from commutators of elements $u_\beta^{-1}$ with $\hgt (\beta) < \hgt (\alpha)$.
Supposing that~\(u_\beta\) has already been fixed for all roots~\(\beta\) of smaller height than $\alpha$, 
\eqref{eq:condTildeAlpha} determines which \(u_\alpha \in \tilde \Un_\alpha\) can give rise to fixed 
points~\(u y\).

  Recall from Section~\ref{sec:prolonged_root} that we have a Chevalley basis of \(\Lie[\tilde \F](\Galg)\) and corresponding isomorphisms of algebraic groups \(\tilde \Un_\alpha \cong \tilde \F\).  
These restrict to
\[
\tilde \Un_{\alpha,r} \cong \{ \lambda \in \tilde \F : \tilde v (\lambda) \geq r \}
\qquad \text{for all } r \in \R ,
\]
and if $u_\alpha$ corresponds to  $\lambda_\alpha \in \tilde{\F}$, then \([\gamma ,u_\alpha^{-1}] \) becomes \((1 - \alpha (\gamma)) \lambda_\alpha\).
Because we are interested in~\(u y\), the component~\(u_\alpha\) is determined only modulo \(\tilde \Un_{\alpha,-\alpha (y)}\), that is, \(\lambda_\alpha\) modulo \(\{ \lambda \in \tilde \F : \tilde v (\lambda) \geq -\alpha (y) \}\) is all that matters.

Now we compare~\(\gamma\) with~\(\gamma h\). We note that for all $\beta \in \Phi$
\begin{multline} \label{eq:valGammah}
  \tilde v \bigl( (1 - \beta (\gamma)) - (1 - \beta (\gamma h)) \bigr) = \tilde v \bigl(
  \beta (\gamma) (\beta (h) - 1) \bigr) \\=
  \tilde v (\beta (h) - 1) = \sd_\beta (h) \geq r > \hgt (\Phi) \sd (\gamma) .
\end{multline}
By~\eqref{eq:valTerms} the valuation of a contribution from $C (u^{-1}_{\alpha_{i_1}}, \cdots u^{-1}_{\alpha_{i_k}} )$ to $[\gamma, u^{-1}]_\alpha$ is at least
\begin{equation} \label{eq:valTermsL}
-\alpha (y) + (1 - \hgt(\alpha)) \sd (\gamma) .
\end{equation}
Recall that $C (u^{-1}_{\alpha_{i_1}}, \cdots u^{-1}_{\alpha_{i_k}} )$ also involves 
$[\gamma,u^{-1}_{i_k}]$. If we use $\gamma h$ instead of $\gamma$, then by \eqref{eq:valGammah} 
and \eqref{eq:valTermsL} we get a new element whose $v_\alpha$-value  differs only in the fractional 
ideal of~\(\tilde \F\) where the valuation is at least
\[
-\alpha (y) + (1 - \hgt(\alpha)) \sd (\gamma) + \hgt(\Phi) \sd (\gamma)
\geq -\alpha (y) + \sd (\gamma) .
\]
So, if the \(u_\beta\) with \(\hgt(\beta) < \hgt(\alpha)\) have already been fixed, then the condition 
\eqref{eq:condTildeAlpha} for both $\gamma$ and $\gamma h$ leads to two sets of solutions for 
\(\lambda_\alpha\), and these sets differ only in the parts of valuation at least
\[
-\alpha (y) + \sd (\gamma) - \sd_\alpha (\gamma) \geq -\alpha (y).
\]
But these parts do not influence the point~\(u y\).  Hence~\(\gamma h\) fixes such a point~\(u y\) if and only if~\(\gamma\) does.  Since this holds for all \(y \in \tilde \Apa\) we conclude that
\[
\bt{\tilde \F}^{\gamma h} = \bt{\tilde \F}^\gamma .  \qedhere
\]
\end{proof}

\section{The groups \texorpdfstring{$\UC^{(e)}$}{UOmega(e)}}
\label{sec:Usigmae}

Schneider and Stuhler introduced an important system of compact subgroups of~\(\G\), which they used to derive several interesting results on complex smooth \(\G\)\nb-representations in~\cite{Schneider-Stuhler:Rep_sheaves}.  These subgroups were also studied by Moy and Prasad in \cites{Moy-Prasad:Unrefined, Moy-Prasad:Jacquet} for their theory of unrefined minimal types, and by Vign\'eras in~\cite{Vigneras:Cohomology} in the context of \(\G\)\nb-representations on vector spaces over general fields.

Let~\(\tilde{\R}\) be the set \(\R \cup \{ r{+} : r \in \R\} \cup \{\infty\}\) endowed with the ordering
\[
r < r{+} < s < s{+} < \infty \qquad \text{if \(r<s\).}
\]
We define addition and multiplication with positive numbers on~\(\tilde{\R}\) in the obvious way, so that they respect the ordering.  For example
\[
r + (s{+}) = (r+s){+} \quad\text{and}\quad 2 \cdot r{+} = (2r){+}.
\]
Starting with the filtrations \eqref{eq:Ualphar} and~\eqref{eq:Hr} we define for \(\alpha \in \Phi\) and \(r \in \R\):
\begin{equation}\label{eq:defUa+}
  \begin{alignedat}{2}
    \Un_{\alpha, r+} & \defeq \bigcup_{s > r} \Un_{\alpha,s}, &\qquad
    \Un_{\alpha,\infty} & \defeq\{1\} ,\\
    H_{r+} & \defeq \bigcup_{s > r} H_s, &\qquad
    H_\infty & \defeq \{1\}.
  \end{alignedat}
\end{equation}
Since the filtrations are discrete, we have \(\Un_{\alpha,r+} = \Un_{\alpha,r + \epsilon}\) for sufficiently small \(\epsilon > 0\), and similarly for~\(H_{r+}\).

For a function \(f\colon \Phi\cup \{0\} \to \tilde{\R}\), let~\(\UC[f]\) be the subgroup of~\(\G\) generated by \(\bigcup_{\alpha\in\Phi} \Un_{\alpha,f(\alpha)} \cup H_{f(0)}\).  For non-empty \(\Omega \subseteq \Apa\) we vary on~\eqref{eq:fOmega} by
\begin{equation}
  \label{eq:f*Omega}
  f^*_\Omega\colon \Phi\cup \{0\} \to \tilde{\R},\qquad
  \alpha \mapsto
  \begin{cases}
    \inp{\Omega}{-\alpha}{}+ &\text{if~\(\alpha\) is constant on~\(\Omega\),}\\
    \sup_{x\in\Omega} {}\inp{x}{-\alpha} &\text{otherwise.}
  \end{cases}
\end{equation}
For \(e \in \R_{\ge 0}\), we define
\[
\UC^{(e)} \defeq \Un_{f_\Omega^*+e}.
\]
Notice that the closure~\(\cl{\Omega}\) of~\(\Omega\) yields \(f^*_{\cl{\Omega}} = f^*_\Omega\) and hence \(\UC[\cl{\Omega}]^{(e)} = \UC^{(e)}\).

\begin{example}
  \label{exa:U_for_GL}
  Let \(\G = \GL_n (\F)\).  We identify the standard apartment~\(\Apa\) of \(\bt[\GL_n]{\F}\) with \(\R^n \mathbin/ \R (1,1,\dotsc, 1)\), such that the set of vertices is the image of~\(\Z^n\).  Denote the smallest integer larger than \(r{+} \in \tilde{\R}\) by~\(\ceil{r}\).  Recall the fractional ideals~\(\maxid^m\) in~\(\F\) for \(m \in \Z\).  For a point \(x = (x_1,\dotsc,x_n) \in \Apa\) and \(e\in\R_{\ge0}\) we have
  \[
  \UC[x]^{(e)} =
  \left(\begin{gathered}\xymatrix@-2.6em{
        1 + \maxid^{\ceil{e}} \ar@{.}[ddrr]&
        \maxid^{\ceil{x_2 - x_1 + e}} \ar@{.}[rrr] \ar@{.}[dddrrr]&&&
        \maxid^{\ceil{x_n - x_1 + e}} \ar@{.}[ddd]\\
        \maxid^{\ceil{x_1 - x_2 + e}}\ar@{.}[dddrrr]\ar@{.}[ddd]\\
        &&1 + \maxid^{\ceil{e}}\ar@{.}[ddrr]\\
        &&&&\maxid^{\ceil{x_n - x_{n-1} + e}}\\
        \maxid^{\ceil{x_1 - x_n + e}}\ar@{.}[rrr]&&&
        \maxid^{\ceil{x_{n-1} - x_n + e}} & 1 + \maxid^{\ceil{e}}
    }\end{gathered}\right)
  \]
  If \(e\in\Z_{\ge0}\) and \(\Omega \subset \Apa\) is the standard chamber, defined by \(x_1>x_2>\dotsb>x_n>x_1-1\), then
  \[
  \UC^{(e)} =
  \left(\begin{gathered}\xymatrix@-2em{
        1 + \maxid^{e+1} \ar@{.}[ddrr]&
        \maxid^e \ar@{.}[rrr] \ar@{.}[dddrrr]&&&
        \maxid^e \ar@{.}[ddd]\\
        \maxid^{e+1}\ar@{.}[dddrrr]\ar@{.}[ddd]\\
        &&1 + \maxid^{e+1}\ar@{.}[ddrr]\\
        &&&&\maxid^e\\
        \maxid^{e+1}\ar@{.}[rrr]&&&
        \maxid^{e+1} & 1 + \maxid^{e+1}
    }\end{gathered}\right).
  \]
  Notice that \(\UC^{(0)}\) is contained in the standard Iwahori subgroup of \(\GL_n (\F)\), and that they are not equal because the diagonal entries differ.
\end{example}

The groups \(\UC^{(e)}\) satisfy the following \emph{unique decomposition property}.

\begin{proposition}[\cite{Bruhat-Tits:Reductifs_I}*{6.4.48}]
  \label{prop:udp}
  For any ordering of~\(\Phr\) the product map
  \[
  H_{e{+}} \times \prod_{\alpha \in \Phr} (\UC^{(e)} \cap \Un_\alpha)
  \to \UC^{(e)}
  \]
  is a diffeomorphism.  Moreover \(\UC^{(e)} \cap \No(\ST) = H_{e{+}}\) and for \(\alpha \in \Phr\)
  \[
  \UC^{(e)} \cap \Un_\alpha =
  \begin{cases}
    \Un_{\alpha ,f_\Omega^* (\alpha) + e} &\text{if \(2\alpha \notin \Phi\),}\\
    \Un_{\alpha ,f_\Omega^* (\alpha) + e} \cdot
    \Un_{2\alpha ,f_\Omega^* (2\alpha) + e} &\text{if \(2\alpha \in \Phi\).}
  \end{cases}
  \]
\end{proposition}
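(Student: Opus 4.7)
The plan is to derive this from the general unique-decomposition theorem of Bruhat and Tits \cite{Bruhat-Tits:Reductifs_I}*{6.4.48}, which attaches a decomposition of exactly this shape to the group $\UC[f]$ whenever $f\colon \Phi\cup\{0\} \to \tilde\R$ is a concave function with $f(0) \geq 0$.  The only substantive thing to check is that $f \defeq f^*_\Omega + e$ meets these hypotheses.

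The main step is verifying concavity, namely $f(\alpha + \beta) \leq f(\alpha) + f(\beta)$ for all $\alpha, \beta, \alpha+\beta \in \Phi \cup \{0\}$.  The value $f(0) = e{+}$ is nonnegative because $e \geq 0$.  For roots $\alpha$ and~$\beta$, neither of which is constant on~$\Omega$, the pointwise identity $\inp{x}{-(\alpha+\beta)} = \inp{x}{-\alpha} + \inp{x}{-\beta}$ combined with subadditivity of the supremum gives concavity of~$f^*_\Omega$ immediately; adding the constant~$e$ preserves it.  The case distinctions in~\eqref{eq:f*Omega} when a root happens to be constant on~$\Omega$ require a short calculation with the ``$+$'' convention in~$\tilde\R$.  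The genuinely delicate pair is $(\alpha, \alpha)$ with $2\alpha \in \Phi$, where one needs $f^*_\Omega(2\alpha) \leq 2 f^*_\Omega(\alpha)$; this holds because $\inp{x}{-2\alpha} = 2\inp{x}{-\alpha}$.  One should also confirm $f(\alpha) + f(-\alpha) \geq f(0)$, which follows from $\inp{x}{-\alpha} + \inp{x}{\alpha} = 0$.

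Granting concavity, \cite{Bruhat-Tits:Reductifs_I}*{6.4.48} simultaneously delivers the three claims: the product map is a diffeomorphism for any ordering of $\Phr$, the equality $\UC^{(e)} \cap \No(\ST) = H_{f(0)} = H_{e{+}}$, and the formula $\UC^{(e)} \cap \Un_\alpha = \Un_{\alpha, f(\alpha)} \cdot \Un_{2\alpha, f(2\alpha)}$ for $\alpha \in \Phr$, with the second factor understood as trivial when $2\alpha \notin \Phi$.  Substituting $f = f^*_\Omega + e$ then yields precisely the formulas in the statement.  The main obstacle is really only the concavity check itself, which is routine but must be done carefully because the ``$+$'' bookkeeping in~\eqref{eq:f*Omega} mixes strict and non-strict suprema depending on whether a root is constant on~$\Omega$; once that is handled, the proposition is just the content of the cited Bruhat--Tits theorem.
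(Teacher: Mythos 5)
Your proposal is correct and coincides with the paper's treatment: the proposition is stated there purely as a citation of Bruhat--Tits 6.4.48, so the only content to supply is exactly what you supply, namely that \(f^*_\Omega + e\) is a concave function on \(\Phi\cup\{0\}\) with nonnegative value at~\(0\). Your concavity check (including the strict-inequality point that when a root is non-constant on~\(\Omega\) the sum of suprema strictly exceeds the value at any single point, which is what absorbs the ``\(+\)'' when \(\alpha+\beta\) happens to be constant) is the right verification.
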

By a diffeomorphism between \(p\)\nb-adic algebraic varieties we mean a homeomorphism~\(f\), such that \(f\) and~\(f^{-1}\) are given locally by convergent power series.  The above product map is obviously algebraic, but its inverse need not be.

There is a version of the unique decomposition property with \(\Phr \cup \{0\}\) instead of~\(\Phr\).  It follows easily from Proposition~\ref{prop:udp}, since~\(H_{e{+}}\) normalises~\(\Un_{\alpha,r}\).

The above decomposition implies that the subgroups~\(\UC^{(e)}\) behave well with respect to field extensions and Levi subgroups.

\begin{lemma}
  \label{lem:Uetildea}
  Let \(\tilde{\F} / \F\) be a finite field extension and let \(\tUC^{(e)}\subseteq\Galg(\tilde{\F})\) be defined like \(\UC^{(e)}\subseteq\Galg(\F)\).  Then \(\UC^{(e)} = \tUC^{(e)} \cap \Galg (\F)\).
\end{lemma}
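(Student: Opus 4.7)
I would prove the two inclusions separately.

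The inclusion $\UC^{(e)} \subseteq \tUC^{(e)} \cap \Galg(\F)$ is the easy direction. Since $\UC^{(e)}$ is generated by $H_{e+}$ and the $\Un_{\alpha, f^*_\Omega(\alpha)+e}$ for $\alpha \in \Phr$, it suffices to check that each generator lies in $\tUC^{(e)}$. The embedding $\Apa \hookrightarrow \tApa{\tilde\Salg}$ is compatible with $\rho_\ST$: for $\beta \in \tilde\Phi$ and $x \in \Apa$, $\inp{x}{\beta} = \inp{x}{\beta|_\ST}$. Thus $\tilde f^*_\Omega(\beta) = f^*_\Omega(\alpha)$ when $\rho_\ST(\beta) = \alpha$ and $\tilde f^*_\Omega(\beta) = 2 f^*_\Omega(\alpha)$ when $\rho_\ST(\beta) = 2\alpha$. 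Substituting these into the defining formulas~\eqref{eq:Ualphar} and~\eqref{eq:Hr} immediately yields $\Un_{\alpha,f^*_\Omega(\alpha)+e} \subseteq \tUC^{(e)}$ and $H_{e+} \subseteq \tUC^{(e)}$.

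For the reverse inclusion, I would take $g \in \tUC^{(e)} \cap \Galg(\F)$, apply Proposition~\ref{prop:udp} over $\tilde\F$, and descend to $\F$ by Galois invariance. Let $\Gamma$ denote the Galois group of $\tilde\F/\F$, and partition $\tilde\Phr$ into the blocks $T_\alpha \defeq \rho_\ST^{-1}\{\alpha,2\alpha\} \cap \tilde\Phr$ for $\alpha \in \Phr$; these are $\Gamma$-stable because $\Salg$ is defined over $\F$. By the commutator relations~\eqref{eq:CommutatorGroups}, $\prod_{\beta \in T_\alpha}\tilde\Un_{\beta,\cdot}$ is in fact a subgroup of $\Galg(\tilde\F)$, since any $\Z_{>0}$-combination of elements of $T_\alpha$ restricts under $\rho_\ST$ to a positive integer multiple of $\alpha$ and so, when a root, remains in $T_\alpha$. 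Ordering $\tilde\Phr$ compatibly with this partition, Proposition~\ref{prop:udp} yields a unique \emph{block} decomposition
\[
g = \tilde h \cdot \prod_{\alpha \in \Phr} g_\alpha,
\qquad \tilde h \in \tilde H_{e+},\quad g_\alpha \in \prod_{\beta \in T_\alpha}\tilde \Un_{\beta,\tilde f^*_\Omega(\beta)+e}.
\]
Recalling that $\Ualg_{2\alpha} \subseteq \Ualg_\alpha$, the $\tilde\F$-algebraic group $\prod_{\beta \in T_\alpha}\tilde\Ualg_\beta$ is exactly $\Ualg_\alpha \otimes_\F \tilde\F$. Galois invariance of $g$ together with uniqueness of the block decomposition forces $\tilde h$ and each $g_\alpha$ to be $\Gamma$-fixed, so Galois descent gives $\tilde h \in H_{e+}$ and $g_\alpha \in \Un_\alpha$. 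Reading the valuation constraints through~\eqref{eq:Ualphar} shows $g_\alpha \in \Un_{\alpha,f^*_\Omega(\alpha)+e}\cdot\Un_{2\alpha,f^*_\Omega(2\alpha)+e}$, which lies in $\UC^{(e)} \cap \Un_\alpha$ by Proposition~\ref{prop:udp}. Hence $g \in \UC^{(e)}$.

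The main obstacle is the block-uniqueness step. One needs to verify that, having chosen an ordering of $\tilde\Phr$ refining the partition $\{T_\alpha\}$, the grouped factors $g_\alpha$ depend only on $g$ and the partition, not on the internal ordering within each $T_\alpha$. This rests on the subgroup property of $\prod_{\beta \in T_\alpha}\tilde\Un_{\beta,\cdot}$ noted above, combined with the finer uniqueness from Proposition~\ref{prop:udp}. Once block-uniqueness is in hand, $\Gamma$-invariance propagates block-by-block and delivers the $\F$-decomposition of $g$.
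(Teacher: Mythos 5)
Your first inclusion is the paper's own argument: the identity \(\tilde{f}^*_\Omega(\beta) = f^*_\Omega(\rho_\ST(\beta))\) together with the descent formulas \eqref{eq:Ualphar} and \eqref{eq:Hr} shows every generator of \(\UC^{(e)}\) lies in \(\tUC^{(e)}\).  For the reverse inclusion you take a genuinely different route, and it has a real gap.  The paper's citation of Proposition~\ref{prop:udp} is meant to deliver rationality of the factors \emph{algebraically}: the unique decomposition is an isomorphism of varieties whose blocks are the \(\tilde{\F}\)-points of the subgroups \(\Calg(\Salg)\) and \(\Ualg_\alpha\) (\(\alpha\in\Phr\)), all of which are defined over~\(\F\) (cf.\ Proposition~\ref{prop:decompose}.\ref{prop:decomposeb}).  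Hence an element \(g\in\tUC^{(e)}\cap\Galg(\F)\) automatically has components \(g_\alpha\in\Ualg_\alpha(\tilde{\F})\cap\Galg(\F)=\Un_\alpha\) and \(\tilde h\in\Ce(\ST)\), after which \eqref{eq:Ualphar} and \eqref{eq:Hr} identify \(\Un_\alpha\cap\tUC^{(e)}\) with \(\UC^{(e)}\cap\Un_\alpha\) and \(\Ce(\ST)\cap\tUC^{(e)}\) with \(H_{e{+}}\).  No Galois theory enters.  Your substitute — deducing rationality from \(\Gamma\)-fixedness — needs \(\Ualg_\alpha(\tilde{\F})^\Gamma=\Ualg_\alpha(\F)\), i.e.\ \(\tilde{\F}^\Gamma=\F\).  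The lemma is stated for an arbitrary finite extension, for which \(\Gamma=\Aut(\tilde{\F}/\F)\) may be trivial or have \(\tilde{\F}^\Gamma\supsetneq\F\) (purely inseparable phenomena do occur here, cf.\ the proof of Proposition~\ref{prop:Pg}); your proof covers only Galois extensions.  In the paper's applications \(\tilde{\F}\) is the Galois splitting field of Proposition~\ref{prop:splitG}, so the restriction is survivable, but the algebraic argument gives the general statement for free.

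Two further points.  Your block decomposition omits the roots \(\beta\in\tilde{\Phi}\) with \(\rho_\ST(\beta)=0\): these contribute genuine factors \(\tilde{\Un}_{\beta,e{+}}\) to the decomposition of Proposition~\ref{prop:udp} over \(\tilde{\F}\) and must be grouped with \(\tilde{H}_{e{+}}\) into a zero block inside \(\Calg(\Salg)(\tilde{\F})\), whose intersection with \(\Ce(\ST)\) is \(H_{e{+}}\) precisely by~\eqref{eq:Hr}; as written your factorisation \(g=\tilde h\prod g_\alpha\) with \(\tilde h\in\tilde{H}_{e{+}}\) is incomplete.  Also, \(\tilde{\Salg}\) is only required to contain \(\Salg(\tilde{\F})\) and need not be \(\Gamma\)-stable, so \(\Gamma\) does not permute the individual \(\tilde{\Un}_\beta\); your argument survives because each block \(\prod_{\beta\in T_\alpha}\tilde{\Ualg}_\beta=\Ualg_\alpha\otimes_\F\tilde{\F}\) is \(\Gamma\)-stable as the \(\tilde{\F}\)-points of an \(\F\)-group, and that — not \(\Gamma\)-stability of the index sets \(T_\alpha\) — is the correct justification.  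Your verification via \eqref{eq:CommutatorGroups} that each block product is a subgroup is correct.
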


\begin{proof}
  Let~\(\tilde{\ST}\) and~\(\rho_\ST\) be as on page~\pageref{eq:rhoS} and let \(\tApa{\tilde{\ST}} \supseteq \Apa\) be the corresponding apartment of \(\bt{\tilde{\F}}\).  Then \(\tilde{f}^*_\Omega (\alpha) = f^*_\Omega (\rho_\ST (\alpha))\) for all \(\alpha \in \tilde{\Phi}\).  Now apply Proposition~\ref{prop:udp} and Equations \eqref{eq:Ualphar} and~\eqref{eq:Hr}.
\end{proof}

Let \(\Le_D = \Lalg_D (\F)\) be a standard Levi subgroup of~\(\G\).  Then a maximal split torus~\(\ST\) of~\(\G\) is a maximal split torus of~\(\Le_D\) as well, and the standard apartment of~\(\bt[\Lalg_D]{\F}\) is
\[
\Apa[D] \defeq \bigl( X_*(\ST) \bigm/ X_*(\Zc[\Le_D]) \bigr) \otimes_\Z \R =
\bigl( X_*(\Salg) \bigm/ X_*(\Salg_D) \bigr) \otimes_\Z \R.
\]
Since \(\ST_\Delta \subseteq \ST_D\), there is a quotient map between the apartments
\begin{equation}
  \label{eq:AxD}
  \Apa \to \Apa[\ST_D],\qquad x \mapsto x_D,
\end{equation}
in the buildings for \(\G\) and~\(\Le_D\).

\begin{lemma}
  \label{lem:Uetildeb}
  Let~\(\Omega_D\) be the image of~\(\Omega\) in the standard apartment~\(\Apa[D]\) of the building for~\(\Le_D\).  Then \(\UC[\Omega_D]^{(e)} = \UC^{(e)} \cap \Le_D\) and
  \[
  \UC^{(e)} = \bigl( \UC^{(e)} \cap \Ru[\Pa_D] \bigr)
  \bigl( \UC^{(e)} \cap \Le_D \bigr)
  \bigl( \UC^{(e)} \cap \Ru[\bar{\Pa}_D] \bigr).
  \]
\end{lemma}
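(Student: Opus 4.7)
The plan is to derive both assertions from the unique decomposition property (Proposition~\ref{prop:udp}), applied in both \(\G\) and~\(\Le_D\).

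First I would verify two compatibility statements. (i)~The subgroup~\(H_{e+}\) is the same whether computed in \(\G\) or~\(\Le_D\), since \(H_r\) is defined purely in terms of \(\Ce(\ST)\) and \(\Ce(\ST) \subseteq \Lalg_\emptyset(\F) \subseteq \Le_D\).  (ii)~\(f^*_\Omega(\alpha) = f^*_{\Omega_D}(\alpha)\) for every \(\alpha \in \Phi_D \cup \{0\}\).  For~(ii) it suffices to check that each \(\alpha \in \Phi_D\) vanishes on \(X_*(\Salg_D)\), so descends to a linear functional on \(\Apa[D]\); this holds because \(\Salg_D \subseteq \Lalg_D = \Calg(\Salg_D)\) centralises every \(\Ualg_\alpha\) with \(\alpha \in \Phi_D\), and the \(\Salg\)-weights on \(\Lie(\Ualg_\alpha)\) are multiples of~\(\alpha\).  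The same observation yields \(\Phr \cap \Phi_D = \Phr_D\), the reduced subsystem associated to~\(\Le_D\).

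Next, I choose an ordering of~\(\Phr\) in which the roots of \(\Phr \cap \Phi^+ \setminus \Phi_D\) come first, then the roots of \(\Phr \cap \Phi_D\), and finally those of \(\Phr \cap \Phi^- \setminus \Phi_D\); I place the factor~\(H_{e+}\) together with the middle block.  Proposition~\ref{prop:udp} then provides a diffeomorphism that writes every element of~\(\UC^{(e)}\) uniquely as a product~\(u_+ \cdot m \cdot u_-\), where \(u_+\) lies in the subgroup generated by the \(\Un_{\alpha, f^*_\Omega(\alpha)+e}\) with \(\alpha \in \Phr \cap \Phi^+ \setminus \Phi_D\), \(m\) lies in the subgroup generated by~\(H_{e+}\) and the \(\Un_{\alpha, f^*_\Omega(\alpha)+e}\) with \(\alpha \in \Phr \cap \Phi_D\), and \(u_-\) lies in the analogous subgroup for \(\Phr \cap \Phi^- \setminus \Phi_D\).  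By construction \(u_+ \in \Ru[\Pa_D]\), \(m \in \Le_D\), and \(u_- \in \Ru[\bar\Pa_D]\), giving the claimed decomposition of~\(\UC^{(e)}\).

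For the identification \(\UC[\Omega_D]^{(e)} = \UC^{(e)} \cap \Le_D\), applying Proposition~\ref{prop:udp} to~\(\Le_D\) with~\(\Omega_D\) and using (i)--(ii) realises \(\UC[\Omega_D]^{(e)}\) as exactly the subgroup \(m\) ranges over, so one inclusion is immediate.  For the converse, given \(g \in \UC^{(e)} \cap \Le_D\) with decomposition \(g = u_+ m u_-\) as above, the relation \(u_+^{-1} u_+ m u_- = m u_-\) in \(\Le_D \cdot \Ru[\bar\Pa_D] = \bar\Pa_D\) forces \(u_+^{-1} g \in \Ru[\Pa_D] \cap \bar\Pa_D = \{1\}\), hence \(u_+ = g \in \Le_D \cap \Ru[\Pa_D] = \{1\}\); the same argument applied to \(g u_-^{-1}\) forces \(u_- = 1\), so \(g = m \in \UC[\Omega_D]^{(e)}\).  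The only delicate point is this uniqueness argument: one must combine the uniqueness of the factorisation in Proposition~\ref{prop:udp} with the transversality of \(\Ru[\Pa_D]\), \(\Le_D\), and~\(\Ru[\bar\Pa_D]\) inside~\(\G\), but this transversality is a standard consequence of the opposite-parabolic structure recalled in Section~\ref{sec:reductive}.
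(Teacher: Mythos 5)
Your argument is correct and is essentially the paper's proof, which likewise reduces everything to the agreement of \(f^*_\Omega\) and \(f^*_{\Omega_D}\) on \(\Phi_D\), the coincidence of the filtration subgroups \(\Un_{\alpha,r}\) and \(H_r\) computed in \(\Le_D\) and in \(\G\), and Proposition~\ref{prop:udp} with a block ordering of \(\Phr\). The only blemish is the final transversality step: from \(u_+^{-1}g = m u_-\) one obtains an element of \(\Pa_D \cap \bar{\Pa}_D = \Le_D\) rather than of \(\Ru[\Pa_D] \cap \bar{\Pa}_D\), which first forces \(u_- = 1\) and then \(u_+ = g m^{-1} \in \Le_D \cap \Ru[\Pa_D] = \{1\}\); this is a trivial repair.
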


\begin{proof}
  For \(\Omega \subseteq \Apa\) and \(\alpha\in\Phi_D\) we clearly have \(f^*_{\Omega_D}(\alpha) = f^*_\Omega (\alpha)\).  As the groups \(\Un_{\alpha,r}\) and~\(H_r\) are the same in~\(\Le_D\) and in~\(\G\), the statement follows from Proposition~\ref{prop:udp}.
\end{proof}

We are mainly interested in the cases where~\(\Omega\) is a point, a facet or a polysimplex.

\begin{theorem}
  \label{thm:UOmegae}
  For a point~\(x\), a polysimplex~\(\sigma\), and a general subset~\(\Omega\) of an apartment~\(\Apa\), the following hold:
  \begin{enumerate}[label=\textup{(\alph{*})}]
  \item\label{UOmegae_0} \(\UC^{(e)}\) is open if~\(\Omega\) is bounded.
  \item\label{UOmegae_1} \(\UC^{(e)}\) is compact.
  \item\label{UOmegae_2} \(\UC^{(e)}\) is normal in~\(\PC\).
  \item\label{UOmegae_3} \(\UC[x]^{(e)}\) fixes the star of~\(x\) pointwise.
  \item\label{UOmegae_4} \(\UC[\sigma]^{(e)} = \prod_{\text{\(x\) vertex of \(\sigma\)}} \UC[x]^{(e)}\) if \(e \in \Z_{\ge 0}\).
  \item\label{UOmegae_5} If \(x\) is an interior point of~\(\sigma\) and \(e\in\Z_{\ge 0}\), then \(\UC[x]^{(e)} = \UC[\sigma]^{(e)}\).
  \item\label{UOmegae_6} \(\UC^{(e)} \supseteq \UC^{(e')}\) whenever \(e \le e'\).
  \item\label{UOmegae_7} The groups \(\UC[\sigma]^{(e)}\) for \(e \in \N\) form a neighbourhood basis of~\(1\) in~\(\G\).
  \item\label{UOmegae_8} The group generated by the commutators \(\bigl[\UC^{(e)},\UC^{(e')}\bigr]\) is contained in~\(\UC^{(e+e')}\).
  \end{enumerate}
\end{theorem}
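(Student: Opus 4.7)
The plan is to deduce all nine assertions from Proposition \ref{prop:udp} (unique decomposition), the commutator relation \eqref{eq:CommutatorGroups}, and elementary properties of the discrete filtrations $\Un_{\alpha,r}$ and $H_r$. Parts \ref{UOmegae_0}, \ref{UOmegae_1}, \ref{UOmegae_6}, and \ref{UOmegae_7} are immediate from Proposition \ref{prop:udp}, which presents $\UC^{(e)}$ as a diffeomorphic direct product of $H_{e+}$ with the finitely many root factors $\Un_{\alpha,f^*_\Omega(\alpha)+e}$ for $\alpha \in \Phr$. Each factor is compact, and open precisely when its parameter is finite, i.e.\ when $\Omega$ is bounded; this yields \ref{UOmegae_0} and \ref{UOmegae_1}. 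Decreasingness of the filtrations in $r$ gives \ref{UOmegae_6}, and $\Un_{\alpha,r}\to\{1\}$ as $r\to\infty$ (combined with compactness) gives \ref{UOmegae_7}.

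Parts \ref{UOmegae_2} and \ref{UOmegae_8} both rest on \eqref{eq:CommutatorGroups}. For \ref{UOmegae_8}, the commutator $[\Un_{\alpha,f^*_\Omega(\alpha)+e},\Un_{\beta,f^*_\Omega(\beta)+e'}]$ sits in the subgroup generated by $\Un_{n\alpha+m\beta,\,n(f^*_\Omega(\alpha)+e)+m(f^*_\Omega(\beta)+e')}$ for $n,m\ge1$, and the concavity $f^*_\Omega(n\alpha+m\beta)\le n f^*_\Omega(\alpha)+m f^*_\Omega(\beta)$ (immediate from the sup definition) together with $n,m\ge 1$ and $e,e'\ge 0$ forces this into $\UC^{(e+e')}$. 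Part \ref{UOmegae_2} follows by the same calculation for the generators of $\UC$ (with parameters $f_\Omega \le f^*_\Omega$), combined with the observation that $n\in\NC$ fixes $\Omega$ pointwise, so its attached Weyl element preserves $f^*_\Omega$ after reindexing and thus $n$ permutes the factors of the decomposition, normalising $\UC^{(e)}$.

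Part \ref{UOmegae_3} is geometric. Checking generators: the factor $H_{e+}$ is killed by $\nu$ (since $\nu(H_0)=0$) and hence fixes all of $\Apa$; a root generator $\Un_{\alpha,-\alpha(x)+e+}$ fixes a point $y$ iff the next filtration value above $-\alpha(x)+e$ is at least $-\alpha(y)$, i.e.\ $\alpha(y-x)\ge -e-1/n_\alpha$. For $y$ in the open facet $F$ with $x\in\cl F$, the hyperplane $\Apa[\ST,\alpha,\alpha(x)]$ either contains $F$ entirely (giving $\alpha(y-x)=0$) or is disjoint from $F$, in which case $F$ lies strictly on one side and no other $\Gamma_\alpha$\nb-hyperplane separates $F$ from $x$, yielding $|\alpha(y-x)|<1/n_\alpha$; combined with $e\ge0$ the required inequality follows.

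Parts \ref{UOmegae_4} and \ref{UOmegae_5} use Proposition \ref{prop:udp} to compare root factors of $\UC[x]^{(e)}$, $\UC[\sigma]^{(e)}$, and the product $\prod_x \UC[x]^{(e)}$. The key arithmetic point is that for $e\in\Z\subseteq\Gamma_\alpha$, the shifted vertex values $-\alpha(x)+e$ (as $x$ varies over the vertices of $\sigma$) occupy consecutive elements of $\Gamma_\alpha$ in each root direction, by the structure of the polysimplicial decomposition of $\Apa$. Hence each $\Un_{\alpha,-\alpha(x)+e+}$ collapses onto a single filtration step, producing $\UC[x]^{(e)}\subseteq\UC[\sigma]^{(e)}$ and, by using unique decomposition to distribute each root factor of a given $g\in\UC[\sigma]^{(e)}$ to a vertex that supplies it, the ordered factorisation \ref{UOmegae_4}. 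For \ref{UOmegae_5}, an interior point $x\in\sigma$ has $-\alpha(x)$ strictly between consecutive $\Gamma_\alpha$\nb-values for each $\alpha$ non-constant on $\sigma$, so $-\alpha(x)+e+$ again lands on the same filtration jump as $f^*_\sigma(\alpha)+e$, yielding equality. The main obstacle will be \ref{UOmegae_4}: the bookkeeping of the $r{+}$ decorations in $\tilde{\R}$ against the discrete set $\Gamma_\alpha$ is delicate, and the integrality $e\in\Z$ is the essential hypothesis that makes the various filtration jumps line up exactly at vertex values.
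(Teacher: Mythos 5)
You should first be aware that the paper does not actually prove this theorem: it disposes of \ref{UOmegae_0} in one line from Proposition~\ref{prop:udp} and refers everything else to Schneider--Stuhler (Section I.2), Vign\'eras (Proposition 1.1) and Bruhat--Tits (6.4.41). So a self-contained derivation from Proposition~\ref{prop:udp} and \eqref{eq:CommutatorGroups} is a legitimately different (and more informative) route, and your overall strategy --- unique decomposition for \ref{UOmegae_0}, \ref{UOmegae_1}, \ref{UOmegae_6}, \ref{UOmegae_7}, the quasi-concavity of \(f^*_\Omega\) fed into \eqref{eq:CommutatorGroups} for \ref{UOmegae_2} and \ref{UOmegae_8}, and a root-by-root comparison of filtration jumps for \ref{UOmegae_4} and \ref{UOmegae_5} --- is exactly how the cited sources proceed. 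Two small points you leave implicit: in \ref{UOmegae_8} (and \ref{UOmegae_2}) the passage from commutators of \emph{generators} to the full commutator subgroup needs the target \(\UC^{(e+e')}\) to be normal in the group generated by \(\UC^{(e)}\cup\UC^{(e')}\), which you get from \ref{UOmegae_2} and \ref{UOmegae_6}; and in \ref{UOmegae_2} the case where \(n\alpha+m\beta\) is constant on \(\Omega\) but \(\alpha,\beta\) are not forces a strict inequality \(f^*_\Omega(n\alpha+m\beta) < nf_\Omega(\alpha)+mf^*_\Omega(\beta)\) that should be checked to absorb the extra ``\(+\)''.

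There are, however, two genuine gaps. First, in \ref{UOmegae_3} the star of \(x\) is a subset of the whole building, not of \(\Apa\); your generator computation via the fixed-point sets \eqref{eq:AU} only handles the facets of the star that lie in \(\Apa\). You must add the reduction step: for \(y\) in a facet of the star outside \(\Apa\), pick an apartment \(\Apa'\) containing \(x\) and \(y\) and an element \(p\in\PC[x]\) with \(p\Apa'=\Apa\) fixing \(\Apa\cap\Apa'\) pointwise (\cite{Bruhat-Tits:Reductifs_I}*{7.4.9}); normality \ref{UOmegae_2} then transports the problem back into \(\Apa\). Without this, \ref{UOmegae_3} is only proved for the star inside one apartment. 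Second, the key arithmetic claim in your proof of \ref{UOmegae_4} --- that the values \(-\alpha(x)+e\) at the vertices of \(\sigma\) ``occupy consecutive elements of \(\Gamma_\alpha\)'' --- is false in general: a vertex of the polysimplicial complex need not lie on a wall of every family, so \(\alpha(x)\notin\Gamma_\alpha\) is possible (already for type \(C_2\), the vertex \((\tfrac12,\tfrac12)\) and the root \(\alpha=e_1\)). What is true, and what you should use instead, is that \(\alpha(\cl\sigma)\) is contained in a single closed interval \([k,k+n_\alpha^{-1}]\) between consecutive \(\alpha\)-walls; hence every vertex \(x\) gives \(\Un_{\alpha,(-\alpha(x)+e)+}\subseteq\Un_{\alpha,f^*_\sigma(\alpha)+e}\), and when \(\alpha\) is non-constant on \(\sigma\) the vertex \emph{minimising} \(-\alpha\) (which exists and satisfies \(-\alpha(x)<\sup_{y\in\sigma}(-\alpha(y))\)) already supplies the whole group \(\Un_{\alpha,f^*_\sigma(\alpha)+e}\), because no element of \(\Gamma_\alpha\) lies strictly between \(-\alpha(x)+e\) and \(f^*_\sigma(\alpha)+e\). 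With that correction the distribution of root factors to vertices via Proposition~\ref{prop:udp} goes through, and the same interval argument repairs \ref{UOmegae_5}.
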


Since \(\Un_{\alpha,r} = \{1\}\) if and only if \(r = \infty\), \ref{UOmegae_0} follows from Proposition \ref{prop:udp}.  Statements \ref{UOmegae_2} and~\ref{UOmegae_3} show that the order of the product in~\ref{UOmegae_4} does not matter.  The proofs of \ref{UOmegae_1}--\ref{UOmegae_4} and \ref{UOmegae_6}--\ref{UOmegae_7} may be found in \cite{Schneider-Stuhler:Rep_sheaves}*{Section I.2}.  Property~\ref{UOmegae_5} is \cite{Vigneras:Cohomology}*{Proposition 1.1}, whereas~\ref{UOmegae_8} follows from \cite{Bruhat-Tits:Reductifs_I}*{6.4.41}.  Notice that so far these properties hold only for subsets of the standard apartment~\(\Apa\).  However, \ref{UOmegae_2}~allows us to define
\begin{equation}
  \label{eq:UOmegae}
  \UC^{(e)} \defeq g \, \UC[g^{-1} \Omega]^{(e)} \, g^{-1}
\end{equation}
for any non-empty subset~\(\Omega\) of an apartment~\(g \Apa\).  Now Theorem~\ref{thm:UOmegae} holds in the entire building \(\bt{\F}\).

We need one more important property.  We define the hull~\(\Hull (\sigma ,\tau)\) of two polysimplices \(\sigma\) and~\(\tau\) as the intersection of all apartments containing \(\sigma\cup\tau\).  This finite polysimplicial complex is a combinatorial approximation to the closed convex hull of \(\sigma\cup\tau\).  Similarly, we can define the hull \(\Hull(x,z)\) of two arbitrary points \(x,z \in \bt{\F}\).  The proof of \cite{Vigneras:Cohomology}*{Lemma 1.28} yields
\begin{enumerate}[resume,label=\textup{(\alph{*})}]
\item\label{UOmegae_9} If \(x,z \in \bt{\F}\) and \(y\in\Hull(x,z)\), then \(\UC[y]^{(e)} \subseteq \UC[x]^{(e)} \UC[z]^{(e)}\).
\end{enumerate}

The fixed points of the groups~\(\Un_{\alpha,k}\) in the standard apartment are described by \cite{Bruhat-Tits:Reductifs_I}*{7.44}:
\begin{equation}
  \label{eq:AU}
  \begin{aligned}
    \Apa^{\Un_{\alpha ,k}} &=
    \{ x \in \Apa : \inp{x}{\alpha} \ge -k \} , \\
    \Apa^{\Un_{\alpha ,k{+}}} &=
    \{ x \in \Apa : \inp{x}{\alpha} \ge -k - n_\alpha^{-1} \}.
\end{aligned}
\end{equation}
for all \(\alpha \in \Phi\) and \(k \in \Gamma_\alpha\).  Let \(\lfloor r \rfloor_{\Gamma_\alpha}\) for \(r \in \R\) denote the largest element of~\(\Gamma_\alpha\) that is \emph{strictly} smaller than~\(r\).  For \(x \in \Apa\), \eqref{eq:AU}, Proposition~\ref{prop:udp} and Theorem~\ref{thm:UOmegae}.\ref{UOmegae_2} yield
\begin{equation}
  \begin{aligned}
    \Apa^{\UC[x]^{(e)}} &= \{ y \in \Apa :
    \text{\(\inp{y}{\alpha} \ge \lfloor \alpha (x) - e \rfloor_{\Gamma_\alpha}\) for all \(\alpha\in\Phi\)}\},\\
    \bt{\F}^{\UC[x]^{(e)}} &= \PC[x] \cdot \Apa^{\UC[x]^{(e)}}.
  \end{aligned}
\end{equation}

\subsection{The level of representations}
\label{sec:representation}

The system of subgroups \(( \UC[x]^{(e)})_{x \in \bt{\F}^\circ}\) for fixed \(e\in\Z_{\ge0}\) is a ``consistent equivariant system of subgroups'' in the terminology of \cite{Meyer-Solleveld:Resolutions}*{\S\,2.2} because of properties \ref{UOmegae_1}, \ref{UOmegae_4}, and~\ref{UOmegae_9} in Theorem~\ref{thm:UOmegae} and~\eqref{eq:UOmegae}.  The main result of \cite{Meyer-Solleveld:Resolutions}, which was inspired by \cite{Korman:Character}*{Section 7.1}, uses these subgroups to construct resolutions of \(\G\)\nb-representations and suitable subsets thereof.  We now describe this in greater detail.

Let~\(\repr\) be a representation of~\(\G\) on a \(\Z[1/p]\)\nb-module~\(V\), where~\(p\) is the characteristic of the residue field of~\(\F\).  For any polysimplicial subcomplex \(\Sigma \subseteq \bt{\F}\) we define
\[
\Cell[n] (\Sigma; V) \defeq \bigoplus_{\sigma\in\Sigma^n}
V^{\UC[\sigma]^{(e)}} \otimes_\Z \Z\{\sigma\}.
\]
If~\(\tau\) is a face of~\(\sigma\), then \(\UC[\tau]^{(e)} \subseteq \UC[\sigma]^{(e)}\) by Theorem~\ref{thm:UOmegae}.\ref{UOmegae_4} above, so that \(V^{\UC[\tau]^{(e)}} \supseteq V^{\UC[\sigma]^{(e)}}\).  Fix any orientation of \(\bt{\F}\) and declare~\(\sigma\) endowed with the opposite orientation to be equal to \(-\sigma \in \Z\{\sigma\}\).  We define a boundary map
\begin{equation}
  \partial_n\colon \Cell[n](\Sigma; V) \to \Cell[n-1](\Sigma; V),
  \qquad v \otimes \sigma\mapsto v \otimes \partial(\sigma).
\end{equation}
Here \(\partial(\sigma)\) is the usual boundary of~\(\sigma\), a weighted sum of codimension-one faces of~\(\sigma\).  This yields a chain complex \(\bigl(\Cell(\Sigma;V),\partial_*\bigr)\), that is, \(\partial^2= 0\).  We augment it by
\begin{equation}
  \partial_0\colon \Cell[0](\Sigma; V) \to V,\qquad
  v \otimes x \mapsto v.
\end{equation}
If \(g \in \G\) and \(g\cdot\Sigma \subseteq \Sigma\), then~\(g\) acts on \(\Cell(\Sigma ;V)\) by
\[
g \cdot (v\otimes\sigma) = \repr (g) v \otimes g\cdot\sigma ,
\]
where~\(g\cdot\sigma\) is endowed with the orientation coming from~\(\sigma\).

\begin{theorem}[\cite{Meyer-Solleveld:Resolutions}*{Theorem 2.4}]
  \label{thm:resolution}
  Let~\(\Sigma\) be a convex subcomplex of~\(\bt{\F}\), let \(e\in\Z_{\ge 0}\), and let \(\repr\colon \G\to\Aut(V)\) be a representation as above.  Then \(\bigl(\Cell(\Sigma; V), \partial_*\bigr)\) is exact in all positive degrees, and the augmentation map~\(\partial_0\) induces a bijection
  \[
  \Ho_0(\Sigma;V) \cong \sum_{x\in\Sigma^\circ} V^{\UC[x]^{(e)}}.
  \]
\end{theorem}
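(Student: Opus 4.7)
The plan is to establish the theorem by induction on the number of polysimplices in $\Sigma$, using a Mayer--Vietoris reduction, with the base case of a single closed polysimplex handled directly. The essential tools are the product decomposition of $\UC[\sigma]^{(e)}$ from Theorem~\ref{thm:UOmegae}\ref{UOmegae_4}, the hull property \ref{UOmegae_9}, and the invertibility of $p$ in the coefficient ring, which permits averaging over the compact open groups $\UC[x]^{(e)}$ and thereby splits off invariants.

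For the base case $\Sigma = \bar\sigma$ of a single closed polysimplex, property \ref{UOmegae_4} gives $\UC[\tau]^{(e)} \subseteq \UC[\sigma]^{(e)}$ for every face $\tau \subseteq \bar\sigma$, so $V^{\UC[\sigma]^{(e)}} \hookrightarrow V^{\UC[\tau]^{(e)}}$. I would fix a vertex $x_0$ of $\sigma$ and try to construct a contracting homotopy $s$ based on the cone $\tau \mapsto x_0 * \tau$. The na\"ive rule $s(v \otimes \tau) = v \otimes (x_0 * \tau)$ fails, because $v \in V^{\UC[\tau]^{(e)}}$ need not be $\UC[x_0]^{(e)}$-invariant. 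I would repair this by replacing $v$ with the averaging projection $e_{x_0}(v)$ and adding a correction term to absorb $v - e_{x_0}(v)$, relying on the fact that $\UC[x_0]^{(e)}$ normalizes $\UC[\tau]^{(e)}$ (property \ref{UOmegae_2}) so both pieces remain $\UC[\tau]^{(e)}$-invariant, and on $1/|\UC[x_0]^{(e)}| \in \Z[1/p]$.

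For the inductive step, I would peel off a maximal polysimplex $\sigma$ lying on the ``outer boundary'' of $\Sigma$, so that $\Sigma' \defeq \Sigma \setminus \{\sigma\}$ and $\bar\sigma \cap \Sigma'$ are again convex subcomplexes. Such a choice is possible because the retraction of $\bt{\F}$ onto an apartment containing $\Sigma$ induces a shelling order on the polysimplices compatible with convexity. The Mayer--Vietoris short exact sequence
\begin{equation*}
0 \to \Cell(\bar\sigma \cap \Sigma'; V) \to \Cell(\bar\sigma; V) \oplus \Cell(\Sigma'; V) \to \Cell(\Sigma; V) \to 0
\end{equation*}
is exact termwise, since each polysimplex $\tau$ contributes the same invariant subspace $V^{\UC[\tau]^{(e)}}$ on both sides. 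The associated long exact sequence, combined with the base case and the induction hypothesis, yields acyclicity of $\Cell(\Sigma; V)$ in positive degrees and the identification of $H_0$ with $\sum_{x \in \Sigma^\circ} V^{\UC[x]^{(e)}}$ after chasing the augmentation.

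The main obstacle is the base case: reconciling the averaging projector with the chain-level identity $\partial s + s\partial = \mathrm{id}$ is delicate, and it is not obvious a priori that the correction terms telescope correctly across faces. A cleaner route would be to filter $\Cell(\bar\sigma; V)$ by $\UC[\sigma]^{(e)}$-subrepresentations: the subcomplex generated by $V^{\UC[\sigma]^{(e)}}$ is the tensor product of this fixed module with the polysimplicial chain complex of the contractible space $\bar\sigma$, hence acyclic, and one then argues acyclicity of the successive quotients by induction on the codimension of faces, invoking the hull property \ref{UOmegae_9} to split off the ``new'' invariants that appear on each lower-dimensional face.
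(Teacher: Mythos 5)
First, a point of comparison: the paper does not prove this statement at all --- it is quoted from \cite{Meyer-Solleveld:Resolutions}*{Theorem 2.4} --- so your proposal can only be judged on its own terms. Doing so, the Mayer--Vietoris induction does give vanishing of \(\Ho_n(\Sigma;V)\) for \(n\ge 1\) (granting the shelling), since by the induction hypothesis each \(\Ho_0\) embeds into \(V\) and the connecting map out of \(\Ho_0(\bar\sigma\cap\Sigma';V)\) is then visibly injective. The genuine gap is the identification of \(\Ho_0(\Sigma;V)\) with \(\sum_{x\in\Sigma^\circ}V^{\UC[x]^{(e)}}\). Writing \(S_A\defeq\sum_{x\in A^\circ}V^{\UC[x]^{(e)}}\), the long exact sequence gives \(\Ho_0(\Sigma;V)\cong(S_{\bar\sigma}\oplus S_{\Sigma'})\bigm/\{(c,-c):c\in S_{\bar\sigma\cap\Sigma'}\}\), and the augmentation into \(V\) is injective on this quotient if and only if
\[
S_{\bar\sigma}\cap S_{\Sigma'}=S_{\bar\sigma\cap\Sigma'}.
\]
This intersection identity fails for arbitrary families of subspaces and is exactly the global consistency of the idempotents \(\ide{\UC[x]^{(e)}}\) across distant vertices that the hull property~\ref{UOmegae_9} exists to provide; it is Theorem~\ref{thm:supproj} in disguise (\(\bigl(1-u_\Sigma^{(e)}\bigr)\Hec=\bigcap_x\bigl(1-\ide{\UC[x]^{(e)}}\bigr)\Hec\)). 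You list property~\ref{UOmegae_9} among your tools but never deploy it, and your induction hypothesis does not imply the identity. Without it your argument proves acyclicity in positive degrees and surjectivity of \(\Ho_0(\Sigma;V)\to\sum_x V^{\UC[x]^{(e)}}\), but not injectivity.

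Two further problems. A convex subcomplex of \(\bt{\F}\) is in general not contained in any single apartment (take \(\Sigma=\bt{\F}\)), so the proposed shelling order ``induced by the retraction onto an apartment containing \(\Sigma\)'' is not available; even inside one apartment, the existence of a peeling order keeping both \(\Sigma'\) and \(\bar\sigma\cap\Sigma'\) convex at every step is asserted, not proved. And an induction on the number of polysimplices says nothing about infinite convex subcomplexes, which the theorem covers; one needs an exhaustion by finite convex subcomplexes and a colimit argument. By contrast, your base case is sound: by Theorem~\ref{thm:UOmegae}\ref{UOmegae_2} and~\ref{UOmegae_4} the vertex groups of a polysimplex normalise one another, so the idempotents \(\ide{\UC[x]^{(e)}}\) for \(x\in\sigma^\circ\) commute and \(V^{\UC[\tau]^{(e)}}=\bigcap_{x\in\tau^\circ}V^{\UC[x]^{(e)}}\); the augmented complex of \(\bar\sigma\) is then the standard exact Koszul-type complex of commuting idempotents, and your averaging construction is a legitimate way to produce its contracting homotopy.
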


\begin{definition}
  \label{def:level}
  A (smooth) \(\G\)\nb-representation~\(V\) has \emph{level} \(e\in\Z_{\ge 0}\) if
  \[
  V = \sum_{x\in\bt{\F}^\circ} V^{\UC[x]^{(e)}}.
  \]
\end{definition}

This level is similar to the depth of a representation defined by Vign\'eras in \cite{Vigneras:l-modulaires}*{II.5.7}, generalising~\cite{Moy-Prasad:Unrefined}.  More precisely, if~\(V\) is irreducible and \(e\) is the smallest integer such that~\(V\) has level~\(e\), then the depth of~\(V\) lies in \((e-1,e]\).  The category of \(\G\)\nb-representations of level~\(e\) is studied in \cite{Meyer-Solleveld:Resolutions}*{Section~3}.  If~\(V\) is a complex \(\G\)\nb-representation of level~\(e\) and \(\Sigma = \bt{\F}\), then Theorem~\ref{thm:resolution} recovers a result of Schneider and Stuhler \cite{Schneider-Stuhler:Rep_sheaves}*{II.3.1}.  As we will see later, Theorem~\ref{thm:resolution} for finite subcomplexes has independent significance.

Let~\(\Pa\) be a parabolic subgroup of~\(\G\) with unipotent radical~\(\Ru\).  We let
\[
V(\Ru) \defeq \Span \{\repr(g) v - v : g \in \Ru\},\qquad
V_{\Ru} \defeq V \bigm/ V(\Ru).
\]
The representation \((\repr_{\Ru} ,V_{\Ru})\) of \(\Pa\) or~\(\Pa/\Ru\) is called the (unnormalised) \emph{parabolic restriction} of~\(V\).

Let \((\rho ,W)\) be a smooth representation of \(\Pa / \Ru\).  Inflate it to a representation of~\(\Pa\) and construct the smoothly induced \(\G\)\nb-representation \(\Ind_\Pa^\G(W)\).  This is known as the (unnormalised) \emph{parabolic induction} of~\(W\).

\begin{proposition}
  \label{prop:levele}
  Let \(\Pa \subseteq \G\) be a parabolic subgroup.
  \begin{enumerate}[label=\textup{(\alph{*})}]
  \item If~\(V\) is a \(\G\)\nb-representation of level~\(e\), then~\(V_{\Ru}\) is a representation of \(\Pa / \Ru\) of level~\(e\).
  \item If~\(W\) is a representation of \(\Pa / \Ru\) of level~\(e\), then \(\Ind_\Pa^\G(W)\) has level~\(e\).
  \end{enumerate}
\end{proposition}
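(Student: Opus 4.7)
For part~(a), the plan is a direct transfer of the level-$e$ decomposition from $V$ to its Jacquet module. Given $\bar v \in V_\Ru$, lift to some $v \in V$ and use the hypothesis to write $v = \sum_i v_i$ with $v_i \in V^{\UC[x_i]^{(e)}}$ for vertices $x_i \in \bt{\G}{\F}^\circ$. Projecting to $V_\Ru$, the class $\bar v_i$ is fixed by the action through $\Pa/\Ru$ of $\UC[x_i]^{(e)} \cap \Le$, which by Lemma~\ref{lem:Uetildeb} coincides with $\UC[(x_i)_D]^{(e)}$ computed inside $\Le$, where $(x_i)_D \in \Apa[D]$ is the image of $x_i$ under the quotient $\Apa \to \Apa[D]$. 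This image lies in the interior of a unique facet $\sigma_i$ of $\bt{\Le}{\F}$, so Theorem~\ref{thm:UOmegae}\ref{UOmegae_5} yields $\UC[(x_i)_D]^{(e)} = \UC[\sigma_i]^{(e)}$, and Theorem~\ref{thm:UOmegae}\ref{UOmegae_4} exhibits $\UC[z]^{(e)} \subseteq \UC[\sigma_i]^{(e)}$ for every vertex $z$ of $\sigma_i$. Picking any such $z_i$ gives $\bar v_i \in V_\Ru^{\UC[z_i]^{(e)}}$, proving $V_\Ru = \sum_{z} V_\Ru^{\UC[z]^{(e)}}$.

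For part~(b), the plan relies on the Mackey--Iwasawa description of invariants: for each vertex $x \in \bt{\G}{\F}^\circ$,
\[
(\Ind_\Pa^\G W)^{\UC[x]^{(e)}} \cong \bigoplus_{h \in \Pa \backslash \G / \UC[x]^{(e)}} W^{h \UC[x]^{(e)} h^{-1} \cap \Le},
\]
where the action on $W$ factors through $\Le$ since $\Ru$ acts trivially. Given $f \in \Ind_\Pa^\G W$, smoothness provides a compact open $K \subseteq \G$ with $f$ right-$K$-invariant; compactness of $\Pa \backslash \G$ (since $\G/\Pa$ is projective) yields a finite decomposition $\G = \bigsqcup_i \Pa g_i K$ and a writing $f = \sum_i f_i$ with $f_i$ supported on $\Pa g_i K$ and determined by $w_i \defeq f(g_i) \in W^{L_i}$, where $L_i \defeq g_i K g_i^{-1} \cap \Le$. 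Using the level-$e$ hypothesis on $W$, decompose $w_i = \sum_j v_{ij}$ with $v_{ij} \in W^{\UC[y_{ij}]^{(e)}}$ for vertices $y_{ij} \in \bt{\Le}{\F}^\circ$.

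The key algebraic step is, for each pair $(i,j)$, to select a vertex $x_{ij} \in \bt{\G}{\F}^\circ$ and an element $\tilde g_{ij} \in \Pa g_i K$ satisfying $\tilde g_{ij} \UC[x_{ij}]^{(e)} \tilde g_{ij}^{-1} \cap \Le \subseteq \UC[y_{ij}]^{(e)}$. This allows one to construct $\tilde f_{ij} \in (\Ind_\Pa^\G W)^{\UC[x_{ij}]^{(e)}}$ by the Mackey formula above, supported on $\Pa \tilde g_{ij} \UC[x_{ij}]^{(e)}$ and taking value $v_{ij}$ at $\tilde g_{ij}$. The existence of such $x_{ij}$ combines the $\G$-equivariance in~\eqref{eq:UOmegae}, the Iwasawa decomposition $\G = \Pa \cdot \UC[x_0]$ (which lets us move $\tilde g_{ij}$ into a chosen special compact subgroup), and the fact that every vertex of $\Apa[D]$ lifts to a vertex of $\Apa$ along the affine-linear quotient $\Apa \to \Apa[D]$: the fibres of this quotient inherit a polysimplicial structure from $\Apa$ whose $0$-skeleton consists of vertices of $\Apa$.

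The main obstacle is matching supports: $\bigcup_{i,j} \Pa \tilde g_{ij} \UC[x_{ij}]^{(e)}$ typically strictly contains $\bigcup_i \Pa g_i K$, so a naive sum $\sum_{i,j} \tilde f_{ij}$ overshoots $f$. The remedy is to shrink $K$ at the outset to $\UC[\sigma_0]^{(e'')}$ with $e''$ large enough that $K \subseteq \bigcap_{i,j} \UC[x_{ij}]^{(e)}$; this is possible by Theorem~\ref{thm:UOmegae}\ref{UOmegae_7}, since only finitely many $x_{ij}$ appear and these groups form a neighbourhood basis of~$1$. With this choice each $\Pa \tilde g_{ij} \UC[x_{ij}]^{(e)}$ is a union of $K$-cosets in the fixed partition $\G = \bigsqcup_i \Pa g_i K$, and an inclusion--exclusion argument on the finite incidence table between the $\UC[x_{ij}]^{(e)}$-cosets and the $K$-cosets allows one to extract $\Z[1/p]$-coefficients $c_{ij}$ with $f = \sum_{i,j} c_{ij} \tilde f_{ij}$, realising $f$ in $\sum_x (\Ind_\Pa^\G W)^{\UC[x]^{(e)}}$ and completing the proof.
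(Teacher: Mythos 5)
In part~(a) there is a gap where you invoke Lemma~\ref{lem:Uetildeb} for the vertices~\(x_i\).  That lemma identifies \(\UC^{(e)} \cap \Le_D\) with \(\UC[\Omega_D]^{(e)}\) only for \(\Omega\) contained in the standard apartment~\(\Apa\); for a vertex \(x_i \in \bt{\F}^\circ\) lying in no apartment attached to a maximal split torus of~\(\Le_D\) there is no canonical image \((x_i)_D\), and no reason why the image of \(\UC[x_i]^{(e)} \cap \Pa\) in \(\Pa/\Ru\) should contain a group \(\UC[z]^{(e)}\) for a vertex~\(z\) of \(\bt[\Lalg_D]{\F}\) (being open it contains some \(\UC[z]^{(e')}\), but possibly only with \(e'>e\), which is useless here).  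The paper repairs exactly this by first writing \(\bt{\F}^\circ = \Pa_D \Apa^\circ\), using \(\G = \Pa_D \No(\ST) \UC[C]\), so that each summand \(V^{\UC[x_i]^{(e)}}\) becomes \(p\cdot V^{\UC[x]^{(e)}}\) with \(p\in\Pa_D\) and \(x\in\Apa^\circ\); the factor~\(p\) descends to~\(\Le_D\) and Lemma~\ref{lem:Uetildeb} then applies.  Your subsequent adjustment from \((x_i)_D\) to a genuine vertex via Theorem~\ref{thm:UOmegae}.\ref{UOmegae_4} and~\ref{UOmegae_5} is fine once this is fixed.

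The serious problem is in part~(b), at the final ``matching supports'' step.  The \(\tilde f_{ij}\) form a \emph{finite} family, one per pair \((i,j)\), each supported on a single double coset \(\Pa \tilde g_{ij} \UC[x_{ij}]^{(e)}\) and determined there by invariance; such a family spans far too small a space to contain~\(f\).  Already for \(\Le_D\) a torus and \(W\) the trivial character (level~\(0\)), \(\Ind_\Pa^\G W\) contains \(\Ccinf(\Pa\backslash\G)\), each coset \(\Pa g_i K\) contributes a single \(\tilde f_i\) which is a multiple of the indicator of the much coarser set \(\Pa \tilde g_i \UC[x_i]^{(0)}\), and no \(\Z[1/p]\)\nb-linear combination of \(\abs{\Pa\backslash\G/K}\) such coarse indicators reproduces a generic \(K\)\nb-invariant~\(f\): the incidence matrix you propose to invert is degenerate.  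Shrinking~\(K\) does not help (it multiplies the cosets to be matched without refining the \(\tilde f_{ij}\), and is circular because the \(x_{ij}\) depend on~\(K\)).  What is genuinely needed is an infinite family of vertices escaping to infinity in the direction of~\(\Ru\).  That is how the paper argues: reduce to \(W = \Ccinf(\Le_D/\UC[x_D]^{(e)})\), identify \(\Ind_{\Pa_D}^\G W \cong \Ccinf(\G/\Ru[\Pa_D]\UC[x_D]^{(e)})\), and exhibit this as \(\varinjlim_n \Ccinf\bigl(\G/\gamma^n \UC[x]^{(e)}\gamma^{-n}\bigr)\) for a strictly positive central \(\gamma \in \Le_D\), each term being the regular representation on \(\G/\UC[\gamma^n x]^{(e)}\) and hence of level~\(e\).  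A Mackey-type argument might be salvageable, but only by incorporating this limiting sequence of vertices \(\gamma^n x\), which your construction omits.
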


\begin{proof}
  We first establish~(a).  We may assume that \(\Pa = \Pa_D\) is a standard parabolic subgroup.  Then \(\Un^+ \subseteq \Pa_D\).  \cite{Bruhat-Tits:Reductifs_I}*{Proposition 7.3.1} yields \(\G = \Pa_D \No(\ST) \UC[C]\) for any chamber \(C \subseteq \Apa\).  Since~\(\cl C\) is a fundamental domain for the action of~\(\G\) on~\(\bt{\F}\),
  \[
  \bt{\F}^\circ
  = \G \cdot \cl{C}^\circ
  = \Pa_D \No(\ST) \UC[C] \cl{C}^\circ
  = \Pa_D \No(\ST) \cl{C}^\circ
  = \Pa_D \Apa^\circ.
  \]
  The definition of the level and Lemma~\ref{lem:Uetildeb} yield
  \begin{multline*}
    V = \sum_{x \in \bt{\F}^\circ} V^{\UC[x]^{(e)}}
    = \sum_{p \in \Pa_D} \sum_{x \in \Apa^\circ} p \cdot V^{\UC[x]^{(e)}}
    \\\subseteq \sum_{p \in \Pa_D} \sum_{x \in \Apa^\circ}
    p \cdot V^{\UC[x]^{(e)} \cap \Le_D}
    = \sum_{p \in \Pa_D} \sum_{x_D \in \Apa[D]^\circ} p \cdot V^{\UC[x_D]^{(e)}}.
  \end{multline*}
  This implies that~\(V_{\Ru[\Pa_D]}\) has level~\(e\) as well:
  \[
  V_{\Ru[\Pa_D]} = \sum_{p \in \Pa_D} \sum_{x_D \in \Apa[D]^\circ}
  p \cdot V_{\Ru[\Pa_D]}^{\UC[x_D]^{(e)}}
  = \sum_{x_D \in \bt[\Lalg_D]{\F}^\circ}
  V_{\Ru[\Pa_D]}^{\UC[x_D]^{(e)}}.
  \]

  Now we establish~(b).  For notational convenience, we assume that \(\Pa = \Pa_D\) is standard parabolic, so that we may identify \(\Pa / \Ru\) with \(\Le_D = \Lalg_D (\F)\).  A representation of~\(\Le_D\) has level~\(e\) if and only if it is a quotient of a direct sum of copies of the regular representation on \(\Ccinf(\Le_D/\UC[x_D]^{(e)})\) for points~\(x_D\) in the building of~\(\Le_D\); here~\(\Ccinf\) denotes the space of locally constant functions with compact support.  Since Jacquet induction preserves direct sums and quotients, it suffices to prove that the Jacquet induction of \(\Ccinf(\Le_D/\UC[x_D]^{(e)})\) has level~\(e\).  Inspection shows that this Jacquet induction is isomorphic to the regular representation on \(\Ccinf(\G/\Ru[\Pa_D]\UC[x_D]^{(e)})\).

  The subgroup~\(\Ru[\Pa_D]\UC[x_D]^{(e)}\) of~\(\G\) is an inductive limit of compact subgroups because~\(\UC[x_D]^{(e)}\) is compact and~\(\Ru[\Pa_D]\) is unipotent.  It is useful to choose a special sequence of compact subgroups exhausting~\(\Ru[\Pa_D]\), namely,
  \[
  \CG_n \defeq \gamma^n (\UC[x_D]^{(e)}\cap \Ru[\Pa_D])\gamma^{-n},
  \]
  where~\(\gamma\) is a central element of~\(\Le_D\) that is strictly positive, that is, \(\bigcup \CG_n = \Ru[\Pa_D]\).  We also consider the subgroups \(\oppa{\CG}_n \defeq \gamma^n (\UC[x_D]^{(e)}\cap \Ru[\oppa{\Pa}_D])\gamma^{-n}\) in the opposite unipotent group; then \(\bigcap \oppa{\CG}_n = \{1\}\).

  The space \(\Ccinf(\G/\Ru[\Pa_D]\UC[x_D]^{(e)})\) is the coinvariant space for the right action of \(\Ru[\Pa_D]\UC[x_D]^{(e)}\) on \(\Ccinf(\G)\).  This coinvariant space for an increasing union of compact subgroups is the inductive limit
  \[
  \Ccinf(\G/\Ru[\Pa_D]\UC[x_D]^{(e)})
  \cong \varinjlim \Ccinf(\G/\CG_n\UC[x_D]^{(e)})
  \cong \varinjlim \Ccinf\bigl(\G \bigm/ \gamma^n (\UC[x]^{(e)} \cap \Pa_D)\gamma^{-n}\bigr).
  \]
  Here~\(x\) is a pre-image of~\(x_D\) in the building for~\(\G\) for the map in~\eqref{eq:AxD}.  Thus \(\UC[x]^{(e)}\cap \Le_D = \UC[x_D]^{(e)}\) and
  \[
  \UC[x]^{(e)} =
  (\UC[x]^{(e)}\cap \Ru[\Pa_D]) \cdot 
  (\UC[x]^{(e)}\cap \Le_D) \cdot
  (\UC[x]^{(e)}\cap \Ru[\oppa{\Pa}_D]).
  \]
  Any smooth compactly supported function on \(\G/\gamma^n (\UC[x]^{(e)} \cap \Pa_D)\gamma^{-n}\) is invariant under right translation by~\(\oppa{\CG}_m\) for sufficiently large~\(m\) because \(\bigcap \oppa{\CG}_m=1\).  Hence we may rewrite
  \begin{multline*}
    \Ccinf(\G/\Ru[\Pa_D]\UC[x_D]^{(e)})
    \cong \varinjlim_{n,m} \Ccinf\bigl(\G \bigm/ \oppa{\CG}_m\gamma^n (\UC[x]^{(e)} \cap \Pa_D)\gamma^{-n}\bigr)
    \\\cong \varinjlim_{n} \Ccinf\bigl(\G \bigm/ \oppa{\CG}_n\gamma^n (\UC[x]^{(e)} \cap \Pa_D)\gamma^{-n}\bigr)
    \cong \varinjlim_{n} \Ccinf\bigl(\G \bigm/ \gamma^n \UC[x]^{(e)}\gamma^{-n}\bigr).
  \end{multline*}
  Since the regular representations on \(\Ccinf(\G / \gamma^n \UC[x]^{(e)}\gamma^{-n}) \cong \Ccinf(\G / \UC[x]^{(e)})\) have level~\(e\), so has their inductive limit.  Hence \(\Ccinf(\G/\Ru[\Pa_D]\UC[x_D]^{(e)})\) has level~\(e\) as asserted.
\end{proof}

\section{Characters of admissible representations}
\label{sec:admissible}

We define the character of an admissible representation first as a distribution and then describe how to interpret it as a locally constant function on suitable open subsets.  Our discussion is purely algebraic and also works for representations over arbitrary fields whose characteristic is different from the characteristic~\(p\) of the residue field of~\(\F\).

There is a Haar measure~\(\mu\) on~\(\G\) such that \(\mu (\CG) \in \Z [1/p]\) for all compact open subgroups \(\CG \subseteq \G\) by \cite{Meyer-Solleveld:Resolutions}*{Lemma 1.1}.  Let \(\Hec\) be the \(\Z [1/p]\)-module of locally constant functions \(\G \to \Z [1/p]\) with compact support.  Define the convolution product of \(f_1 ,f_2 \in \Hec\) by
\[
(f_1 * f_2) (h) = \int_\G f_1 (g) f_2 (g^{-1} h) \,\diff\mu(g).
\]
We call \(\Hec\) endowed with this multiplication the \emph{Hecke algebra}.  It is an associative idempotented, non-unital \(\Z [1/p]\)-algebra.  Every element of~\(\G\) naturally defines a multiplier of \(\Hec\), but is not contained in \(\Hec\).  Given a pro-\(p\) compact open subgroup \(\CG \subseteq \G\), we let
\[
\ide{\CG} = \mu(\CG)^{-1} 1_\CG \in \Hec
\]
be the corresponding idempotent.

A smooth representation~\(\repr\) of~\(\G\) on a \(\Z[1/p]\)-module~\(V\) becomes a \(\Hec\)-module in a natural way, and we have \(\ide{\CG} V = V^\CG\), the module of \(\CG\)\nb-invariant vectors in~\(V\).  We call an \(\Hec\)-module~\(W\) \emph{smooth} if \(W = \varinjlim {}\ide{\CG} W\), where the limit runs over all pro-\(p\) compact open subgroups~\(\CG\) of~\(\G\).  There is a natural equivalence between the following categories:
\begin{itemize}
\item smooth representations of~\(\G\) on \(\Z [1/p]\)-modules,
\item smooth \(\Hec\)-modules
\end{itemize}
(see \cite{Meyer-Solleveld:Resolutions}*{Proposition 1.3}).  We say that a representation~\(\G\) on a \(\K\)\nb-vector space~\(V\) has \emph{good characteristic} if the characteristic of the field~\(\K\) does not equal~\(p\).

In good characteristic, we may define the algebra \(\Hecke(\G,\K)\), whose smooth modules are in bijection with smooth representations of~\(\G\) on \(\K\)\nb-vector spaces.  Such a representation \((\repr,V)\) is called \emph{admissible} if~\(V^\CG\) has finite dimension for all compact open subgroups \(\CG \subseteq \G\).  An admissible representation in good characteristic gives rise to a distribution
\[
\theta_\repr\colon \Hecke(\G,\K) \to \K,\qquad
f \mapsto\tr(\repr(f),V).
\]

If \(\K=\C\), then Harish-Chandra's Theorem~\ref{thm:HC} shows that this distribution is associated to a locally integrable function, that is, \(\theta_\repr(f) = \int f(g)\cdot \trpi(g)\,\diff\mu(g)\) for all \(f\in\Hecke(\G,\C)\) and a locally integrable function~\(\trpi\).  Furthermore, \(\trpi\) is locally constant on the subset of regular semisimple elements.  Since this subset has full measure, the distribution~\(\theta_\repr\) is determined by the values of~\(\trpi\) on regular semisimple elements.  If~\(V\) has infinite dimension, then~\(\trpi\) is not locally constant near a unipotent element~\(u\) because the closure of the conjugacy class of~\(u\) contains~\(1\) and \(\trpi(1) = \dim V = \infty\).

Since integration requires analysis, the notion of a locally integrable function is unclear for a general field~\(\K\).  The following definition of a character function makes sense for any field~\(\K\):

\begin{definition}
  \label{def:trpi}
  Let \((\repr ,V)\) be an admissible \(\K\)\nb-linear representation of~\(\G\) and let \(g\in\G\).  We write \(\trpi(g)=\tau\in\K\) if there is a compact open subgroup~\(\CG\) such that \(\tr(\repr(f),V) = \tau\cdot \int_\G f(g)\,\diff\mu(g)\) for all \(f\in\Hec\) that are supported in \(\CG g \CG\).
\end{definition}

By definition, the domain of definition \(\dom\trpi\) of~\(\trpi\) is open in~\(\G\), and~\(\trpi\) is locally constant on \(\dom\trpi\).  Moreover, the trace property of~\(\theta_\repr\) forces the function~\(\trpi\) to be a class function, that is, \(\dom\trpi\) is invariant under conjugation and \(\trpi(gxg^{-1}) = \trpi(x)\) for all \(g\in\G\) and \(x\in\dom\trpi\).

In the following sections, we will show that \(\dom\trpi\) contains all regular semisimple elements, and given such an element~\(g\), we will describe a subgroup~\(\CG\) for which~\(\trpi\) is locally constant on \(\CG g\CG\).  We begin with some preparatory results.  First we describe the trace distribution as a limit of locally constant functions and relate the latter to the trace function.

Let~\(\CG\) be a compact open pro-\(p\) subgroup of~\(\G\) (these exist by \cite{Meyer-Solleveld:Resolutions}*{Lemma~1.1}).  Since the space~\(V^\CG\) of \(\CG\)\nb-invariants in~\(V\) is finite-dimensional, the linear operator \(\repr(\ide{\CG}g\ide{\CG})\) has finite rank for all \(g\in \G\).  Hence
\[
\chi_\CG(g) \defeq \tr(\repr(\ide{\CG}g\ide{\CG}),V)
= \mu(\CG)^{-1} \tr(\repr(1_{\CG g}),V)
= \mu(\CG)^{-1} \tr(\repr(1_{g\CG}),V)
\]
defines a \(\CG\)\nb-biinvariant function on~\(\G\); here we used that \(\repr(g\ide{\CG})\), \(\repr(\ide{\CG}g\ide{\CG})\), and \(\repr(\ide{\CG}g)\) have the same trace.  By construction,
\begin{equation}
  \label{eq:trace_as_integral}
  \tr(\repr(f),V) = \int_\G f(g) \chi_\CG(g)\,\diff\mu(g)
\end{equation}
for all \(\CG\)\nb-biinvariant compactly supported functions~\(f\) on~\(\G\).  Let \((\CG_n)_{n\in\N}\) be a decreasing sequence of compact open pro\nb-\(p\) subgroups with \(\bigcap \CG_n= \{1\}\).  Then any locally constant, compactly supported function is \(\CG_n\)\nb-biinvariant for some \(n\in\N\), so that~\eqref{eq:trace_as_integral} holds for \(\CG=\CG_n\) for all sufficiently large~\(n\).  In this sense, the trace distribution is the limit of the locally constant functions~\(\chi_\CG\) in a distributional sense.  The following lemma is trivial:

\begin{lemma}
  \label{lem:trace_exists}
  The trace function exists at \(\gamma\in\G\) and has value~\(\tau\) if and only if there is \(n_0\in\N\) with \(\chi_{\CG_n}(g) = \tau\) for all \(g\in \CG_{n_0}\gamma\CG_{n_0}\) and all \(n\ge n_0\).  Furthermore, then~\(\trpi\) is defined and constant on \(\CG_{n_0}\gamma\CG_{n_0}\).
\end{lemma}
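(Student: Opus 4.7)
The plan is to unpack both directions directly from Definition~\ref{def:trpi} together with the integral representation~\eqref{eq:trace_as_integral}, which already expresses \(\tr(\repr(f),V)\) as the integral of~\(f\) against~\(\chi_\CG\) whenever~\(f\) is \(\CG\)-biinvariant. Two elementary facts do all the work: any locally constant compactly supported~\(f\) becomes \(\CG_n\)-biinvariant once~\(n\) is large enough, and \(\CG_{n_0}\gamma'\CG_{n_0}=\CG_{n_0}\gamma\CG_{n_0}\) for every \(\gamma'\) in this double coset.

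For the ``if'' direction together with the ``furthermore'' clause, I would fix \(\gamma' \in \CG_{n_0}\gamma\CG_{n_0}\) and take any \(f\in\Hec\) supported in \(\CG_{n_0}\gamma'\CG_{n_0}\). By the double-coset identity the support lies in \(\CG_{n_0}\gamma\CG_{n_0}\); choosing \(n\ge n_0\) large enough to make~\(f\) be \(\CG_n\)-biinvariant and applying~\eqref{eq:trace_as_integral} together with the hypothesis \(\chi_{\CG_n}\equiv\tau\) on the support of~\(f\) yields \(\tr(\repr(f),V)=\tau\int_\G f\,\diff\mu\). This is the condition of Definition~\ref{def:trpi} with witness subgroup~\(\CG_{n_0}\), so \(\trpi(\gamma')=\tau\) for every such~\(\gamma'\).

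For the converse, I would start from a witness subgroup~\(\CG\) for \(\trpi(\gamma)=\tau\) and pick \(n_0\) with \(\CG_{n_0}\subseteq \CG\), which exists because the~\(\CG_n\) form a decreasing sequence with trivial intersection. For \(g\in\CG_{n_0}\gamma\CG_{n_0}\) and \(n\ge n_0\), the indicator \(1_{\CG_n g}\) is supported in~\(\CG\gamma\CG\), so Definition~\ref{def:trpi} applied to \(f=1_{\CG_n g}\) gives \(\tr(\repr(1_{\CG_n g}),V)=\tau\cdot\mu(\CG_n g)=\tau\mu(\CG_n)\), whence \(\chi_{\CG_n}(g)=\tau\) after dividing by \(\mu(\CG_n)\).

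The whole argument is definition-chasing, consistent with the author's characterisation of this lemma as trivial. There is no serious obstacle; one only needs to track which double coset each test function is supported in and to invoke at the right moment that any compactly supported locally constant function is eventually biinvariant under the descending family \((\CG_n)\).
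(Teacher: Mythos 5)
Your argument is correct and is exactly the definition-chasing the authors intend: the paper gives no proof at all, declaring the lemma trivial immediately after setting up \eqref{eq:trace_as_integral}, and your two directions (eventual \(\CG_n\)-biinvariance of test functions plus the double-coset identity for the ``if'' part and the furthermore clause; the compactness argument that some \(\CG_{n_0}\) lies inside the witness subgroup \(\CG\), applied to \(f=1_{\CG_n g}\), for the converse) are precisely the omitted steps.
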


Let \(\gamma\in\G\) be a regular semisimple element.  Then~\(\gamma\) is contained in some maximal torus~\(\T\).  Let \(\T^\reg\subseteq\T\) be the subset of regular elements.  It is well-known that the map
\begin{equation}
  \label{eq:psi}
  \psi\colon \G/\T \times \T^\reg \to \G,\qquad
  (gT,t) \mapsto g t g^{-1},
\end{equation}
is open.  We are going to quantify this statement by providing compact open subgroups \(\CG,\CG_\G\subseteq\G\), and \(\CG_\T\subseteq\T\) such that \(\psi(\CG_\G\T\times\CG_\T\gamma)\) contains \(\CG\gamma\CG\) for a given regular element~\(\gamma\) of~\(\T\).  We first consider the split case.

\begin{lemma}
  \label{lem:[gamma]}
  Suppose that~\(\T\) contains the maximal split torus~\(\ST\) of~\(\G\).  Then the map
  \[
  \Un^+ \to \Un^+\colon u \mapsto [u,\gamma]
  \]
  is a diffeomorphism.
\end{lemma}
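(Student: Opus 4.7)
The plan is to verify separately that $\phi(u) \defeq [u,\gamma]$ maps $\Un^+$ into itself, is injective, is surjective, and admits an algebraic inverse. Well-definedness is immediate: since $\gamma \in \T \subseteq \Ce(\ST)$ normalises each root subgroup $\Un_\alpha$ by its defining property from Section~\ref{sec:reductive}, it normalises $\Un^+$, so $\phi(u) = u \cdot (\gamma u^{-1}\gamma^{-1}) \in \Un^+$. For injectivity, if $\phi(u) = \phi(u')$ then $u^{-1}u'$ centralises~$\gamma$; regularity of~$\gamma$ forces its centraliser in~$\G$ to equal the maximal torus~$\T$ itself, and $\T \cap \Un^+ = \{1\}$ because $\Un^+$ is unipotent and~$\T$ is a torus.

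For surjectivity and smoothness I would exploit the descending normal filtration $\Un^+ = \Un^{\geq 1} \supsetneq \Un^{\geq 2} \supsetneq \cdots \supsetneq \{1\}$, where $\Un^{\geq k}$ denotes the subgroup generated by those $\Un_\alpha$ with $\hgt(\alpha) \geq k$. The commutator relations~\eqref{eq:CommutatorGroups} ensure that each $\Un^{\geq k}$ is normal in $\Un^+$ and identify the successive quotient $\Un^{\geq k}/\Un^{\geq k+1}$ with the $\F$-vector space $\bigoplus_{\hgt \alpha = k} \Un_\alpha/\Un_{2\alpha}$. The map $\phi$ preserves this filtration and induces the linear operator $\id - \Ad(\gamma)$ on each quotient. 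Regularity of~$\gamma$ yields $\det(\Ad(\gamma) - \id) = D(\gamma) \neq 0$ on $\Lie(\G)/\Lie(\T) \supseteq \Lie(\Un^+)$, so this operator is invertible on every layer.

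Given $v \in \Un^+$, I would then build its preimage~$u$ layer by layer: first determine $u$ modulo $\Un^{\geq 2}$ by inverting $\id - \Ad(\gamma)$ on the abelian quotient, lift to $\Un^+$ arbitrarily, and iterate, at each stage solving the residual equation $\phi(u u_k) \equiv v \pmod{\Un^{\geq k+1}}$ on the next layer while absorbing the commutator cross-terms produced by the previous lift. Using Proposition~\ref{prop:udp} to coordinatise $\Un^+$ (and each $\Un^{\geq k}$) as a product of $\F$-vector spaces, every step amounts to polynomial operations in the coordinates of~$v$ together with division by fixed nonzero scalars $1 - \alpha(\gamma)$ (or their analogues over the splitting field~$\tilde{\F}$ in the non-split case). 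Consequently $\phi^{-1}$ is algebraic and $\phi$ is a diffeomorphism.

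The main technical obstacle will be controlling those commutator cross-terms carefully at each inductive step and checking, via~\eqref{eq:CommutatorGroups}, that they indeed land in the correct deeper filtration layer so that the same linear inversion continues to apply; once this bookkeeping is in place, the inverse map can be assembled by a finite, explicitly polynomial, procedure.
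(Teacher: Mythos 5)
Your proposal follows essentially the same route as the paper: the descending filtration of \(\Un^+\) by root height, the identification of the graded quotients with \(\F\)\nb-vector spaces on which \(u\mapsto[u,\gamma]\) acts as \(\id-\Ad(\gamma)\) (invertible by regularity of~\(\gamma\)), and the layer-by-layer recursive construction of the preimage, with the commutator relations guaranteeing that the cross-terms fall into the next filtration step. The only cosmetic difference is that you argue injectivity separately via the centraliser of~\(\gamma\), whereas the paper extracts uniqueness directly from the recursion; both are fine.
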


\begin{proof}
For \(\alpha,\beta,\alpha + \beta \in \Phi\cup \{0\}\), we have \([\Un_\alpha , \Un_\beta] \subseteq
\Un_{\alpha + \beta}\), where we interpret~\(\Un_0\) as \(\Ce(\T)\).  Let~\(\Un^{(n)}\) be the
group generated by the~\(\Un_\alpha\) with \(\alpha \in \Phi^+\) of height at least~\(n\).  Then
  \[
  \Un^+ = \Un^{(1)} \supseteq \Un^{(2)} \supseteq \dotsb \supseteq
  \Un^{(\hgt(\Phi))} \supseteq \{1\}
  \]
  is a filtration of~\(\Un^+\) by normal subgroups.  Moreover, as algebraic groups
  \[
  \Un^{(n)} / \Un^{(n+1)}
  \cong \prod_{\alpha \in \Phi^{(n)}} \Un_\alpha / \Un_{2 \alpha}
  \]
  where~\(\Phi^{(n)}\) denotes the set of roots of height~\(n\).  The group \(\Un_\alpha / \Un_{2 \alpha}\) carries a canonical \(\F\)\nb-vector space structure, so we can speak of~\(\lambda u_\alpha\) for \(\lambda \in \F\) and \(u_\alpha \in \Un_\alpha / \Un_{2 \alpha}\).

  Given \(v \in \Un^+\), we recursively construct \(u_n\in\Un^{(n)}\) such that
  \[
  [u_n\dotsm u_2\cdot u_1,\gamma] \in v \Un^{(n+1)}.
  \]
  Then \(u\defeq u_{\hgt(\Phi)} \dotsm u_2 \cdot u_1\) belongs to~\(\Un^+\) and satisfies \([u,\gamma] = v\).  The construction will show that the~\(u_n\) and hence~\(u\) depend algebraically on~\(v\) and that the class of~\(u_n\) in \(\Un^{(n)}/\Un^{(n+1)}\) is unique.  It follows that the map \(u\mapsto [u,\gamma]\) is bijective and that the inverse map is algebraic.

  Let \(w_n \defeq [u_n\dotsm u_2\cdot u_1,\gamma]\) and define \(w_0\defeq 1\).  These elements satisfy the recursive relation
  \begin{multline*}
    w_n = u_n u_{n-1}\dotsm u_1 \gamma (u_{n-1}\dotsm u_1)^{-1} \gamma^{-1}\gamma u_n^{-1}\gamma^{-1}
    \\= w_{n-1} w_{n-1}^{-1} u_n w_{n-1} u_n^{-1} u_n \gamma u_n^{-1} \gamma^{-1}
    = w_{n-1} [w_{n-1}^{-1},u_n] [u_n,\gamma].
  \end{multline*}
  If \(u_n\in\Un^{(n)}\), then \([u_n,\gamma] \in \Un^{(n)}\) and \([w_{n-1}^{-1},u_n]\in \Un^{(n+1)}\) because \([\Un^+,\Un^{(n)}]\subseteq \Un^{(n+1)}\).  Since \(\Un^{(n)}/\Un^{(n+1)}\) is commutative, we have \(w_n \in w_{n-1} [u_n,\gamma] \Un^{(n+1)}\).  Hence~\(u_n\) must solve the equation \([u_n,\gamma] \in w_{n-1}^{-1}v \Un^{(n+1)}\).  As
  \[
  [u_\alpha ,\gamma] = \bigl(1 - \alpha(\gamma)\bigr) u_\alpha
  \qquad\text{for } u_\alpha \in \Un_\alpha / \Un_{2 \alpha},
  \]
  the map \([?,\gamma]\colon \Un^{(n)} / \Un^{(n+1)} \to \Un^{(n)} / \Un^{(n+1)}\) is invertible.  Since \(w_{n-1}^{-1}v\in \Un^{(n)}\) by induction assumption, there is a unique coset \(u_n\Un^{(n+1)}\) with \(w_{n-1}[u_n,\gamma] \Un^{(n+1)} = v\Un^{(n+1)}\), and it depends algebraically on \(w_{n-1}^{-1}v\).  We may pick a representative in this coset by an algebraic map.  If we do this in each step, then the final result~\(u\) depends algebraically on~\(v\) and satisfies \([u,\gamma]=v\).  In each step, there is a unique way of lifting a solution of the equation \([u,\gamma]=v\) from~\(\Un^+/\Un^{(n)}\) to~\(\Un^+/\Un^{(n+1)}\); in the first step, there is a unique solution in \(\Un^+/\Un^{(2)}\).  Hence there is a unique \(u\in\Un^+\) with \([u,\gamma]=v\).
\end{proof}

\begin{proposition}
  \label{prop:conjugatesplit}
  Suppose that the maximal torus~\(\T\) containing~\(\gamma\) is split, so that~\(\G\) is split.  Let~\(\Apa\) be the apartment corresponding to \(\ST=\T\), let \(x \in \Apa\), and let \(r\in\R_{\geq \sd(\gamma)} \).  Then the map~\(\psi\) in~\eqref{eq:psi} restricts to an injective map from \((\UC[x]^{(0)} / H_{0{+}}) \times H_{r{+}} \gamma\) onto a neighbourhood of~\(\gamma\) that contains \(\UC[x]^{(r)}\gamma\).
\end{proposition}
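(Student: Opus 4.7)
The plan is to verify three properties of the restricted map: well-definedness, injectivity, and that its image contains $\UC[x]^{(r)}\gamma$ (which, being open, then also gives the ``neighbourhood of~$\gamma$'' conclusion). First I check that each $h\gamma$ with $h \in H_{r+}$ is regular and has centraliser equal to $\T$ in $\G$. Writing $\alpha(h\gamma) - 1 = \alpha(\gamma)(\alpha(h)-1) + (\alpha(\gamma)-1)$, the inequality $v(\alpha(h)-1) > r \ge \sd(\gamma) \ge \sd_\alpha(\gamma) = v(\alpha(\gamma)-1)$ forces the two summands to have strictly different valuations, so $\sd_\alpha(h\gamma) = \sd_\alpha(\gamma) < \infty$ for every root~$\alpha$. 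Proposition~\ref{prop:udp} at $e=0$ gives $\UC[x]^{(0)} \cap \No(\ST) = H_{0+}$, which identifies $\UC[x]^{(0)}/H_{0+}$ with a subset of $\G/\T$; the conjugation map descends to this quotient because $H_{0+} \subseteq \T$ centralises $H_{r+}\gamma$.

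Injectivity follows from the centraliser structure: if $u_1 (h_1\gamma) u_1^{-1} = u_2(h_2\gamma)u_2^{-1}$, then $v \defeq u_2^{-1} u_1 \in \UC[x]^{(0)}$ conjugates the regular element $h_1\gamma \in \T$ to $h_2\gamma \in \T$. Since both elements have centraliser~$\T$, conjugation by $v$ sends $\T$ to $\T$, so $v \in \UC[x]^{(0)} \cap \No(\ST) = H_{0+}$. Hence $u_1 H_{0+} = u_2 H_{0+}$, and the commutativity of~$\T$ gives $h_1\gamma = h_2\gamma$.

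For the image, I would apply a quantitative $p$-adic inverse function argument to the map $F(u_+, u_-, h) \defeq u_+ u_-(h\gamma)u_-^{-1}u_+^{-1}\gamma^{-1}$ from $(\UC[x]^{(0)} \cap \Un^+) \times (\UC[x]^{(0)} \cap \Un^-) \times H_{r+}$ to $\G$, where $(u_+, u_-)$ represent the $\Un^\pm$ factors in the Proposition~\ref{prop:udp} decomposition of $u \in \UC[x]^{(0)}/H_{0+}$. A direct computation (using that $\Ad(\gamma)Y = Y$ for $Y \in \Lie(\T)$) gives the tangent map at $(1,1,1)$ as $(X_+, X_-, Y) \mapsto (1-\Ad(\gamma))X_+ + (1-\Ad(\gamma))X_- + Y$, an isomorphism onto $\Lie(\G)$ whose inverse shifts the filtration by $-\sd_\alpha(\gamma)$ on each $\Lie(\Un_\alpha)$ and is the identity on $\Lie(\T)$. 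Since $r \ge \sd(\gamma) \ge \sd_\alpha(\gamma)$, successive approximation starting from any target $w \in \UC[x]^{(r)}$ yields iterates that remain in $(\UC[x]^{(0)} \cap \Un^+) \times (\UC[x]^{(0)} \cap \Un^-) \times H_{r+}$, with corrections shrinking at each step.

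The hard part is controlling the higher-order terms in $F$: its expansion produces cross-terms such as the conjugate of $[u_-, h\gamma] \in \Un^-$ by $u_+ \in \Un^+$, which leave the natural $\Un^+ \cdot \T \cdot \Un^-$ decomposition. The cleanest way to handle this is to mimic the height-induction argument of Lemma~\ref{lem:[gamma]}: organise all computations by the height filtration on $\Un^\pm$ so that at each height the cross-terms contribute only at strictly greater heights, letting the commutator estimates~\eqref{eq:CommutatorGroups} together with the linear bound on the inverse differential close the induction.
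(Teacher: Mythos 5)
Your injectivity argument and your first paragraph (regularity of $h\gamma$, identification of $\UC[x]^{(0)}/H_{0+}$ with a subset of $\G/\T$) match the paper's proof. The surjectivity step is also attacked by the same basic mechanism the paper uses — successive approximation built on the invertibility of $u\mapsto[u,\gamma]$ — but the place where you locate the difficulty is exactly where your plan breaks down. You propose to control the cross-terms (e.g.\ the conjugate of $[u_-,h\gamma]\in\Un^-$ by $u_+\in\Un^+$) by ``organising all computations by the height filtration on $\Un^\pm$ so that at each height the cross-terms contribute only at strictly greater heights.'' That is false for commutators between \emph{opposite} unipotent subgroups: by \eqref{eq:CommutatorGroups}, $[\Un_{\alpha,s},\Un_{-\beta,t}]$ lands in root groups for $n\alpha-m\beta$, and for $\alpha,\beta\in\Phi^+$ these roots can have smaller height (in absolute value) than either input — already in type $A_2$ one has contributions of $[\Un_{\alpha+\beta},\Un_{-\beta}]$ to $\Un_\alpha$. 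The height induction of Lemma~\ref{lem:[gamma]} closes only because all roots there are positive, so sums of them strictly gain height; it does not extend to the mixed-sign situation.

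What actually makes the iteration converge is the \emph{depth} parameter of the filtration $\UC[x]^{(s)}$, not root height. The paper writes $y=y_+y_-y_0$ with $y_\pm\in\Un^\pm\cap\UC[x]^{(r)}$, $y_0\in H_{r+}\gamma$, solves $[u_\pm,y_0]=y_\pm$ exactly by Lemma~\ref{lem:[gamma]} (so $u_\pm\in\UC[x]^{(r-\sd(\gamma))}\subseteq\UC[x]^{(0)}$), and then computes $\psi(u_-u_+,y_0)=[u_-,y_+]\,y$. The single error term $[u_-,y_+]$ is controlled by Theorem~\ref{thm:UOmegae}.\ref{UOmegae_8}: since the filtration is discrete, one can choose $\epsilon>0$ with $\UC[x]^{(\epsilon)}=\UC[x]^{(0)}$ and conclude $[u_-,y_+]\in\UC[x]^{(r+\epsilon)}\subseteq\UC[x]^{(r')}$, where $r'>r$ is the next jump of the filtration. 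The problem is thereby reduced to the identical problem with $r$ replaced by $r'$, and since the jumps lie in a discrete set and $\bigcap_s\UC[x]^{(s)}=\{1\}$, the iteration converges. If you replace your height bookkeeping by this depth bookkeeping — measuring the error by which $\UC[x]^{(s)}$ it lies in and using the inclusion $[\UC[x]^{(s)},\UC[x]^{(t)}]\subseteq\UC[x]^{(s+t)}$ — your successive-approximation scheme goes through; as written, the induction does not close.
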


\begin{proof}
  First we prove injectivity on the indicated domain.  Assume \(\psi(g_1\T,t_1) = \psi(g_2\T,t_2)\).  Then \(g_2^{-1} g_1 t_1 g_1^{-1} g_2 = t_2 \in \T\).  Since~\(t_1\) is regular, this implies \(g_2^{-1} g_1 \in \No(\T)\).  But \(\No(\T) \cap \UC[x]^{(0)} = \Ce(\T) = \T\), so that \(g_1\T = g_2\T\) and therefore \(t_1=t_2\).

  Since~\(\Galg\) splits, the definition~\eqref{eq:Hsplit} yields \(H_{r{+}} \subseteq \T\).  As \(\psi (u ,h\gamma ) = [u, h\gamma ] h \gamma\), Lemma \ref{lem:[gamma]} shows that \(\psi (\G/\T \times H_{r{+}} \gamma)\) contains \(\Un^+ H_{r{+}} \gamma\) for any positive system \(\Phi^+ \subset \Phi\).  We may decompose any element of \(\UC[x]^{(r)} \gamma\) as \(y = y_+\cdot y_-\cdot y_0\) with \(y_\pm \in \Un^\pm \cap \UC[x]^{(r)}\) and \(y_0\in H_{r{+}} \gamma\).  There are \(u_+ \in \Un_+\) and \(u_- \in \Un_-\) such that
  \[
  y_+ y_0 = u_+ y_0 u_+^{-1} \quad\text{and}\quad
  y_- y_0 = u_- y_0 u_-^{-1}.
  \]
  Now \(\sd(y_0) = \sd(\gamma) \ge 0\) and \([u_+, y_0] = y_+ \in \UC[x]^{(r)}\) force \(u_+ \in \UC[x]^{(r - \sd(\gamma))} \subseteq \UC[x]^{(0)}\).  For the same reason, \(u_- \in \UC[x]^{(r - \sd(\gamma))}\).  A good approximation for \(\psi^{-1}(y)\) is \((u_- u_+, y_0)\):
  \begin{multline}
    \label{eq:psiu}
    \psi (u_- u_+, y_0) = u_- u_+ y_0 u_+^{-1} u_-^{-1}
    = u_- y_+ y_0 u_-^{-1}
    \\= u_- y_+ u_-^{-1} y_- y_0
    = [u_- ,y_+] y_+ y_- y_0
    = [u_- ,y_+] y.
  \end{multline}
  Theorem~\ref{thm:UOmegae}.\ref{UOmegae_8} yields
  \[
  [u_- ,y_+] \in [\UC[x]^{(r - \sd(\gamma))}, \UC[x]^{(r)}]
  \subseteq \UC[x]^{(2r - \sd(\gamma))} ,
  \]
  but we can be more precise.  Let \(r' > r\) the smallest number with \(\UC[x]^{(r')} \neq \UC[x]^{(r)}\).  Choose \(\epsilon \in (0, r'-r)\) such that \(\UC[x]^{(\epsilon)} = \UC[x]^{(0)}\) (this is possible because the filtrations \eqref{eq:Ualphar} and~\eqref{eq:Hr} are discrete).  Now Theorem~\ref{thm:UOmegae}.\ref{UOmegae_8} yields
  \[
  [u_- ,y_+] \in \UC[x]^{(r')}.
  \]
  In other words, \(\psi (u_- u_+, y_0) = y\) in \(\PC[x]/ \UC[x]^{(r')}\).

  Next we try to find a solution of the form \(\psi (u_- u_+ g, t y_0) = y\).  By~\eqref{eq:psiu} this is equivalent to
  \[
  \psi (g, t y_0) = u_+^{-1} u_-^{-1} y u_- u_+
  =  (u_- u_+)^{-1} [y_+,u_-] (u_- u_+) \, y_0.
  \]
  Since \(u_- u_+ \in \UC[x]^{(0)} \subseteq \No\bigl( \UC[x]^{(r')} \bigr)\), the right hand side lies in \(\UC[x]^{(r')} y_0\).  Thus we transformed the original problem
  \[
  \psi \bigl( \UC[x]^{(0)} / H_{0+} \times H_{r{+}} \gamma \bigr) \supseteq \UC[x]^{(r)} \gamma
  \]
  to the problem
  \[
  \psi \bigl( \UC[x]^{(0)} / H_{0+} \times H_{r{+}} y_0 \bigr) \supseteq \UC[x]^{(r')} y_0.
  \]
  Since \(H_{r{+}}\gamma = H_{r{+}} y_0\), \(r'>r\) and
  \[
  \UC[x]^{(r')} y_0 \subseteq \UC[x]^{(r')} H_{r{+}} \gamma \subsetneq \UC[x]^{(r)} \gamma,
  \]
  repetition of this process yields a solution \(\psi^{-1}(y)\).
\end{proof}

Now we consider a regular element~\(\gamma\) of a non-split maximal torus \(\T = \Talg (\F)\).  Furthermore, we want to generalise the statement by allowing the choice of an arbitrary \(x\in \Apa\).  Let~\(\tilde{\F}\) be a splitting field of~\(\Talg\), let \(\tilde{\G}=\Galg(\tilde{\F})\), and let \(\tilde{\T} \defeq \Talg(\tilde{\F})\).  This is a split maximal torus in~\(\tilde{\G}\), which therefore corresponds to an apartment~\(\tApa{\tilde{\T}}\) in the building \(\bt{\tilde{\F}}\).  Recall the subgroups \(\tilde{H}_r \subseteq \Calg[\Galg(\tilde{\F})]\bigl(\Talg(\tilde{\F})\bigr)\).

For \(x \in \bt{\F}\), let~\(\piTx\) be the point of~\(\tApa{\tilde{\T}}\) that is nearest to~\(x\).  Let~\(\Psi\) be the root system corresponding to an apartment of \(\bt{\tilde{\F}}\) that contains \(x\) and~\(\piTx\).  We define
\begin{equation}
  \label{eq:dTx}
  d_\T (x) \defeq \max_{\beta \in \Psi^\red}  \abs{\beta(\piTx) - \beta(x)}.
\end{equation}
If~\(\tilde{\F}/\F\) is tamely ramified, then~\eqref{eq:Rousseau} shows that \(\tApa{\tilde{\T}} \cap \bt{\F}\) is non-empty, that is, there is~\(x\) with \(d_\T(x)=0\).

Alternatively, let \(\tilde{C}\subseteq \tApa{\tilde{\T}} \) be a chamber containing~\(\piTx\), let \(\rho_{\tApa{\tilde{\T}} ,\tilde{C}}\colon \bt{\tilde{\F}} \to \tApa{\tilde{\T}} \) be the associated retraction.  Then
\[
d_\T (x) = \max_{\alpha \in \tilde{\Phi}^\red}
\abs{\alpha(\piTx) - \alpha(\rho_{\tApa{\tilde{\T}} ,\tilde{C}}(x))}.
\]
Proposition~\ref{prop:fixpoints}.(c) yields
\begin{equation}\label{eq:dTxgamma}
d_\T (x) \leq \hgt(\Phi) \sd(\gamma)\qquad \text{for all } x \in \bt{\tilde \F}^\gamma .
\end{equation}
Lemma~\ref{lem:Uetildea} and~\eqref{eq:f*Omega} yield
\begin{equation}
  \label{eq:UdTx}
  \UC[x]^{(r + d_\T(x))} = \tUC[x]^{(r + d_\T(x))} \cap \Galg(\F)
  \subseteq \tUC[\piTx]^{(r)} \cap \UC[x].
\end{equation}

\begin{lemma}
  \label{lem:conjugatedTx}
  Let \(\gamma \in \T\) be regular and let \(r\in\R_{\ge \sd(\gamma)}\).  Let \(x \in \bt{\F}\) and abbreviate \(\Kx = \tUC[\piTx]^{(0)}\cap\G\).  Then \(\UC[x]^{(r+d_\T(x))} \gamma\) is contained in \(\psi\bigl( \Kx \times (\tilde{H}_{r{+}} \gamma \cap \T)\bigr)\).
\end{lemma}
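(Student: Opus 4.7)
The plan is to perform Galois descent from a splitting field $\tilde{\F}$ of $\Talg$, using Proposition~\ref{prop:conjugatesplit} inside the split group $\tilde{\G} = \Galg(\tilde{\F})$. First, by~\eqref{eq:UdTx} every $y \in \UC[x]^{(r+d_\T(x))}\gamma$ lies in $\tUC[\piTx]^{(r)}\gamma$. Since $\tilde{\T} = \Talg(\tilde{\F})$ is a maximal $\tilde{\F}$-split torus in $\tilde{\G}$ and $\piTx$ belongs to its apartment $\tApa{\tilde{\T}}$, Proposition~\ref{prop:conjugatesplit} applied to $(\tilde{\G},\tilde{\T},\piTx)$ yields $\tilde{g} \in \tUC[\piTx]^{(0)}$ and $\tilde{t} \in \tilde{H}_{r+}\gamma \subseteq \tilde{\T}$ with $y = \tilde{g}\,\tilde{t}\,\tilde{g}^{-1}$, and the pair $(\tilde{g}\,\tilde{H}_{0+},\tilde{t})$ is uniquely determined.

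I would then descend. Let $\Gamma \defeq \{\sigma \in \Aut(\tilde{\F}) : \sigma|_\F = \id_\F\}$. Because $x \in \bt{\F} \subseteq \bt{\tilde{\F}}^\Gamma$, the apartment $\tApa{\tilde{\T}}$ is $\Gamma$-stable, and $\Gamma$ acts by isometries, the nearest-point $\piTx$ is itself $\Gamma$-fixed. Hence both $\tUC[\piTx]^{(0)}$ and $\tilde{H}_{r+}\gamma$ are $\Gamma$-stable. Applying $\sigma \in \Gamma$ to $y = \tilde{g}\,\tilde{t}\,\tilde{g}^{-1}$ produces a second preimage of $y$ in the same product, so the uniqueness clause of Proposition~\ref{prop:conjugatesplit} forces $\sigma(\tilde{t}) = \tilde{t}$ and $\sigma(\tilde{g}) \in \tilde{g}\,\tilde{H}_{0+}$. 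The first relation puts $\tilde{t}$ in $\Talg(\tilde{\F})^\Gamma = \T$, so $\tilde{t} \in \tilde{H}_{r+}\gamma \cap \T$, which is half of what is needed.

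For the other half I need to upgrade $\tilde{g}$ to some $g \in \Kx = \tUC[\piTx]^{(0)} \cap \G$ satisfying $g\,\tilde{t}\,g^{-1} = y$. Since $\tilde{t}$ is regular in $\tilde{\G}$, its centraliser is $\tilde{\T}$, so any such $g$ must have the form $g = \tilde{g}\,\tilde{s}$ with $\tilde{s} \in \tilde{\T}$, and demanding $g \in \tUC[\piTx]^{(0)}$ further pins $\tilde{s}$ down to $\tilde{\T} \cap \tUC[\piTx]^{(0)} = \tilde{H}_{0+}$ (Proposition~\ref{prop:udp}). The assignment $c(\sigma) \defeq \tilde{g}^{-1}\sigma(\tilde{g})$ is a $1$-cocycle $\Gamma \to \tilde{H}_{0+}$, and the requirement $\sigma(g) = g$ for all $\sigma$ is exactly the condition that $c$ be a coboundary in $\tilde{H}_{0+}$, namely $c(\sigma) = \tilde{s}\,\sigma(\tilde{s})^{-1}$.

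The crux, and the hardest step, is thus showing that this cocycle is a coboundary within $\tilde{H}_{0+}$. Classical Hilbert~90 gives $\Ho^1(\Gamma,\tilde{\T}) = 1$ because $\tilde{\T}$ is $\tilde{\F}$-split, so $c$ is automatically a coboundary in $\tilde{\T}$; the delicate point is to refine the trivialising element down into $\tilde{H}_{0+}$. I would attack this through the short exact sequence $1 \to \tilde{H}_{0+} \to \tilde{\T} \to \tilde{\T}/\tilde{H}_{0+} \to 1$, combining the fact that $\tilde{H}_{0+}$ is pro-$p$ (so averaging is available for the prime-to-$p$ part of $\Gamma$) with the specific shape of $c$ coming from the unique decomposition of $\tilde{g}$ via Proposition~\ref{prop:udp} and the $\Gamma$-invariance of the target $y$, in order to force the image of $c$ in $\Ho^1(\Gamma,\tilde{H}_{0+})$ to vanish.
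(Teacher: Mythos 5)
Your reduction to the split group $\Galg(\tilde{\F})$ and the descent through the injectivity clause of Proposition~\ref{prop:conjugatesplit} is exactly the route the paper takes: its proof also passes through~\eqref{eq:UdTx}, applies Proposition~\ref{prop:conjugatesplit} to $\bigl(\Galg(\tilde{\F}),\Talg(\tilde{\F}),\piTx\bigr)$, and then identifies the $\F$\nb-points of the image using the injectivity of $\tilde{\psi}$ on $\tUC[\piTx]^{(0)}/\tilde{H}_{0{+}}\times(\tilde{H}_{r{+}}\gamma\cap\Talg(\tilde{\F}))$ together with $\Gamma$\nb-equivariance. Up to and including the conclusions $\sigma(\tilde{t})=\tilde{t}$ (hence $\tilde{t}\in\T$) and $\sigma(\tilde{g})\in\tilde{g}\tilde{H}_{0{+}}$, your argument is correct and matches the paper.

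The last step, however, is where the real content sits, and you do not prove it: you only announce a plan of attack. What is needed is that the $\Gamma$\nb-stable coset $\tilde{g}\tilde{H}_{0{+}}$ contains a $\Gamma$\nb-fixed element, i.e.\ that your cocycle $c$ dies in $\Ho^1(\Gamma,\tilde{H}_{0{+}})$. Moreover, the intermediate appeal to Hilbert~90 is incorrect: $\Ho^1\bigl(\Gamma,\Talg(\tilde{\F})\bigr)$ does not vanish merely because $\Talg$ splits over $\tilde{\F}$ --- the $\Gamma$\nb-action on $\Talg(\tilde{\F})\cong(\tilde{\F}^\times)^{\dim\Talg}$ is twisted through $X_*(\Talg)$, and already for the norm-one torus of a quadratic extension one gets $\Ho^1\cong\F^\times/N_{\tilde{\F}/\F}(\tilde{\F}^\times)\neq 1$. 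So the ``automatic coboundary in $\tilde{\T}$'' you take as a starting point is not available, and the refinement into $\tilde{H}_{0{+}}$ has to stand on its own. The usable observation is the one you gesture at: $\tilde{H}_{0{+}}$ is an abelian pro\nb-$p$ group, $\Ho^1(\Gamma,\tilde{H}_{0{+}})$ is killed by $\abs{\Gamma}$, and multiplication by an integer prime to $p$ is invertible on $\tilde{H}_{0{+}}$; hence the obstruction vanishes whenever $\tilde{\F}/\F$ is tamely ramified. In the wildly ramified case your sketch (``the specific shape of $c$'') supplies no argument, and the vanishing is not formal. To be fair, the paper compresses this descent into the bare assertion that $\tilde{\psi}$ is injective and $\psi$ is open on the relevant domains, so you have at least located precisely where the difficulty lies --- but locating it is not the same as resolving it, and as written the proposal is incomplete.
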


\begin{proof}
  Equation~\eqref{eq:UdTx} and Proposition~\ref{prop:conjugatesplit} show that every element of \(\tUC[x]^{(r+d_\T(x))} \gamma\) is conjugate in \(\Galg(\tilde{\F})\) to an element of \(\tilde{H}_{r{+}} \gamma \cap \Talg(\tilde{\F})\).  Since the maps
  \[
  \tilde{\psi}\colon \bigl(\Galg (\tilde{\F}) / \Talg(\tilde{\F})\bigr)
  \times \Talg(\tilde{\F}) \to \Galg(\tilde{\F})
  \quad \text{and} \quad
  \psi\colon \bigl(\Galg (\F) / \Talg(\F)\bigr) \times \Talg(\F) \to \Galg(\F)
  \]
  are injective and open, respectively on \(\tUC[\piTx]^{(0)}/\tilde{H}_{0{+}} \times \bigl(\tilde{H}_{r{+}} \gamma \cap \Talg(\tilde{\F}\bigr))\) and on the intersection of this set with~\(\G\),
  \[
  \tilde{\psi} \bigl( \Kx \times (\tilde{H}_{r{+}} \gamma \cap \T) \bigr) =
  \tilde{\psi} \bigl( \tilde{\Un}_{\piTx}^{(0)}
  \times (\tilde{H}_{r{+}} \gamma \cap \Talg(\tilde{\F})) \bigr) \cap \G.
  \]
  Moreover, by Proposition~\ref{prop:conjugatesplit} the right hand side contains
  \begin{equation}
    \tilde{\Un}_{\piTx}^{(r)} \gamma \cap \G \supseteq
    \tUC[x]^{(r + d_\T (x))} \gamma \cap \G = \UC[x]^{(r + d_\T (x))} \gamma.\qedhere
  \end{equation}
\end{proof}

There is a decreasing sequence \((\CG_n)_{n\in\N}\) of \emph{normal} compact open subgroups in~\(\Kx\) with \(\bigcap \CG_n = \{1\}\).  Since~\(\Kx\) is open in~\(\G\), we may use this sequence to approximate the trace distribution as in~\eqref{eq:trace_as_integral}.  Since~\(\CG_n\) is normal in~\(\Kx\), then the space of \(\CG_n\)\nb-biinvariant functions is invariant under conjugation by elements of~\(\Kx\).  This implies that the function~\(\chi_{\CG_n}\) is invariant under conjugation by elements of~\(\Kx\).  Therefore, Lemma~\ref{lem:conjugatedTx} shows that~\(\chi_{\CG_n}\) is constant on \(\UC[x]^{(r+d_\T(x))}\gamma\) once it is constant on \(\tilde{H}_{r{+}}\gamma\cap\T\).  In the following, we may therefore restrict attention to elements of a torus in~\(\G\).

\section{The local constancy of characters}
\label{sec:constancycpt}

Let \((\repr,V)\) be an admissible representation of~\(\G\) in good characteristic, of level \(e \in \Z_{\ge 0}\).  Let~\(\gamma\) be a regular semisimple element of a maximal torus \(\T \subseteq \G\) and let \(x\in\bt{\F}^\circ\) be a vertex in the building of~\(\G\).  We are going to find \(\rmax\in\N\) depending only on~\(\gamma\) and the level~\(e\) of the representation, such that \(\trpi\) is defined and constant on \(\UC[x]^{(\rmax+d_\T(x))}\) with \(d_\T(x)\) as in~\eqref{eq:dTx}.

\subsection{Local constancy for compact elements}
First we assume, in addition, that~\(\gamma\) is a compact element, so that~\(\gamma\) fixes some point in the affine building.  The assertions for general elements are reduced to the compact case in Section~\ref{sec:constancygen}.

Our definition of~\(\rmax\) is somewhat complicated and probably not optimal.  It is likely that \(\rmax = \max\{\sd(\gamma),e\}\) works, but we can only prove this if~\(\T\) has a subtorus~\(\ST\) that is a maximal \(\F\)\nb-split torus of~\(\G\).

Let \(\T = \Talg (\F) \subseteq \G\) be a maximal torus containing~\(\gamma\) and let~\(\tilde{\F} / \F\)
be a finite Galois extension splitting \(\Talg\).  Recall the subgroups 
\(\tilde{\Un}^+ \subset \Galg (\tilde{\F})\) and 
\(\tilde{H}_r \subseteq \Calg[\Galg(\tilde{\F})]\bigl(\Talg(\tilde{\F})\bigr)\).  
Let~\(\tilde{B}\) be a Borel subgroup of \(\Galg (\tilde{\F})\) containing \(\Talg(\tilde{\F})\)
and let $\tilde{\Un}^+_T$ be its unipotent radical. We define $\Un^+_T = \tilde{\Un}^+_T \cap G$.
Since $\tilde B$ need not be defined over $\F$, $T \ltimes \Un^+_T$ need not be a Borel subgroup
of $G$. In fact $\Un^+_T$ can be reduced to $\{1\}$, for example if $T$ is anisotropic.

\begin{definition}
  \label{def:rmax}
  For \(x\in\bt{\F}\) define \(d_\T (x)\) as in~\eqref{eq:dTx} and let \(d(\gamma) \in \R\) be the smallest number such that
  \begin{equation}
    \label{eq:dgamma}
    \bt{\F}^\gamma \subseteq \Un^+_T \cdot \{ x \in \bt{\F} : d_\T (x) \le d(\gamma)\}.
  \end{equation}
\end{definition}

We have \(d(\gamma)<\infty\) because \(\bt{\F}^\gamma/\T\) is compact.

\begin{theorem}
  \label{thm:localconst}
  Define \(\rmax \defeq \max \{ \hgt(\Phi) \sd(\gamma), e+d(\gamma)\}\).
  \begin{enumerate}[label=\textup{(\alph{*})}]
  \item The function \(\trpi\) is defined and constant on \(\gamma \tilde{H}_{\rmax+} \cap \T\), and on all \(\G\)-conjugacy classes intersecting this set.
  \item The function \(\trpi\) is constant on \(\UC[x]^{(\rmax + d_\T (x))}\gamma\), for any \(x \in \bt{\F}\).
  \item If~\(\T\) has a subtorus~\(\ST\) that is a maximal \(\F\)\nb-split torus of~\(\G\), then \(d(\gamma) =0\) and we may omit the factor \(\hgt (\Phi)\) in the definition of~\(\rmax\), that is, \(\trpi\) is constant on \(\gamma \tilde{H}_{\max \{ \sd(\gamma), e\}+ } \cap \T\).
  \end{enumerate}
\end{theorem}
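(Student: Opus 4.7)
The plan is to prove the parts in the order (a), (b), (c), relying as the central tool on the character formula from \cite{Meyer-Solleveld:Resolutions}: given a $g$-invariant finite convex subcomplex $\Sigma \subseteq \bt{\F}$ containing $\bt{\F}^g$, the coefficient system $\sigma \mapsto V^{\UC[\sigma]^{(e)}}$ of Theorem~\ref{thm:resolution} yields a chain complex of finite-dimensional spaces whose Lefschetz number expresses $\trpi(g)$ as an alternating sum $\sum_\sigma (-1)^{\dim\sigma} \tr(g, V^{\UC[\sigma]^{(e)}})$ over $g$-orbits of polysimplices in $\Sigma$.

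For (a), I would fix $h \in \tilde H_{\rmax+} \cap \T$ and aim to show $\trpi(\gamma h) = \trpi(\gamma)$. Since $\rmax \ge \hgt(\Phi) \sd(\gamma)$, Lemma~\ref{lem:constantfix} gives $\bt{\F}^{\gamma h} = \bt{\F}^\gamma$, so $\gamma$ and $\gamma h$ fix the same polysimplices; and because $\bt{\F}^\gamma/\T$ is compact, I can choose a $\gamma$-invariant finite convex subcomplex $\Sigma$ on which the character formula computes both $\trpi(\gamma)$ and $\trpi(\gamma h)$. The main step will be the inclusion $h \in \UC[\sigma]^{(e)}$ for every $\sigma \subseteq \bt{\F}^\gamma$, which ensures that $h$ acts trivially on $V^{\UC[\sigma]^{(e)}}$ and the two alternating sums agree termwise. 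Using \eqref{eq:dgamma} I may write $\sigma = b\sigma'$ with $b \in \tilde B$ and $d_\T \le d(\gamma)$ on $\sigma'$; combining this with Lemma~\ref{lem:Uetildea} and \eqref{eq:UdTx} should yield $\tilde H_{(e + d(\gamma))+} \cap \T \subseteq \UC[\sigma]^{(e)}$, which suffices since $\rmax \ge e + d(\gamma)$. Summing over $\sigma$ then gives $\trpi(\gamma h) = \trpi(\gamma)$, and Definition~\ref{def:trpi} extends this constancy to all $\G$-conjugates.

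Part (b) follows quickly from Lemma~\ref{lem:conjugatedTx}: taking $r = \rmax$, every $y \in \UC[x]^{(\rmax + d_\T(x))} \gamma$ is conjugate in $\G$ to some $\gamma h$ with $h \in \tilde H_{\rmax+} \cap \T$, and then the class-function property of $\trpi$ together with (a) gives $\trpi(y) = \trpi(\gamma)$.

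For (c), note that $\T \subseteq \Ce(\ST)$ acts on $\Apa$ by translation by $\nu(\gamma) = 0$ because $\gamma$ is compact, so $\Apa \subseteq \bt{\F}^\gamma$ and hence $d(\gamma) = 0$ directly from \eqref{eq:dgamma}. To remove the $\hgt(\Phi)$ factor, the role of Lemma~\ref{lem:constantfix} is taken over by the sharper Proposition~\ref{prop:conjugatesplit}, and the key containment $h \in \UC[\sigma]^{(e)}$ can be checked directly from the commutator identity $[u_\alpha^{-1}, h] = (\alpha(h) - 1) u_\alpha$ for $u_\alpha \in \Un_\alpha$ and $h \in H_{r+}$ with $r \ge \max\{\sd(\gamma), e\}$, without any height-dependent accumulation over the positive roots. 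The hardest part of the whole argument is the uniform containment in (a): the $\tilde B$-translation that brings $\sigma$ near $\tApa{\tilde \T}$ lives over $\tilde \F$ rather than over $\F$, so its interplay with the valuated root datum subgroups requires careful bookkeeping, and this is the source of the $\hgt(\Phi)$ factor that can only be removed when $\T$ already contains a maximal split torus.
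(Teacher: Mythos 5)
Your overall strategy (the trace formula of \cite{Meyer-Solleveld:Resolutions} applied to large $\PC[x]$\nb-invariant finite convex subcomplexes, Lemma~\ref{lem:constantfix} to identify fixed-point sets, and Lemma~\ref{lem:conjugatedTx} to deduce (b) from (a)) is the right one, and your part (b) is exactly the paper's argument. The central step of your part (a), however, has a genuine gap: the inclusion $h\in\UC[\sigma]^{(e)}$ for \emph{every} polysimplex $\sigma\subseteq\bt{\F}^\gamma$ does not follow with $\rmax=\max\{\hgt(\Phi)\sd(\gamma),\,e+d(\gamma)\}$. Writing $\sigma=b\sigma'$ with $b=tu\in\Talg(\tilde{\F})\tilde{\Un}^+$ and $d_\T\le d(\gamma)$ on $\sigma'$, you would need $b^{-1}hb=u^{-1}hu=h\cdot[h^{-1},u^{-1}]$ to lie in $\tUC[\sigma']^{(e)}$. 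But by Proposition~\ref{prop:fixpoints}.(c) the root components of $u$ can have valuation as low as $-\alpha(y)-\hgt(\alpha)\sd(\gamma)$ for $y\in\sigma'$, so the commutator is only guaranteed to land in $\tUC[\sigma']^{(e)}$ when $\rmax\ge e+\hgt(\Phi)\sd(\gamma)$ --- strictly larger than the theorem's $\rmax$ whenever $e>0$ and $d(\gamma)<\hgt(\Phi)\sd(\gamma)$. This is precisely why the paper does \emph{not} compare the two alternating sums termwise over the same polysimplices: it uses triviality of $\pi(g^{-1}\gamma)$ on $V^{\UC[\sigma]^{(e)}}$ only for $\sigma$ in a fundamental domain $\mathcal D$ where $d_\T\le d(\gamma)$, and matches the remaining terms $\theta(b_1,g)\leftrightarrow\theta(u_2,\gamma)$ through the bijection $u\mapsto[u^{-1},\gamma]$ of Lemma~\ref{lem:[gamma]}, where $u_2$ solves $[u_2^{-1},\gamma]=[u_1^{-1},g]$; the polysimplex $u_2\sigma$ contributing to $\tau_\Sigma(\gamma)$ need not coincide with the polysimplex $b_1\sigma$ contributing to $\tau_\Sigma(g)$.

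Your part (c) has a second gap of the same kind. Once the factor $\hgt(\Phi)$ is dropped, Lemma~\ref{lem:constantfix} no longer applies and one does not know that $\bt{\F}^{\gamma h}=\bt{\F}^\gamma$; Proposition~\ref{prop:conjugatesplit} concerns the conjugation map $\psi$ and does not supply this. The term-matching still goes through, but one must additionally prove that the sums for $\gamma$ and for $\gamma h$ have the same number of terms over each $\sigma$ in the fundamental domain. The paper does this by comparing the indices $[\phi_\gamma^{-1}(\UC[\sigma]^+):\UC[\sigma]^+]$ and $[\phi_{\gamma h}^{-1}(\UC[\sigma]^+):\UC[\sigma]^+]$ via a Haar-measure computation on $\Un^+$, using that $v(1-\alpha(\gamma h))=v(1-\alpha(\gamma))$ for all $\alpha\in\Phi$. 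Without this counting step (and with the termwise claim above unrepaired) the argument is incomplete; also note that $d(\gamma)=0$ in (c) follows from $\Un^+\cdot\Apa=\bt{\F}$, not merely from $\Apa\subseteq\bt{\F}^\gamma$.
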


If \(\tilde{\F} / \F\) is tamely ramified, then~\eqref{eq:Rousseau} shows that there is a point \(x \in \bt{\F}\) with \(d_\T (x) = 0\), so that \(\trpi\) is constant on \(\UC[x]^{(\rmax)}\gamma\).

The number~\(\rmax\) will reappear frequently in the following.  We will not need the definition of~\(\rmax\) but only Theorem~\ref{thm:localconst}.(a).  That is, the following results remain true for a smaller value of~\(\rmax\) provided Theorem~\ref{thm:localconst}.(a) can be established for it.

\begin{proof}
(a) Theorem~\ref{thm:resolution} implies a formula for \(\tr(\repr(f),V)\), which is worked out in 
\cite{Meyer-Solleveld:Resolutions}*{Proposition~4.1}.  We need some notation to state this trace formula.  
For \(g\in\G\), let~\(\Sigma^g\) be the set of all polysimplices~\(\sigma \in \Sigma\) with 
\(g\sigma=\sigma\) and let \(\epsilon_\sigma(g)=\pm1\), depending on whether the automorphism 
of~\(\sigma\) induced by~\(g\) preserves or reverses orientation.  For a locally constant 
function~\(f\) supported in~\(\PC[x]\), \cite{Meyer-Solleveld:Resolutions}*{Proposition~4.1} asserts
  \begin{equation}
    \label{eq:traceformula}
    \tr(\repr(f),V) = \lim_\Sigma \int_{g\in \PC[x]}
    f(g) \sum_{\sigma\in\Sigma^g} (-1)^{\dim \sigma} \epsilon_\sigma(g)
    \tr\bigl(\repr(g),V^{\UC[\sigma]^{(e)}} \bigr) \,\diff\mu(g),
  \end{equation}
  where the limit means that there is a finite convex subcomplex~\(\Sigma_0\) such that the right 
  hand side is the same for all \(\PC[x]\)\nb-invariant finite convex subcomplexes~\(\Sigma\) of 
  \(\bt{\F}\) with \(\Sigma\supseteq\Sigma_0\).  Thus we want to show that the function
  \begin{equation}
    \label{eq:tauSigma}
    \tau_\Sigma\colon g\mapsto
    \sum_{\sigma\in\Sigma^g} (-1)^{\dim \sigma} \epsilon_\sigma(g)
    \tr\bigl(\repr(g),V^{\UC[\sigma]^{(e)}} \bigr)
  \end{equation}
  is constant on \(\UC[x]^{(\rmax + d_\T (x))}\gamma\) for all sufficiently large \(\PC[x]\)\nb-invariant finite convex subcomplexes~\(\Sigma\).  The function~\(\tau_\Sigma\) is invariant under conjugation by elements of~\(\PC[x]\) because~\(\Sigma\) is \(\PC[x]\)\nb-invariant.

  Lemma~\ref{lem:constantfix} yields \(\bt{\F}^g = \bt{\F}^\gamma\) for all \(g \in \tilde{H}_{\rmax{+}} \gamma \cap \T\), because \(\rmax \ge \hgt(\Phi) \sd(\gamma)\).  Since
  \begin{equation}
    \label{eq:HedTx}
    \tilde{H}_{e + d_\T (x){+}} \subseteq \tUC[\piTx]^{(e+d_\T (x))} \subseteq \tUC[x]^{(e)} ,
  \end{equation}
  the operator \(\pi (g^{-1} \gamma)\) restricts to the identity on \(V^{\UC[x]^{(e)}}\), for all~\(x\) with \(d_\T (x) \le d (\gamma )\).

Let~\(\mathcal D\) be a set of simplices in \(\bt{\F}^\gamma\), such that \(\mathcal D\) is a 
fundamental domain for the action of \(\Un^+_T\) on \(\Un^+_T \cdot \bt{\F}^\gamma\) and every 
\(\sigma \in \mathcal D\) contains an interior point~\(x\) with \(d_\T (x) \le d(\gamma)\). 
Equation~\eqref{eq:tauSigma} becomes
  \begin{equation}
    \label{eq:sumtheta}
    \tau_\Sigma (g) = \sum_{u \sigma \in \Sigma^g} \epsilon_{u \sigma}(g)
    \tr \bigl( \pi (g) , V^{\UC[u \sigma]^{(e)}} \bigr) =
    \sum_{u \sigma \in \Sigma^g} \epsilon_{\sigma}(u^{-1}g u)
    \tr \bigl( \pi (u^{-1}gu) , V^{\UC[\sigma]^{(e)}} \bigr) ,
  \end{equation}
  where the sum runs over all polysimplices \(u \sigma \in \Sigma^g = \Sigma^\gamma\) with 
  \(\sigma \in \mathcal D\) and \(u \in \Un^+_T\).  
  Notice that we pick only one~\(u\) 
  for each such polysimplex.  Given another \(u_1 \in \Un^+_T\) with \(u_1 \sigma = u\sigma\), 
  we have \(u_1^{-1} u \in \PC[\sigma]\), so \(\theta (u_1 ,g) = \theta (u,g)\), where
  \[
  \theta (u,g) \defeq \epsilon_{\sigma}(u^{-1}g u)
  \tr \bigl( \pi (u^{-1}gu) , V^{\UC[\sigma]^{(e)}} \bigr).
  \]
  Recall that 
  \[
  \{ u \in \tilde{\Un}^+_T : [u^{-1},g] \in \tilde{\PC[\sigma]} \} =
  \{ u \in \tilde{\Un}^+_T : g (u \sigma) = u \sigma \} .
  \]
  The equality $\Sigma^g = \Sigma^\gamma$ implies that
  \[
  \{ u \in \tilde{\Un}^+ : [u^{-1},g] \in \tilde{\PC[\sigma]} \} =
  \{ u \in \tilde{\Un}^+ : [u^{-1},\gamma] \in \tilde{\PC[\sigma]} \} .
  \]
  We denote this set by $\tilde{\Un}^+_T (\sigma)$ and we write $\Un^+_T (\sigma) =
  \tilde{\Un}^+_T (\sigma) \cap G$. Let $\mu$ be a Haar measure on $\Un^+_T$, normalised
  so that $\mu (\Un^+_T (\sigma)) = 1$.
  Now we can rewrite \eqref{eq:sumtheta} as
  $\tau_\Sigma (g) = \sum_{\sigma \in \mathcal D} \tau_{\Sigma,\sigma}(g)$, where
  \begin{equation}\label{eq:tausigma}
  \tau_{\Sigma,\sigma}(g) = \sum_{u \sigma \in \Sigma^g \cap \Un^+_T \sigma}
  \theta (u,g) = \sum_{u \in \Un^+_T (\sigma) / \Un^+_T \cap \PC[\sigma]} \theta (u,g) =
  \int_{\Un^+_T (\sigma)} \theta (u,g) \textup{d}\mu (u) .
  \end{equation}
  By Lemma~\ref{lem:[gamma]} the map 
  \[
  \phi_g : \tilde{\Un}^+ \to \tilde{\Un}^+\colon u \mapsto [u^{-1},g] 
  \]
  is a diffeomorphism. We claim that $\phi_g^{-1} (\Un^+_T) = \Un^+_T$.
  
  It is clear that $\phi_g^{-1} (\Un^+_T) \supset \Un^+_T$. Suppose that $u \in
  \phi_g^{-1} (\Un^+_T)$, that is, $u^{-1} g u g^{-1} \in \tilde{\Un}^+_T \cap G$.
  For any $\tau \in \mathrm{Gal} (\tilde{\F} / \F)$ we have 
  \[
  G \ni u^{-1} g u = \tau (u^{-1} g u) = \tau (u^{-1}) g \tau (u) , 
  \]
  so $\tau (u) u^{-1} \in Z_{\Galg (\tilde{\F})}(g) = \Talg (\tilde{\F})$. Since
  $\tau (u)$ is unipotent and $\tau (u) \in \Talg (\tilde{\F}) u \subset \tilde{B}$,
  we must have $\tau (u) \in \tilde{\Un}^+_T$. Then $\tau(u) u^{-1} \in
  \tilde{\Un}^+_T \cap \Talg (\tilde{\F}) = \{1\}$, which means that $\tau (u) = u$.
  Hence $u \in \tilde{\Un}^+_T \cap \Galg (\tilde{\F})^{\mathrm{Gal}(\tilde{\F} / \F)}
  = \Un^+_T$.
  
  As $\tilde{\Un}^+_T (\sigma) = \phi_g^{-1}  (\tilde{\Un}^+_T (\sigma) \cap 
  \tilde{\PC[\sigma]} )$, $\phi_g$ restricts to a diffeomorphism
  \[
  \phi_g : \Un^+_T (\sigma) \to \PC[\sigma] \cap \Un^+_T.
  \]
  The same statements hold for $\phi_\gamma : \tilde{\Un}^+_T \to \tilde{\Un}^+_T$.
  For every $u_1 \in \Un^+_T (\sigma)$, thought of as appearing in $\tau_{\Sigma,\sigma}(g)$,
  the element $u_2 = \phi_\gamma^{-1} (\phi_g (u_1))$ appears in 
  $\tau_{\Sigma_,\sigma} (\gamma)$. We do not know whether $u_1 \sigma$ equals $u_2 \sigma$
  or not, fortunately that is not needed. Now
  \begin{align*}
    \theta (u_2 ,\gamma) & = \epsilon_{\sigma}\bigl( [u_2^{-1} ,\gamma] \gamma \bigr)
    \tr \bigl( \pi ([u_2^{-1} ,\gamma] \gamma) , V^{\UC[\sigma]^{(e)}} \bigr) \\
    & = \epsilon_{\sigma}\bigl( [u_1^{-1} ,g] g (g^{-1}\gamma) \bigr)
    \tr \bigl( \pi ([u_1^{-1} ,g]) \pi (g) \pi (g^{-1} \gamma) , V^{\UC[\sigma]^{(e)}} \bigr).
  \end{align*}
  Since \(\Sigma^g = \Sigma^\gamma\), \(g^{-1}\gamma\) fixes~\(\sigma\) pointwise, while in 
  view of~\eqref{eq:HedTx} and the definition of~\(\mathcal D\), \(\pi (g^{-1}\gamma)\) 
  acts as the identity on~\(V^{\UC[\sigma]^{(e)}}\).  Therefore
  \begin{equation}\label{eq:equaltheta}
  \theta (u_2 ,\gamma ) = \epsilon_{\sigma}\bigl( [u_1^{-1} ,g] g) \bigr)
  \tr \bigl( \pi ([u_1^{-1} ,g] g) , V^{\UC[\sigma]^{(e)}} \bigr) = \theta (u_1 ,g),
  \end{equation}
  which shows that every term of the integral \eqref{eq:sumtheta} also occurs in 
  \(\tau_{\Sigma,\sigma} (\gamma)\).  
  
  Like in the proof of Lemma~\ref{lem:[gamma]}, the generalised eigenvalues of the differentials 
  \[
  D\phi_\gamma , D \phi_g\colon \mathrm{Lie}_{\tilde{\F}} (\Un^+_T) \to \mathrm{Lie}_{\tilde{\F}} (\Un^+_T) 
  \]
  are \(\{ 1 - \alpha (\gamma) : \alpha \in \tilde{\Phi}^+ \}\) and \(\{ 1 - \alpha (g) : 
  \alpha \in \tilde{\Phi}^+ \}\), and they occur with multiplicity \(d_\alpha \defeq \dim 
  \mathrm{Lie}_{\tilde{\F}} (\Ualg_\alpha / \Ualg_{2 \alpha})\).  
  The restriction \(h = \gamma^{-1} g \in \tilde H_{\sd (\gamma)+} \cap \Talg (\tilde{\F}) \) implies
  \[
  v (1 - \alpha (g)) = v \bigl(1 - \alpha (\gamma) \alpha (h) \bigr) = v \bigl( 1 - \alpha (\gamma) +
  \alpha (\gamma) (1 - \alpha (h)) \bigr) = v \bigl( 1 - \alpha (\gamma) \bigr)
  \]
  for all \(\alpha \in \tilde{\Phi}\). Hence $\phi_g, \phi_\gamma : \tilde{\Un}^+_T \to \tilde{\Un}^+_T$
  both have Jacobian $\prod_{\alpha \in \tilde{\Phi}^+} \norm{1 - \alpha (g)}_{\tilde{\F}}^{d_\alpha}$.
  It follows that $\phi_g, \phi_\gamma : \Un^+_T \to \Un^+_T$ both have Jacobian
  $\prod_{\alpha} \norm{1 - \alpha (g)}_{\tilde{\F}}^{d_\alpha / [\tilde{\F} : \F]}$, where
  the product runs over all $\alpha \in \tilde{\Phi}^+$ such that the restriction of $\alpha$ to
  $T$ appears in $\Lie (\Un^+_T)$.
  As the diffeomorphisms $\phi_g ,\phi_\gamma : \Un^+_T (\sigma) \to \PC[\sigma] \cap \Un^+_T$ have
  the same Jacobian, and by \eqref{eq:equaltheta},
  \begin{multline*}
  \tau_{\Sigma,\sigma}(g) = \int_{\Un^+_T (\sigma)} \theta (u,g) \textup{d}\mu (u) =
  \int_{\Un^+_T (\sigma)} \theta (\phi_\gamma^{-1} (\phi_g (u)) ,\gamma) \textup{d}\mu (u) \\
  = \int_{\Un^+_T (\sigma)} \theta (u,\gamma) \textup{d}\mu (u) = \tau_{\Sigma,\sigma}(\gamma) . 
  \end{multline*}
  This holds for all $\sigma \in \mathcal D$, so 
  \[
  \tau_\Sigma (g) = \sum_{\sigma \in \mathcal D} \tau_{\Sigma,\sigma}(g) =
  \sum_{\sigma \in \mathcal D} \tau_{\Sigma,\sigma}(\gamma) = \tau_\Sigma (\gamma) .
  \]
(b) Lemma~\ref{lem:conjugatedTx} shows that any element of \(\UC[x]^{(\rmax + d_\T (x))}\gamma\) is
\(\PC[x]\)\nb-conjugate to one of \(\gamma \tilde{H}_{\rmax{+}} \cap \T\).  Hence (b) follows from (a).

(c) To a large extent we will copy the proof of part (a), but we take advantage of 
\(\Un^+ \cdot \Apa = \bt{\F}\).  This clearly implies \(d(\gamma) = 0\), so that~\(\mathcal D\) 
is a collection of simplices of~\(\Apa\). 
This~\(\mathcal D\) works for both \(\gamma\) and \(g = \gamma h\). We may replace 
$\hgt(\Phi) \sd(\gamma)$ by $\sd (\gamma)$ in the definition on $r(\gamma)$, because the
factor $\hgt(\Phi)$ was only needed to ensure that \(\bt{\F}^g\) equals \(\bt{\F}^\gamma\).

With these choices the proof of (a) mostly goes through, even though we do not know whether 
or not $g$ and $\gamma$ have the same fixed points. We now have two possibly different sets
$\Un^+_T (\sigma)$, one for $g$ and one for $\gamma$, but the map $\phi_\gamma^{-1} \circ
\phi_g$ still provides a diffeomorphism between them.
\end{proof}

\subsection{Local constancy for non-compact elements}
\label{sec:constancygen}

We would like to generalise Theorem~\ref{thm:localconst} to all regular semisimple elements.  This is possible using Jacquet modules and parabolic restriction as in~\cite{Casselman:Characters_Jacquet}.  Although the methods in~\cite{Casselman:Characters_Jacquet} are algebraic and not restricted to complex coefficients, Casselman refers to earlier work which was written with complex representations in mind.  This makes it hard to judge whether Casselman's proofs work for representations in good characteristic.  Fortunately, Vign\'eras~\cite{Vigneras:l-modulaires} proved the required results in this generality.

Let \(\gamma\in\T\) be a semisimple element and let \(\Pa_\gamma\subseteq\G\) be the parabolic subgroup contracted by~\(\gamma\), which is defined in~\eqref{eq:Pg}.  Since~\(\F\) is complete with respect to the valuation~\(v\), Proposition~\ref{prop:Pg}.\ref{prop:Pgd} shows that~\(\gamma\) is compact in~\(\Le_\gamma\).  It follows from Proposition \ref{prop:Pg}.\ref{prop:Pgb} that \(\Lie\bigl(\Ralg[\Palg_\gamma]\bigr) \subseteq \Lie(\Galg)\) is the sum of all eigenspaces of \(\Ad(\gamma)\) corresponding to eigenvalues with strictly positive valuation.  (Although the eigenvalues may lie in a field extension of~\(\F\), this subspace is defined over~\(\F\).)  Similarly, \(\Ru[\Pa_{\gamma^{-1}}]\) corresponds to the \(\gamma\)\nb-eigenvalues with strictly negative valuation.

The description of (standard) parabolic subgroups in Definition~\ref{def:standard_parabolic} shows that~\(\Le_\gamma\) contains a maximal split torus of~\(\G\), say~\(\ST_\gamma\).  It may happen that \(\gamma\notin\ST_\gamma\).  Let~\(x\) be a point of the apartment~\(\Apa[\gamma] \) of \(\bt{\F}\) corresponding to~\(\ST_\gamma\).  Proposition~\ref{prop:udp} implies
\begin{equation}
  \label{eq:wellplaced}
  \UC[x]^{(e)}
  = \bigl( \UC[x]^{(e)} \cap \Ru[\Pa_{\gamma^{-1}}] \bigr)
  \bigl( \UC[x]^{(e)} \cap \Le_\gamma \bigr)
  \bigl( \UC[x]^{(e)} \cap \Ru[\Pa_\gamma] \bigr),
\end{equation}
or, in other words, \(\UC[x]^{(e)}\) is well-placed with respect to \((\Pa_\gamma, \Le_\gamma)\).  The collection
\[
X = \{ g x  \in \bt{\F} :
\text{\(g\) lies in the maximal compact subgroup of \(\T\)}\}
\]
is finite and \(\gamma\)\nb-invariant.  Since \(\T\subset\Le_\gamma\), the subgroup \(\UC[x']^{(e)}\) is well-placed with respect to \((\Pa_\gamma, \Le_\gamma)\) for every \(x'\in X\).  The group \(\CG^{(e)} \defeq \bigcap_{x'\in X} \UC[x']^{(e)}\) is also well-placed:
\[
\CG^{(e)} =
\bigl( \CG^{(e)} \cap \Ru[\Pa_{\gamma^{-1}}] \bigr)
\bigl( \CG^{(e)} \cap \Le_\gamma \bigr)
\bigl( \CG^{(e)} \cap \Ru[\Pa_\gamma] \bigr) \eqdef
\CG^{(e)}_- \CG^{(e)}_0 \CG^{(e)}_+.
\]
It follows that
\[
\gamma \CG^{(e)}_- \gamma^{-1} \supsetneq \CG^{(e)}_-,\qquad
\gamma \CG^{(e)}_0 \gamma^{-1} = \CG^{(e)}_0,\qquad
\gamma \CG^{(e)}_+ \gamma^{-1} \subsetneq \CG^{(e)}_+ ,
\]
so that the sequence \(\CG^{(e)}\) for \(e\in\N\) has all the properties claimed in~\cite{Deligne:Support}.

\begin{theorem}[\cite{Vigneras:l-modulaires}*{II.3.7}]
  \label{thm:subspaces}
  Let \((\repr,V)\) be an admissible smooth \(\G\)\nb-representation in good characteristic and let \(g\in\G\) be such that \(\Pa_g=\Pa_\gamma\).  There exist increasing sequences of finite-dimensional vector spaces \(V^{(e)} \subseteq V^{\CG^{(e)}}\) and \(V_{\Ru[\Pa_\gamma]}^{(e)} \subseteq V_{\Ru[\Pa_\gamma]}^{\CG_0^{(e)}}\) such that
  \begin{enumerate}[label=\textup{(\alph{*})}]
  \item \(\bigcup_e V^{(e)} \oplus V(\Ru[\Pa_\gamma]) = V\) and \(\bigcup_e V_{\Ru[\Pa_\gamma]}^{(e)} = V_{\Ru[\Pa_\gamma]}\),
  \item The quotient map \(V \to V / V(\Ru[\Pa_\gamma]) = V_{\Ru[\Pa_\gamma]}\) restricts to bijections \(V^{(e)} \to V_{\Ru[\Pa_\gamma]}^{(e)}\) and \(\bigl( \bigcup_r V^{(r)} \bigr)^{\CG^{(e)}} \to V_{\Ru[\Pa_\gamma]}^{\CG_0^{(e)}}\),
  \item \(V^{(e)}\) is stable under \(\repr (1_{\CG^{(e)}gK^{(e)}})\).
  \end{enumerate}
\end{theorem}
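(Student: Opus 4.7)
The approach I would take is to adapt Casselman's canonical construction of the Jacquet module, known in the complex case, to the good-characteristic setting. The essential tool is the contracting Hecke operator
\[
T_e \defeq \mu(\CG^{(e)})^{-1} \repr(1_{\CG^{(e)} \gamma \CG^{(e)}})
\]
acting on the finite-dimensional space \(V^{\CG^{(e)}}\). The well-placed decomposition \eqref{eq:wellplaced} together with the strict contraction \(\gamma \CG^{(e)}_+ \gamma^{-1} \subsetneq \CG^{(e)}_+\) from Proposition~\ref{prop:Pg} yields a key identity: on \(V^{\CG^{(e)}}\), the \(n\)-th power \(T_e^n\) coincides up to a positive constant with \(\repr(\ide{\CG^{(e)}})\repr(\gamma^n)\).

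First I would apply the Fitting lemma to \(T_e\): since \(V^{\CG^{(e)}}\) is finite-dimensional by admissibility, it splits canonically as \(V^{\CG^{(e)}} = V^{(e)} \oplus V_{\textup{nil}}^{(e)}\) with \(T_e\) invertible on the first summand and nilpotent on the second. The heart of the argument is the intrinsic identification
\[
V_{\textup{nil}}^{(e)} = V^{\CG^{(e)}} \cap V(\Ru[\Pa_\gamma]).
\]
For \(\supseteq\), Proposition~\ref{prop:Pg}.\ref{prop:Pgb} exhibits \(\Ru[\Pa_\gamma]\) as the ascending union \(\bigcup_n \gamma^{-n}\CG^{(e)}_+\gamma^n\), so any \(v \in V(\Ru[\Pa_\gamma]) \cap V^{\CG^{(e)}}\) is a finite sum \(\sum_i (\repr(\gamma^{-n}k_i\gamma^n){-}1)w_i\) with \(k_i \in \CG^{(e)}_+\). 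Applying \(\repr(\gamma^n)\) yields a vector in \(V(\CG^{(e)}_+) \subseteq V(\CG^{(e)})\), which is annihilated by the averaging \(\repr(\ide{\CG^{(e)}})\); therefore \(T_e^n v = 0\). The converse \(\subseteq\) uses the well-placed factorisation of \(V(\CG^{(e)})\): vanishing of the average of \(\repr(\gamma^n)v\) forces \(\repr(\gamma^n)v\) into \(V(\CG^{(e)}_+) + V(\CG^{(e)}_0) + V(\CG^{(e)}_-)\), and the strict expansion of \(\gamma\) on \(\CG^{(e)}_-\) combined with the compactness of \(\CG^{(e)}_0\) forces \(v\) into \(V(\Ru[\Pa_\gamma])\) after pulling back by \(\repr(\gamma^{-n})\).

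With this identification, the quotient \(V \to V_{\Ru[\Pa_\gamma]}\) restricts to an injection on \(V^{(e)}\), and I set \(V^{(e)}_{\Ru[\Pa_\gamma]}\) to be the image. Surjectivity onto \(V^{\CG^{(e)}_0}_{\Ru[\Pa_\gamma]}\) is a standard lifting argument: a \(\CG^{(e)}_0\)-invariant Jacquet class has a pre-image, and averaging its \(\repr(\gamma^{-n})\)-translate against \(\CG^{(e)}\) for sufficiently large \(n\) yields a \(\CG^{(e)}\)-invariant representative in the same class. Monotonicity \(V^{(e)} \subseteq V^{(e+1)}\) can be arranged by a compatible recursive choice of splittings, exploiting that the nilpotent parts are intrinsically characterised by \(V(\Ru[\Pa_\gamma])\). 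The exhaustion \(\bigcup V^{(e)} \oplus V(\Ru[\Pa_\gamma]) = V\) in (a) then follows from \(V = \bigcup_e V^{\CG^{(e)}}\). For (c), the hypothesis \(\Pa_g = \Pa_\gamma\) implies that \(g\) contracts \(\Ru[\Pa_\gamma]\) in the same direction as \(\gamma\); hence \(\repr(1_{\CG^{(e)}g\CG^{(e)}})\) shares the dynamical features of \(T_e\) and preserves its Fitting decomposition.

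The main obstacle will be the converse inclusion \(V_{\textup{nil}}^{(e)} \subseteq V(\Ru[\Pa_\gamma])\) and the lifting step, since in good positive characteristic one cannot pass freely to analytic limits. Instead, one must verify stabilisation of the averaging sequences in finitely many steps, relying on the finite-dimensionality of \(V^{\CG^{(e)}}\), the invertibility of \(p\) in the coefficient field so that Haar-normalised averages are well-defined in \(\Hecke(\G,\K)\), and the geometric exhaustion of \(\Ru[\Pa_\gamma]\) from Proposition~\ref{prop:Pg}.\ref{prop:Pgb}.
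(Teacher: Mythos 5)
The paper does not prove this theorem: it is imported verbatim from \cite{Vigneras:l-modulaires}*{II.3.7} (the bracketed reference in the theorem header is the ``proof''), and the surrounding text states explicitly that Vign\'eras established the required results in good characteristic precisely so that they need not be reproved here. Your proposal is therefore not competing with an argument in the paper but reconstructing the one in the cited source, and in outline it is the right reconstruction: the contracting Hecke operator \(T_e\), the Fitting decomposition of the finite-dimensional space \(V^{\CG^{(e)}}\), the identification of the nilpotent summand with \(V^{\CG^{(e)}}\cap V(\Ru[\Pa_\gamma])\), and Jacquet's lemma for the surjectivity onto \(V_{\Ru[\Pa_\gamma]}^{\CG_0^{(e)}}\) are exactly the ingredients of the Casselman--Vign\'eras argument (\cite{Casselman:Characters_Jacquet}, \cite{Vigneras:l-modulaires}*{II.3}).

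Two steps of your sketch need repair. First, the inclusion \(V^{(e)}_{\textup{nil}}\subseteq V(\Ru[\Pa_\gamma])\) cannot be run through \(V(\CG^{(e)})\subseteq V(\CG^{(e)}_-)+V(\CG^{(e)}_0)+V(\CG^{(e)}_+)\): the first two summands are not contained in \(V(\Ru[\Pa_\gamma])\), and applying \(\repr(\gamma^{-n})\) does not remove them. The correct route is shorter. Since \(\gamma^n\CG^{(e)}_-\gamma^{-n}\supseteq\CG^{(e)}_-\) and \(\gamma\) normalises \(\CG^{(e)}_0\), the vector \(w=\repr(\gamma^n)v\) is already invariant under \(\CG^{(e)}_0\CG^{(e)}_-\); writing \(\ide{\CG^{(e)}}\) as \(\ide{\CG^{(e)}_+}\ide{\CG^{(e)}_0}\ide{\CG^{(e)}_-}\) (the unique decomposition holds for any ordering of the factors) gives \(\ide{\CG^{(e)}}w=\ide{\CG^{(e)}_+}w\), so \(T_e^nv=0\) forces \(w\in V(\CG^{(e)}_+)\subseteq V(\Ru[\Pa_\gamma])\), and the latter is \(\repr(\gamma^{-n})\)-stable because \(\gamma\) normalises \(\Ru[\Pa_\gamma]\). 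Second, the monotonicity \(V^{(e)}\subseteq V^{(e+1)}\) cannot be ``arranged by a recursive choice of splittings'': the complement is forced on you --- it is the canonical Fitting complement \(\bigcap_n T_e^nV^{\CG^{(e)}}\), and only for this choice does the stability assertion (c) hold --- so you must actually prove that the canonical lifting \(V_{\Ru[\Pa_\gamma]}^{\CG_0^{(e)}}\to V^{(e)}\) is compatible with passing from \(e\) to \(e+1\). This is an additional lemma (Casselman's independence of the canonical lift from the choice of well-placed subgroup), not a matter of choice. With these two corrections your outline matches the proof in the cited source.
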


This setup allows us to use the (elementary) arguments from \cite{Casselman:Characters_Jacquet}*{page 104}, which result in
\begin{equation}
  \label{eq:Cas}
  \tr\bigl(\mu(\CG^{(e)}gK^{(e)})^{-1} \repr(1_{\CG^{(e)} g \CG^{(e)}}),
  V\bigr) =
  \tr\bigl( \repr_{\Ru[\Pa_\gamma]}(g),
  V_{\Ru[\Pa_\gamma]}^{\CG_0^{(e)}} \bigr)
\end{equation}
for all \(g\in\G\) with \(\Pa_g=\Pa_\gamma\).  Notice that the set of such~\(g\) is contained in~\(\Le_\gamma\), so it is not open in~\(\G\) unless~\(\gamma\) is compact in~\(\G\).

\begin{theorem}
  \label{thm:trJacquet}
  Let~\(\gamma\) be a regular semisimple element.  Then \(\trpi(\gamma)\) and \(\tr_{\repr_{\Ru[\Pa_\gamma]}}(\gamma)\) are both defined, and they are equal.
\end{theorem}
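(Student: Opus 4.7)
The plan is to reduce to the compact case handled by Theorem~\ref{thm:localconst} via parabolic restriction to the Levi subgroup $\Le_\gamma = \Pa_\gamma \cap \Pa_{\gamma^{-1}}$. First I would observe that $\gamma$ is a regular semisimple compact element of~$\Le_\gamma$: compactness is exactly Proposition~\ref{prop:Pg}.\ref{prop:Pgd}, while regularity in~$\Le_\gamma$ follows from the inclusion $\T \subseteq \Le_\gamma$ and the fact that the centraliser of $\gamma$ in~$\Le_\gamma$ is contained in its centraliser $\T$ in~$\G$. By Proposition~\ref{prop:levele}, the Jacquet module $(\repr_{\Ru[\Pa_\gamma]}, V_{\Ru[\Pa_\gamma]})$ is then an admissible representation of~$\Le_\gamma$ of level at most~$e$.

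Applying Theorem~\ref{thm:localconst} to this Jacquet module and to the compact regular semisimple element $\gamma \in \Le_\gamma$ yields a compact open subgroup $\CG_L \subseteq \Le_\gamma$ such that the character $\tr_{\repr_{\Ru[\Pa_\gamma]}}$ is defined and constantly equal to some value~$\tau$ on $\CG_L \gamma \CG_L$. This establishes the first half of the conclusion and pins down the value that $\trpi(\gamma)$ must take.

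For the second half I would combine the Casselman identity~\eqref{eq:Cas} with Lemma~\ref{lem:conjugatedTx}. Choose $e$ large enough so that $\CG_0^{(e)} \subseteq \CG_L$; then~\eqref{eq:Cas} gives
\[
\mu(\CG^{(e)} g \CG^{(e)})^{-1} \tr(\repr(1_{\CG^{(e)} g \CG^{(e)}}), V) = \tr(\repr_{\Ru[\Pa_\gamma]}(g), V_{\Ru[\Pa_\gamma]}^{\CG_0^{(e)}})
\]
for every $g \in \G$ with $\Pa_g = \Pa_\gamma$. Interpreted as the trace of the finite-rank operator $\ide{\CG_0^{(e)}} \repr_{\Ru[\Pa_\gamma]}(g) \ide{\CG_0^{(e)}}$ on $V_{\Ru[\Pa_\gamma]}$, the right-hand side equals~$\tau$ for any $g \in \CG_L \gamma \CG_L$ with $\Pa_g = \Pa_\gamma$, by the defining property of $\tau$ as the character value of the Jacquet module on that double coset. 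The condition $\Pa_g = \Pa_\gamma$ is governed by the signs of the valuations $\tilde v(\alpha(g))$ at all roots; since~$\gamma$ is regular these signs are nonzero off the roots of~$\Le_\gamma$, so the condition is open and preserved in a small $\T$-neighbourhood of~$\gamma$.

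Finally, because $\trpi$ is a class function, it is enough to produce a compact open subgroup $\CG \subseteq \G$ such that every element of $\CG \gamma \CG$ is $\G$-conjugate to a torus element in the neighbourhood of~$\gamma$ treated above. This is supplied by Lemma~\ref{lem:conjugatedTx}: for $r$ sufficiently large, $\CG \defeq \UC[x]^{(r + d_\T(x))}$ has the property that each element of $\CG \gamma$ is $\Kx$-conjugate to some $\gamma h \in \tilde H_{r+}\gamma \cap \T$, and for $r$ large enough this element lies in $\CG_L \gamma$ and still satisfies $\Pa_{\gamma h} = \Pa_\gamma$. The main obstacle is the middle step: verifying that $\tr(\repr_{\Ru[\Pa_\gamma]}(g), V_{\Ru[\Pa_\gamma]}^{\CG_0^{(e)}}) = \tau$ via~\eqref{eq:Cas}, which requires unpacking Casselman's normalisation to confirm that this finite-rank trace actually computes the Jacquet-module character at~$g$ once $\CG_0^{(e)}$ is small enough for the trace function there to be captured in the sense of Lemma~\ref{lem:trace_exists}.
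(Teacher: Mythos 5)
Your proposal matches the paper's proof: both reduce to the compact regular element \(\gamma\in\Le_\gamma\) via Proposition~\ref{prop:Pg}.\ref{prop:Pgd} and Theorem~\ref{thm:localconst} applied to the Jacquet module, and then transfer the resulting character value back to~\(\G\) through the Casselman identity~\eqref{eq:Cas} together with Lemma~\ref{lem:trace_exists}. The only structural difference is your final conjugation step via Lemma~\ref{lem:conjugatedTx}, which the paper defers to Theorem~\ref{thm:trconst}(b); for the present statement it suffices to observe, as the paper does, that the sets \(\CG^{(e)}\gamma\CG^{(e)}\) form a neighbourhood basis of~\(\gamma\) in~\(\G\), so that Lemma~\ref{lem:trace_exists} applies directly.
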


\begin{proof}
  Since~\(\gamma\) is compact in~\(\Le_\gamma\), Theorem~\ref{thm:localconst} tells us that \(\tr_{\repr_{\Ru[\Pa_\gamma]}}\) is well-defined and constant near~\(\gamma\).  Pick an \(e\in\N\) such that it is constant on \(\gamma \CG_0^{(e)}\).  Now~\eqref{eq:Cas} yields
  \begin{align*}
    \tr_{\repr_{\Ru[\Pa_\gamma]}} (\gamma) & = \tr \bigl( \repr_{\Ru[\Pa_\gamma]}
    \bigl(\gamma*\ide{\CG_0^{(e)}} \bigr), V_{\Ru[\Pa_\gamma]}\bigr)
    = \tr \bigl( \repr_{\Ru[\Pa_\gamma]}(\gamma), V_{\Ru[\Pa_\gamma]}^{\CG_0^{(e)}} \bigr) \\
    & = \tr \bigl( \mu( \CG^{(e)} \gamma \CG^{(e)})^{-1} \repr(1_{\CG^{(e)} g \CG^{(e)}}) , V \bigr).
  \end{align*}
  As the subsets \(\CG^{(e)} \gamma \CG^{(e)}\) form a neighbourhood basis of~\(\gamma\) in~\(\G\), taking the limit \(e \to \infty\) and invoking Lemma~\ref{lem:trace_exists} shows that \(\trpi (\gamma)\) is well-defined and equals \(\tr_{\repr_{\Ru[\Pa_\gamma]}} (\gamma)\).
\end{proof}

This theorem, which Casselman~\cite{Casselman:Characters_Jacquet} proved for complex representations, enables us to reduce the computation of traces from general semisimple elements to compact semisimple elements.  Theorem~\ref{thm:localconst} tells us on which neighbourhood of~\(\gamma\) the function \(\tr_{\repr_{\Ru[\Pa_\gamma]}}\) is constant.  But this is only a neighbourhood in~\(\Le_\gamma\).  We also want to know on which neighbourhood in~\(\G\) the function~\(\trpi\) is constant.  Let~\(\rmax\) be such that Theorem~\ref{thm:localconst}.(a) holds when we view~\(\gamma\) as a compact element in~\(\Le_\gamma\).

\begin{theorem}
  \label{thm:trconst}
  Let~\(\gamma\) be a regular element of a \textup(not necessarily split\textup) maximal torus~\(\T\) of~\(\G\).  Let \((\repr,V)\) be an admissible representation of~\(\G\) of level~\(e\) in good characteristic.
  \begin{enumerate}[label=\textup{(\alph{*})}]
  \item The function \(\trpi\) is defined and constant on \(\tilde{H}_{\rmax{+}} \gamma \cap \T\), and on all \(\G\)\nb-conjugacy classes intersecting this set.
  \item The function \(\trpi\) is constant on \(\UC[x]^{(\rmax + d_\T (x))}\gamma \), for any \(x \in \bt{\F}\).
  \end{enumerate}
\end{theorem}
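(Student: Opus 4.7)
The plan is to reduce everything to the compact case inside the Levi $\Le_\gamma$ and then transport the result back using Theorem~\ref{thm:trJacquet}. The setup is already laid out at the end of Section~\ref{sec:constancygen}: Proposition~\ref{prop:Pg}.\ref{prop:Pgd} says $\gamma$ is compact in $\Le_\gamma$, and Proposition~\ref{prop:levele} says the Jacquet module $V_{\Ru[\Pa_\gamma]}$ is an admissible representation of $\Le_\gamma$ of level $e$. Hence Theorem~\ref{thm:localconst}.(a), applied in $\Le_\gamma$ to $V_{\Ru[\Pa_\gamma]}$, tells us exactly that $\tr_{\repr_{\Ru[\Pa_\gamma]}}$ is defined and constant on $\tilde{H}_{\rmax{+}} \gamma \cap \T$, with $\rmax$ chosen precisely so that this statement holds.

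For part~(a), the remaining task is to promote this identity to a statement about $\trpi$. I would argue that every $g = \gamma h$ with $h \in \tilde{H}_{\rmax{+}} \cap \T$ is still regular semisimple and satisfies $\Pa_g = \Pa_\gamma$. Indeed, for each root $\alpha$ of $(\Galg(\tilde\F), \Talg(\tilde\F))$, the identity $\alpha(\gamma h) - 1 = \bigl(\alpha(\gamma) - 1\bigr) + \alpha(\gamma)\bigl(\alpha(h) - 1\bigr)$ together with $\tilde{v}(\alpha(h) - 1) > \rmax \ge \sd(\gamma) \ge \tilde{v}(\alpha(\gamma) - 1)$ gives $\tilde{v}(\alpha(\gamma h) - 1) = \tilde{v}(\alpha(\gamma) - 1)$, and similarly $\tilde{v}(\alpha(\gamma h)) = \tilde{v}(\alpha(\gamma))$; hence $\gamma h$ is regular with the same contracted parabolic as $\gamma$. (One checks the compatibility $\sd_{\Le_\gamma}(\gamma) = \sd_\G(\gamma)$, since only eigenvalues of valuation zero can contribute a positive $v(\lambda - 1)$.) Theorem~\ref{thm:trJacquet} then yields $\trpi(g) = \tr_{\repr_{\Ru[\Pa_\gamma]}}(g)$ for every such $g$, and combining with the previous paragraph shows $\trpi$ is defined and constant on $\tilde{H}_{\rmax{+}}\gamma \cap \T$. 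Constancy on the $\G$-conjugacy classes through this set is automatic since $\trpi$ is a class function on its domain.

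Part~(b) then follows immediately from Lemma~\ref{lem:conjugatedTx}, which requires only $\rmax \ge \sd(\gamma)$: every element of $\UC[x]^{(\rmax + d_\T(x))}\gamma$ is $\G$-conjugate to an element of $\tilde{H}_{\rmax{+}}\gamma \cap \T$, so part~(a) applies.

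The main obstacle is the verification in the previous paragraph that the neighbourhood $\tilde{H}_{\rmax{+}}\gamma \cap \T$ is genuinely a neighbourhood of regular semisimple elements \emph{with the same contracted parabolic as $\gamma$}, so that Theorem~\ref{thm:trJacquet} may be invoked uniformly. Once this is settled, the proof is just the concatenation of results already assembled: Proposition~\ref{prop:Pg} gives compactness in $\Le_\gamma$, Proposition~\ref{prop:levele} preserves the level, Theorem~\ref{thm:localconst}.(a) handles constancy of the Jacquet character, Theorem~\ref{thm:trJacquet} transfers it to $\trpi$, and Lemma~\ref{lem:conjugatedTx} globalises from $\T$ to $\UC[x]^{(\rmax + d_\T(x))}\gamma$.
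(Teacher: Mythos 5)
Your proposal is correct and follows essentially the same route as the paper: verify that every \(g \in \tilde{H}_{\rmax{+}}\gamma \cap \T\) is regular with \(\Pa_g = \Pa_\gamma\) (via the valuations \(\tilde{v}(\alpha(g)) = \tilde{v}(\alpha(\gamma))\) and \eqref{eq:PhiPg}), transfer via Theorem~\ref{thm:trJacquet}, conclude constancy from Theorem~\ref{thm:localconst}.(a) together with Proposition~\ref{prop:levele}, and deduce~(b) from Lemma~\ref{lem:conjugatedTx}. Your explicit valuation computation for \(\alpha(\gamma h)-1\) is just a slightly more detailed version of the paper's remark that \(g\gamma^{-1}\) is compact.
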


\begin{proof}
  For every root \(\alpha \in \Phi\bigl(\Galg(\tilde{\F}),\Talg(\tilde{\F})\bigr)\) and every \(g \in \tilde{H}_{\rmax{+}} \gamma \cap \T\) we have \(\tilde{v}\bigl(\alpha (g)\bigr) = \tilde{v}\bigl(\alpha(\gamma)\bigr)\) because \(g \gamma^{-1}\) is compact.  Together with~\eqref{eq:PhiPg}, this implies \(\Pa_g = \Pa_\gamma\), so that Theorem~\ref{thm:trJacquet} applies to all \(g \in \tilde{H}_{\rmax{+}} \gamma \cap \T\) and tells us that \(\trpi(g) = \tr_{\repr_{\Ru[\Pa_\gamma]}} (g)\).  Theorem~\ref{thm:localconst} and Proposition~\ref{prop:levele} show that \(\tr_{\repr_{\Ru[\Pa_\gamma]}}\) is constant on \(\tilde{H}_{\rmax{+}} \gamma \cap \T\), so the same goes for \(\trpi\).  This proves~(a), from which~(b) follows upon applying Lemma~\ref{lem:conjugatedTx}.
\end{proof}

This theorem is similar to \cite{Adler-Korman:Local_character}*{Corollary 12.11}, which was proved only for complex representations and ``tame'' elements~\(\gamma\).  Our neighbourhoods of constancy are usually smaller than those in~\cite{Adler-Korman:Local_character}, because Theorem~\ref{thm:localconst}.(a) is not optimal.  The results of Adler and Korman suggest that Theorem~\ref{thm:localconst}.(c) could be valid whenever the maximal torus~\(\T\) splits over a tamely ramified extension of~\(\F\).  Possibly this has something to do with Rousseau's result \eqref{eq:Galoisinv}.

\section{A bound for the dimension of \texorpdfstring{$V^\CG$}{fixed-point subspaces}}
\label{sec:bound}

In this section, we will use the resolutions of~\cite{Meyer-Solleveld:Resolutions} to estimate the dimension of~\(V^{\UC[x]^{(e)}}\) for an admissible representation \((\repr, V)\) of~\(\G\) in good characteristic.  We abbreviate \(\CG_e\defeq\UC[x]^{(e)}\).

First we estimate the growth of some related double coset spaces in order to show that our later estimates are optimal, at least for \(\GL_n\).

Since every irreducible smooth representation is a subquotient of a parabolically induced one, the essential case is \(V = \Ind_\Pa^\G (W)\), where~\(\Pa\) is a parabolic subgroup of~\(\G\) and \((\rho ,W)\) is a supercuspidal representation of \(\Pa / \Ru\).  There is a natural isomorphism
\begin{equation}
  \label{eq:VKe}
  V^{\CG_e} \cong \bigoplus_{\Pa g \CG_e} W^{\Pa \cap g \CG_e g^{-1}},
\end{equation}
where the sum runs over all double \((\Pa,\CG_e)\)-cosets.  The space \(\Pa \backslash \G / \CG_e\) is finite because \(\Pa \backslash \G\) is a complete algebraic variety (and hence compact in the \(p\)\nb-adic topology) and~\(\CG_e\) is open.  We will discuss how \(\abs{\Pa \backslash \G / \CG_e}\) grows as~\(e\) increases, under some simplifications.  If~\(\Pa\) is a Borel subgroup and~\(\rho\) is a character, then \(\abs{\Pa \backslash \G / \CG_e}\) and \(\dim V^{\CG_e}\) have equivalent growth rates.

Suppose that~\(\G\) is split.  Let~\(\ST\) be a split maximal torus of~\(\G\) and let~\(\Pa_D\) be a standard parabolic subgroup of~\(\G\).  The dimension of \(\Pa_D \backslash \G\) is
\[
\dim (\Pa_D \backslash \G)
= \dim_{\F} \bigl( \Lie (\G) / \Lie (\Pa_D ) \bigr)
= \sum_{\alpha \in \Phi^- \backslash \Phi_D^-} \dim_\F \Lie (\Un_\alpha)
= \abs{\Phi^-} - \abs{\Phi_D^-}.
\]
Let \(x \in \Apa\).  By construction, the groups~\(\CG_e\) decrease equally fast in every direction; if~\(\CG_e\) corresponds to a lattice~\(L^{(e)}\) in \(\Lie (\G)\), then~\(\CG_{e+1}\) corresponds to~\(\maxid L^{(e)}\), where~\(\maxid\) is the maximal ideal in the maximal compact subring of~\(\F\).  Hence a double coset \(\Pa_D g \CG_e\) contains approximately~\(q^{\dim (\Pa_D \backslash \G)}\) double \(\bigl(\Pa_D,\CG_{e+1} \bigr)\)-cosets.  Therefore, \(\abs{\Pa_D \backslash \G / \CG_e}\) grows, in first approximation, like \(q^{e \dim (\Pa_D \backslash \G)}\).

Now we focus on the easier example \(\G=\GL_n\) and let \(\Pa\) and~\(\ST\) be the standard Borel subgroup and the standard maximal torus in \(\GL_n (\F)\).  The irreducible representations of \(\ST = \Pa / \Ru\) are characters.  Let \((\rho,\C)\) be such a character and let~\(V\) be the parabolically induced representation of~\(\G\).  Since any character is trivial on \(\CG_e\cap\ST\) for large enough~\(e\), \(\C^{\Pa \cap g \CG_e g^{-1}}\cong\C\) for large enough~\(e\), so that \(\dim (V^{\CG_e}) = \abs{\Pa\backslash \G/\CG_e}\) for large~\(e\).  These numbers are routine to compute:
\begin{equation}
  \label{eq:growthGLn}
  \abs{\Pa \backslash \G / \CG_e} \approx e^{n-1} q^{en(n-1)/2}
\end{equation}
in the sense that the quotient of both sides tends towards a constant as \(e \to \infty\).

For complex representations, we may use the growth rate of \(\dim V^{\CG_e}\) to estimate the growth of the character.  It will, however, turn out that these estimates are far from optimal.  The idea is simple enough: if \(\trpi\) is constant on~\(\CG_e\gamma\), then
\[
\trpi(\gamma)
= \frac{1}{\bigl|\CG_e\gamma\bigr|}
\int_{\CG_e\gamma} \trpi(\gamma) \,\diff\mu(\gamma)
= \tr\bigl(\pi(\ide{\CG_e}\gamma)\bigr).
\]
Equip the finite-dimensional vector space~\(V^{\CG_0}\) with some norm.  Since the range of \(\ide{\CG_e}\gamma\) is contained in \(V^{\CG_e} \subseteq V^{\CG_0}\) and the largest eigenvalue of \(\ide{\CG_e}\gamma\) is controlled by the operator norm \(\norm{\ide{\CG_0}\gamma\ide{\CG_0}}_\infty\), we get the estimate
\begin{equation}
  \label{eq:trace_by_dimension_estimate}
  \abs{\trpi(\gamma)}
  \\\le \norm{\ide{\CG_0}\gamma\ide{\CG_0}}_\infty\cdot \dim V^{\CG_e}.
\end{equation}
Since the function \(\gamma\mapsto \ide{\CG_0}\gamma\ide{\CG_0}\) is locally constant, the \emph{local} growth of the right hand side is equivalent to that of \(\dim V^{\CG_e}\).  This depends on~\(\gamma\) via~\(e\).  For~\(x\) sufficiently close to the set of singular elements (namely, for \(\sd(\gamma)>e+d(\gamma)\)) we may take \(e=\sd(\gamma)\) by Theorem~\ref{thm:localconst}.

Unfortunately, a direct computation for \(\GL_n\) shows that
\[
\sum_{e=0}^\infty \dim V^{\CG_e} \cdot \mu\{g\in \CG_0 : \sd(g)=e\}
\]
diverges, already for \(\GL_2\).  Hence the estimate~\eqref{eq:trace_by_dimension_estimate} does not imply the local integrability of~\(\trpi\).  The authors have not been able to detect the additional cancellation in our trace formula that makes the character locally integrable.

Instead, we estimate of the growth of \(\dim V^{\CG_e}\).  For convenience, we assume that \(x = o\) is the origin of the apartment~\(\Apa\) and that \(e \in \Z_{\ge 0}\).

Theorem~\ref{thm:resolution} assigns to every convex subcomplex~\(\Sigma\) of \(\bt{\F}\) a subspace of~\(V\), namely the image \(\sum_{x\in\Sigma^\circ} V^{\UC[x]^{(e)}}\) of \(\partial_0\colon C_0 (\Sigma ,V) \to V\).  This space admits an important alternative description if~\(\Sigma\) is finite.

\begin{theorem}[\cite{Meyer-Solleveld:Resolutions}*{Theorem 2.12}]
  \label{thm:supproj}
  The elements
  \[
  u_\Sigma^{(e)} \defeq
  \sum_{\sigma \in \Sigma} (-1)^{\dim\sigma} \ide{\UC[\sigma]^{(e)}} \in \Hec
  \]
  are idempotent and
  \begin{align*}
    u_{\Sigma}^{(e)} \Hec
    &= \sum_{x \in \Sigma^\circ} \ide{\UC[x]^{(e)}} \Hec,\\
    \bigl(1 - u_\Sigma^{(e)} \bigr) \Hec &=
    \bigcap_{x \in \Sigma^\circ} \bigl(1 - \ide{\UC[x]^{(e)}} \bigr)  \Hec.
  \end{align*}
  In particular,
  \[
  \image(\partial_0\colon C_0 (\Sigma ,V) \to V)
  = \sum_{x \in \Sigma^\circ} V^{\UC[x]^{(e)}} = u_\Sigma^{(e)} V.
  \]
\end{theorem}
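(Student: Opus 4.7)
My plan is to derive all three assertions from Theorem~\ref{thm:resolution} applied to the left regular representation of \(\G\) on the Hecke algebra itself, combined with a combinatorial Euler-characteristic identity among the idempotents \(\ide{\UC[\sigma]^{(e)}}\).

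Setup. For \(V=\Hec\) with the left regular action \(\pi(g)f(h)=f(g^{-1}h)\), the invariant subspace \(\Hec^{\UC[\sigma]^{(e)}}\) equals the right ideal \(\ide{\UC[\sigma]^{(e)}}\Hec\). Theorem~\ref{thm:resolution} therefore yields an exact augmented chain complex of right \(\Hec\)\nb-modules
\begin{equation*}
  \dotsb \to \bigoplus_{\sigma\in\Sigma^1}\ide{\UC[\sigma]^{(e)}}\Hec
  \to \bigoplus_{x\in\Sigma^\circ}\ide{\UC[x]^{(e)}}\Hec
  \xrightarrow{\partial_0} M \to 0,
\end{equation*}
where \(M\defeq\sum_{x\in\Sigma^\circ}\ide{\UC[x]^{(e)}}\Hec\); every term is a direct summand of \(\Hec\) through its displayed idempotent. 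For each \(\sigma\in\Sigma\), any vertex \(x\) of \(\sigma\) satisfies \(\UC[x]^{(e)}\subseteq\UC[\sigma]^{(e)}\) by Theorem~\ref{thm:UOmegae}.\ref{UOmegae_4}, so the elementary convolution identity \(\ide{U_1}\ide{U_2}=\ide{U_2}\) for \(U_1\subseteq U_2\) gives \(\ide{\UC[\sigma]^{(e)}}\in\ide{\UC[x]^{(e)}}\Hec\subseteq M\). Summing with signs shows \(u_\Sigma^{(e)}\in M\), hence \(u_\Sigma^{(e)}\Hec\subseteq M\); symmetrically \(\Hec u_\Sigma^{(e)}\subseteq\sum_x\Hec\ide{\UC[x]^{(e)}}\).

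Reduction. The remainder of the theorem follows from the pair of identities
\begin{equation*}
  u_\Sigma^{(e)}\,\ide{\UC[x]^{(e)}} = \ide{\UC[x]^{(e)}}
  = \ide{\UC[x]^{(e)}}\,u_\Sigma^{(e)}
  \qquad\text{for every \(x\in\Sigma^\circ\).}
\end{equation*}
Indeed, granting these, \(u_\Sigma^{(e)}\) acts as the identity from both sides on each generator of \(M\), so \(M\subseteq u_\Sigma^{(e)}\Hec\), and \((u_\Sigma^{(e)})^2=u_\Sigma^{(e)}\) (using \(u_\Sigma^{(e)}\in M\)). For the complementary equality, \(u_\Sigma^{(e)}h=0\) forces \(\ide{\UC[x]^{(e)}}h=\ide{\UC[x]^{(e)}}u_\Sigma^{(e)}h=0\) for every \(x\); conversely \(\ide{\UC[x]^{(e)}}h=0\) for every \(x\) forces \(\ide{\UC[\sigma]^{(e)}}h=\ide{\UC[\sigma]^{(e)}}\ide{\UC[x]^{(e)}}h=0\) for every \(\sigma\) (taking \(x\) a vertex of \(\sigma\)), whence \(u_\Sigma^{(e)}h=0\). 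The assertion for a smooth \(\Hec\)\nb-module \(V\) then follows by tensoring the exact complex with \(V\) over \(\Hec\): since \(\ide{\UC[\sigma]^{(e)}}\Hec\otimes_\Hec V\cong V^{\UC[\sigma]^{(e)}}\), we recover \(\Cell(\Sigma;V)\) and \(\image(\partial_0)=M\otimes_\Hec V=u_\Sigma^{(e)}V\).

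The main obstacle is to prove \(u_\Sigma^{(e)}\ide{\UC[x]^{(e)}}=\ide{\UC[x]^{(e)}}\) for fixed \(x\in\Sigma^\circ\). Using the convolution identity \(\ide{U_1}\ide{U_2}=\ide{U_1U_2}\) whenever \(U_1U_2\) is a subgroup---which holds here because any two polysimplices of \(\Sigma\) lie in a common apartment, so Proposition~\ref{prop:udp} and the construction of Theorem~\ref{thm:UOmegae} identify the subgroup generated by the vertex idempotents of \(\mathrm{vert}(\sigma)\cup\{x\}\) with \(\UC[\tau]^{(e)}\) for a polysimplex \(\tau\in\Sigma\) (present by convexity of \(\Sigma\))---one expands \(u_\Sigma^{(e)}\ide{\UC[x]^{(e)}}=\sum_\sigma(-1)^{\dim\sigma}\ide{\UC[\tau(\sigma,x)]^{(e)}}\) and groups summands by the value of \(\tau(\sigma,x)\). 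For each \(\tau\supsetneq\{x\}\), the two polysimplices \(\sigma=\tau\) and \(\sigma=\tau\setminus\{x\}\) (the face opposite to \(x\), present in \(\Sigma\) since \(\Sigma\) is closed under taking faces) contribute \(\ide{\UC[\tau]^{(e)}}\) with opposite signs and cancel, leaving only \(\tau=\{x\}\). This cancellation is the polysimplicial analogue of the contractibility of the closed star of a vertex.
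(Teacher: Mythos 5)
This theorem is not proved in the paper at all: it is quoted verbatim from the companion paper \cite{Meyer-Solleveld:Resolutions}*{Theorem 2.12}, so your proposal can only be measured against the statement itself and the cancellation technology the present paper does develop (Lemma~\ref{lem:cancel}).  Your reduction is sound and nicely organised: once the two\nobreakdash-sided identity \(u_\Sigma^{(e)}\ide{\UC[x]^{(e)}}=\ide{\UC[x]^{(e)}}=\ide{\UC[x]^{(e)}}u_\Sigma^{(e)}\) is known for every \(x\in\Sigma^\circ\), the idempotency, both descriptions of \(u_\Sigma^{(e)}\Hec\) and \(\bigl(1-u_\Sigma^{(e)}\bigr)\Hec\), and the statement about \(V\) all follow exactly as you argue, using only \(\ide{U_1}\ide{U_2}=\ide{U_2}=\ide{U_2}\ide{U_1}\) for \(U_1\subseteq U_2\) and Theorem~\ref{thm:UOmegae}.\ref{UOmegae_4}.

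The gap is in your proof of that key identity.  The set \(\mathrm{vert}(\sigma)\cup\{x\}\) spans a polysimplex only when \(\sigma\) lies in the closed star of \(x\).  Convexity of \(\Sigma\) gives nothing of the sort for general \(\sigma\in\Sigma\): the relevant convex object is the hull \(\Hull(\sigma,x)\), which is typically a large subcomplex rather than a single polysimplex, and for \(\sigma\) far from \(x\) the product \(\UC[\sigma]^{(e)}\UC[x]^{(e)}\) need not be a group and is certainly not of the form \(\UC[\tau]^{(e)}\) for a polysimplex \(\tau\in\Sigma\).  Consequently your pairing \(\sigma\leftrightarrow\sigma\setminus\{x\}\) only cancels the terms indexed by the closed star of \(x\); what it actually establishes is the identity for \(\Sigma\) equal to that closed star, while the contributions of all remaining polysimplices of \(\Sigma\) are left unaccounted for.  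Making those cancel is the real content of the theorem, and it requires the consistency property \ref{UOmegae_9} (so that \(\ide{\UC[\sigma]^{(e)}}\ide{\UC[x]^{(e)}}\) depends only on a suitable combinatorial projection of \(\sigma\) towards \(x\)) combined with an Euler-characteristic argument over the fibres of that projection --- precisely the mechanism of Lemma~\ref{lem:cancel}, which the paper notes (Remark~\ref{rem:cancel_lemma}) reproves a special case of the companion paper's results.  Without an argument of that kind, your proof does not go through for a general finite convex \(\Sigma\).
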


It is shown in~\cite{Meyer-Solleveld:Resolutions} that there is a convex subcomplex~\(\Sigma_0\) such that \(\ide{\UC[o]^{(r)}} u_\Sigma^{(e)} = \ide{\UC[o]^{(r)}} u_{\Sigma_0}^{(e)}\) for all convex subcomplexes~\(\Sigma\) with \(\Sigma\supseteq\Sigma_0\).  The following lemma describes~\(\Sigma_0\) explicitly.  To state it, we need some notation.  For \(\alpha \in \Phi\) we define
\begin{align*}
  \Apa[\ST,r]^{\alpha{+}} &\defeq \{x\in\Apa : \inp{x}{\alpha} > r \},\\
  \Apa[\ST,r]^{\alpha{0}} &\defeq \{x\in\Apa : \inp{x}{\alpha} \in [-r,r] \},\\
  \Apa[\ST,r]^{\alpha{-}} &\defeq \{x\in\Apa : \inp{x}{\alpha} < -r \},
\end{align*}
and for any map \(\epsilon\colon \Phi\to \{+,0,-\}\) we write
\[
\Apa[\ST,r]^\epsilon \defeq
\bigcap_{\alpha \in \Phi} \Apa[\ST,r]^{\alpha,\epsilon(\alpha)}.
\]
Most of the sets \(\Apa[\ST,r]^\epsilon\) are empty, some are compact, and the others are unbounded.  The non-empty \(\Apa[\ST,r]^\epsilon\) partition~\(\Apa[\ST]\).  Let \(\Apa[\ST,r]^b\) be the union of the bounded \(\Apa[\ST,r]^\epsilon\); this is a polysimplicial subcomplex of~\(\Apa\) which is star-shaped around~\(o\).  The subcomplex \(B_r \defeq \Pa_o \cdot \Apa[\ST,r]^b\) of \(\bt{\F}\) is obviously stable under the action of all the groups \(\UC[o]^{(s)}\) for \(s \in \R_{\ge 0}\).  We may think of~\(B_r\) as a combinatorial approximation to a ball of radius~\(r\) around~\(o\).

\begin{lemma}
  \label{lem:cancel}
  Let \(r \in \Z_{\ge e}\) and let \(\Sigma \subseteq \bt{\F}\) be any finite convex subcomplex that contains~\(B_{r-e}\).  Then
  \[
  \ide{\UC[o]^{(r)}} u_\Sigma^{(e)}
  = \ide{\UC[o]^{(r)}} u_{B_{r-e}}^{(e)}
  = \sum_{\sigma \in B_{r-e}} (-1)^{\deg \sigma}  \ide{\UC[o]^{(r)}} \ide{\UC[\sigma]^{(e)}}.
  \]
\end{lemma}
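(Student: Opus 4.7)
The second equality is just the definition of $u^{(e)}_{B_{r-e}}$, so the content lies entirely in the first equality. Setting
\[
\eta \defeq u_\Sigma^{(e)} - u^{(e)}_{B_{r-e}} = \sum_{\sigma\in\Sigma\setminus B_{r-e}}(-1)^{\dim\sigma}\ide{\UC[\sigma]^{(e)}},
\]
the task reduces to showing $\ide{\UC[o]^{(r)}}\eta = 0$ in $\Hec$, equivalently, that $\eta v$ lies in $\ker \ide{\UC[o]^{(r)}}$ for every $v$ in every smooth $\G$-module.

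My plan is to exploit the resolutions of Theorem~\ref{thm:resolution}. Both $\Sigma$ and $B_{r-e}$ are convex, so $C_*(\Sigma;V)\to u^{(e)}_\Sigma V$ and $C_*(B_{r-e};V)\to u^{(e)}_{B_{r-e}}V$ are acyclic augmented complexes; their quotient $C_*(\Sigma,B_{r-e};V)$ is therefore a resolution of $u^{(e)}_\Sigma V \bigm/ u^{(e)}_{B_{r-e}}V$. Since $\UC[o]^{(r)}$-invariants is an exact functor in good characteristic, it then suffices to show that $C_*(\Sigma,B_{r-e};V)^{\UC[o]^{(r)}}$ is exact in every degree, \emph{including zero}; the vanishing in degree~0 is exactly the required statement about $\ide{\UC[o]^{(r)}}\eta$.

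The geometric mechanism is that $\UC[o]^{(r)}$ pointwise fixes all of $B_r \supseteq B_{r-e}$ (combine the formula for $\Apa^{\UC[o]^{(r)}}$ given just before Section~\ref{sec:representation} with Theorem~\ref{thm:UOmegae}.\ref{UOmegae_2} and the transitivity of $\PC[o]$ on apartments through~$o$), but acts nontrivially on polysimplices in $\Sigma\setminus B_{r-e}$. I would stratify $\Sigma\setminus B_{r-e}$ by the unbounded sign vectors $\epsilon$ of the regions $\Apa[\ST,r-e]^\epsilon$, using the $\PC[o]$-action to transport any polysimplex to the standard apartment. For each unbounded $\epsilon$, pick a cocharacter $\lambda\in X_*(\Salg)$ with $\inp{\alpha}{\lambda}>0$ for every $\alpha$ with $\epsilon(\alpha)=+$; translation by $\lambda(\unif)^n$ shifts polysimplices toward infinity in that cone, producing (after $\UC[o]^{(r)}$-averaging, which commutes with this translation thanks to the normalisation by $\PC[o]$) a contracting chain homotopy on the $\UC[o]^{(r)}$-invariants of the relative complex, cone by cone. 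Proposition~\ref{prop:udp} is the key combinatorial input: the factor $\Un_{\alpha,f^*_\sigma(\alpha)+e}$ of $\UC[\sigma]^{(e)}$ absorbs $\Un_{\alpha,r+}\subseteq\UC[o]^{(r)}$ along outgoing directions, while $\UC[o]^{(r)}$ absorbs the $\sigma$-factor along incoming ones, giving the explicit identifications needed for the homotopy.

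The main obstacle is verifying that the contraction yields an \emph{exact} cancellation of Hecke-algebra elements—that is, identities of the form $\ide{\UC[o]^{(r)}}\ide{\UC[\sigma]^{(e)}} = \ide{\UC[o]^{(r)}}\ide{\UC[\tau]^{(e)}}$ for paired polysimplices, rather than mere agreement of underlying subspaces of $V$. The discrete jumps in the filtrations $\{\Un_{\alpha,k}\}$ and the ``$+$''-adjustments in~\eqref{eq:f*Omega} make naive pairings fail by an infinitesimal: one can already verify in $\mathrm{SL}_2$ that $\UC[o]^{(r)}\UC[v_n]^{(e)}\subsetneq\UC[o]^{(r)}\UC[E_n]^{(e)}$ due to a single jump in $\Gamma_\alpha = \Z$. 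A correct pairing must respect the alignment of these jumps and will use the product decomposition of Proposition~\ref{prop:udp} together with the commutator containment~\eqref{eq:CommutatorGroups} to control the error terms produced by reordering root-subgroup factors.
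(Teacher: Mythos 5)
Your proposal does not close the lemma; it stops exactly where the proof has to begin. Two things go wrong. First, the reduction is too weak: exactness of \(C_*(\Sigma,B_{r-e};V)^{\UC[o]^{(r)}}\) in degree zero would only give the equality of \emph{subspaces} \(\ide{\UC[o]^{(r)}}u_\Sigma^{(e)}V = \ide{\UC[o]^{(r)}}u_{B_{r-e}}^{(e)}V\) for each smooth \(V\) (equivalently, equality of the right ideals these elements generate in \(\Hec\)); since a product of idempotents is not determined by its range, this does not yield the asserted identity of \emph{elements} of \(\Hec\), and no contracting homotopy of the relative complex can by itself produce it. Second, and decisively, the one substantive step — an element-level cancellation among the terms \(\ide{\UC[o]^{(r)}}\ide{\UC[\sigma]^{(e)}}\) for \(\sigma\in\Sigma\setminus B_{r-e}\) — is precisely what you set aside as the unresolved ``main obstacle''. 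That cancellation \emph{is} the lemma; a proof cannot defer it.

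For comparison, the paper supplies it as follows. For each unbounded region \(\Apa[\ST,r-e]^\epsilon\) one fixes a direction \(\delta^\epsilon\) into the region and pairs each facet with the adjacent facet obtained by an infinitesimal push along \(\pm\delta^\epsilon\); the key identity \(\ide{\UC[o]^{(r)}}\ide{\UC[F]^{(e)}} = \ide{\UC[o]^{(r)}}\ide{\UC[M(F)]^{(e)}}\) is then proved root by root via Proposition~\ref{prop:udp}: for roots with \(\epsilon(\alpha)={-}\) both \(\alpha\)-factors are swallowed by \(\Un_{\alpha,r+}\subseteq\UC[o]^{(r)}\), while for the remaining roots the factors of \(\UC[F]^{(e)}\) and \(\UC[M(F)]^{(e)}\) coincide because the ``\(+\)'' in \eqref{eq:f*Omega} for the lower-dimensional facet exactly compensates the supremum over the adjacent one. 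This is also the answer to your \(\mathrm{SL}_2\) worry: the outward pairing of a vertex \(v_n\) with the edge beyond it indeed fails by one filtration jump, but the pairing that works matches \(v_n\) with the edge on its \(o\)-side, where the jump sits in a root direction absorbed by \(\UC[o]^{(r)}\). The alternating sum over each fibre \(M^{-1}(F)\) then vanishes because the fibre is a half-open cell \(\tau\times F_\perp\) of Euler characteristic zero, and \(M\) extends \(\Pa_o\)-equivariantly to the building. Note this is a purely local, one-step pairing; your translation by \(\lambda(\unif)^n\) immediately leaves the finite complex \(\Sigma\) and cannot be organised into such a pairing, nor does averaging it over \(\UC[o]^{(r)}\) produce identities in \(\Hec\). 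As it stands, the proposal identifies the right difficulty but does not contain a proof.
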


\begin{proof}
  Fix \(\epsilon\colon \Phi\to \{ +,0,- \}\) such that \(\Apa[\ST, r-e]^\epsilon\) is \emph{unbounded}.  First we establish \(\ide{\UC[o]^{(r)}} \ide{\UC[F]^{(e)}} = \ide{\UC[o]^{(r)}} \ide{\UC[F']^{(e)}}\) for certain facets \(F ,F' \subseteq \Apa[\ST,r-e]^\epsilon\).  The coroots \(\alpha^\vee \in \Phi^\vee\) with \(\epsilon (\alpha) = 0\) span a proper subspace \(\Apa[\ST,\perp]^\epsilon \subsetneq \Apa\).  We may pick a non-zero vector \(\delta^\epsilon \in \Apa\) such that
  \begin{enumerate}
  \item \(\delta^\epsilon\) is orthogonal to \(\Apa[\ST,\perp]^\epsilon\),
  \item \(\Apa[\ST,r-e]^\epsilon + \R_{\ge 0} \delta^\epsilon \subseteq \Apa[\ST,r-e]^\epsilon\),
  \item \(\delta^\epsilon\) lies in the span of an irreducible root subsystem~\(\Psi^\vee\) of~\(\Phi^\vee\) (here we decompose~\(\Phi^\vee\) as a direct sum of irreducible root systems).
  \end{enumerate}
  For every facet \(F \subseteq \Apa[\ST,r-e]^\epsilon\) let \(M(F) \subseteq \Apa[\ST,r-e]^\epsilon\) be the unique facet such that for all \(x \in F\) there exists \(\lambda > 0\) with \(x + \lambda \cdot \delta^\epsilon \subseteq M(F)\).  We claim that
  \begin{equation}
    \label{eq:uuuu}
    \ide{\UC[o]^{(r)}} \ide{\UC[F]^{(e)}}
    = \ide{\UC[o]^{(r)}} \ide{\UC[M(F)]^{(e)}}
    \qquad\text{for \(F \subseteq \Apa[\ST,r-e]^\epsilon\).}
  \end{equation}
  In view of the unique decomposition property (Proposition~\ref{prop:udp}) this is equivalent to
  \[
  \bigl( \UC[o]^{(r)} \cup \UC[F]^{(e)} \bigr) \cap \Un_\alpha
  = \bigl( \UC[o]^{(r)} \cup \UC[M(F)]^{(e)} \bigr) \cap \Un_\alpha
  \qquad \text{for all \(\alpha \in \Phr\).}
  \]
  By definition, \(\UC[o]^{(r)} \cap \Un_\alpha = \Un_{\alpha, r{+}}\) and \(\UC[F]^{(e)} \cap \Un_\alpha = \Un_{\alpha ,-\alpha (x)+e{+}}\) for \(x \in F\).  If \(\epsilon (\alpha) = {-}\), then \(-\alpha +e > r\) on \(F \cup M(F)\), so that
  \[
  \Un_{\alpha ,r{+}} \supseteq \Un_\alpha \cap \bigl( \UC[o]^{(r)} \cup \UC[M(F)]^{(e)} \bigr).
  \]
  If \(\epsilon (\alpha) \neq {-}\), then \(\sup_{x \in F} {-}\alpha (x) \le \sup_{x \in M(F)} {-}\alpha (x)\), which combined with \(\UC[F]^{(e)} \subseteq \UC[M(F)]^{(e)}\) yields \(\UC[F]^{(e)} \cap \Un_\alpha = \UC[M(F)]^{(e)} \cap \Un_\alpha\).  This finishes the proof of~\eqref{eq:uuuu}.

  Now we use~\eqref{eq:uuuu} to establish some cancellation.  Every facet~\(F\) in~\(\Apa\) can be written uniquely as \(F = F_\Psi \times F_\perp\), where \(F_\Psi\) and~\(F_\perp\) are facets in \(\R \Psi^\vee\) and \(\Psi^\perp \subseteq \Apa\), respectively.  Consider a facet \(F \subseteq \Apa[\ST,r-e]^\epsilon\) such that \(M^{-1}(F)\) is not empty.  Then \(M(F) = F\), and \(M^{-1}(F)\) consists of facets of~\(\cl{F}\).  Property~(3) above shows that \(F'_\perp = F_\perp\) for any \(F' \in M^{-1}(F)\).  Hence
  \[
  \bigcup_{F' \in M^{-1}(F)} F' = \tau \times F_\perp ,
  \]
  where \(\tau \subseteq \R \Psi^\vee\) consists of the facets of~\(\cl{F_\Psi}\) that contain points of the form \(x + \lambda \delta^\epsilon\) with \(x \in F\) and \(\lambda \ge 0\).  In particular, \(\tau\) is diffeomorphic to
  \[
  (-1,1] \delta^\epsilon + \{ x \in F : \inp{x}{\delta^\epsilon} = c \}
  \]
  for some \(c \in \R\), so that the Euler characteristic of~\(\tau\) is zero.  Therefore,
  \begin{multline}
    \label{eq:Eulertau}
    \sum_{F' \in M^{-1}(F)} (-1)^{\dim F'} =
    \sum_{F' \in M^{-1}(F)} (-1)^{\dim F'_\Psi} (-1)^{\dim F_\perp} = \\
    \sum_{\text{\(\tau'\) facet in \(\tau\)}} (-1)^{\dim \tau'} (-1)^{\dim F_\perp}= 0 ,
  \end{multline}
  which together with~\eqref{eq:uuuu} yields
  \begin{equation}
    \label{eq:cancel}
    \sum_{F' \in M^{-1}(F)} (-1)^{\dim F'} \ide{\UC[o]^{(r)}} \ide{\UC[F']^{(e)}} \; = \; 0 \;
    \in \Hec.
  \end{equation}
  Suppose that~\(\Apa\) is any apartment of \(\bt{\F}\) that contains~\(o\) and at least one facet \(F' \in M^{-1}(F)\).  As~\(\delta^\epsilon\) points away from~\(o\), the apartment~\(\Apa\) contains points of~\(F\), so that \(\cl{F} \subseteq \Apa\).  This enables us to extend the map~\(M\) to all facets of \(\bt{\F}\).  Recall that any Weyl chamber \(\Apa^+ \subseteq \Apa\) is a fundamental domain for the action of~\(\Pa_o\) on \(\bt{\F}\).  On~\(\Apa^+\) we define~\(M\) according to the above recipe and by \(M(F) \defeq F\) if \(F \subseteq \Apa[\ST,r-e]^b \cap \Apa^+\).  The properties (1)--(3) of~\(\delta^\epsilon\) ensure that \(M(F)\) and~\(F\) have the same isotropy group in~\(\Pa_o\), so we can extend~\(M\) \(\Pa_o\)\nb-equivariantly to \(\bt{\F}\).

  Since~\(\Sigma\) contains~\(o\) and is a convex subcomplex of \(\bt{\F}\), its collection of facets is stable under~\(M\).  By definition
  \begin{multline*}
    \ide{\UC[o]^{(r)}} u_\Sigma^{(e)}
    = \ide{\UC[o]^{(r)}} \sum_{\sigma\in\Sigma} (-1)^{\deg\sigma}\ide{\UC[\sigma]^{(e)}}
    \\= \ide{\UC[o]^{(r)}} \sum_{\text{\(F\)~facet of~\(\Sigma\)}} \
    \sum_{F' \in M^{-1}(F)} (-1)^{\dim F'} \ide{\UC[F']^{(e)}}.
  \end{multline*}
  Now~\eqref{eq:cancel} (which only holds for facets of unbounded \(\Apa[\ST, r-e]^\epsilon\)) shows that the facets of \(\Sigma \backslash B_{r-e}\) do not contribute to this sum.  As~\(M\) is the identity on facets of~\(B_{r-e}\), we remain with \(\ide{\UC[o]^{(r)}} u_\Sigma^{(e)} = \ide{\UC[o]^{(r)}} u_{B_{r-e}}^{(e)}\).
\end{proof}

\begin{remark}
  \label{rem:cancel_lemma}
  Lemma~\ref{lem:cancel} provides a direct proof of the special case of \cite{Meyer-Solleveld:Resolutions}*{Proposition 3.6} where the consistent system of idempotents is \(\ide{\UC[x]^{(e)}}\); this proof does not use the fact that the Hecke algebra is Noetherean.
\end{remark}

We turn to the space of invariants~\(V^{\UC[o]^{(r)}}\).  Since it has finite dimension, it is contained in the range of~\(u_\Sigma^{(e)}\) for some finite convex subcomplex \(\Sigma \subseteq \bt{\F}\).  We may as well assume that~\(\Sigma\) contains~\(B_{r-e}\), so that Lemma~\ref{lem:cancel} yields
\[
V^{\UC[o]^{(r)}}
= \ide{\UC[o]^{(r)}} u_\Sigma^{(e)} V
= \Bigl(\sum_{\sigma \in B_{r-e}} (-1)^{\deg\sigma}
\ide{\UC[o]^{(r)}} \ide{\UC[\sigma]^{(e)}} \Bigr) V.
\]
The right hand side is contained in \(\sum_{x \in B_{r-e}^\circ} \ide{\UC[o]^{(r)}} \ide{\UC[x]^{(e)}} V\) by Theorem~\ref{thm:UOmegae}.\ref{UOmegae_4}.  It is the space of \(\UC[o]^{(r)}\)\nb-invariants in \(\sum_{x \in B_{r-e}^\circ} \ide{\UC[x]^{(e)}} V\) because \(\sum_{x \in B_{r-e}^\circ} \ide{\UC[x]^{(e)}} V\) is \(\Pa_o\)\nb-invariant.  Let \(\Pa_o\supseteq \ide{\UC[o]^{(r)}}\) act on \(\bigoplus_{x \in B_{r-e}^\circ} \ide{\UC[x]^{(e)}} V\) by \(g \cdot (x,v) = (g \cdot x, \repr (g) v)\).  Then \(\sum_{x \in B_{r-e}^\circ} \ide{\UC[o]^{(r)}} \ide{\UC[x]^{(e)}} V\) is a quotient of \(\bigoplus_{x \in B_{r-e}^\circ} \ide{\UC[x]^{(e)}} V\).  The addition map
\[
\biggl( \bigoplus_{x \in B_{r-e}^\circ} \ide{\UC[x]^{(e)}} V \biggr)^{\UC[o]^{(r)}} \to
\biggl( \sum_{x \in B_{r-e}^\circ} \ide{\UC[x]^{(e)}} V \biggr)^{\UC[o]^{(r)}}
\]
is surjective because~\(\UC[o]^{(r)}\) is compact and we are working in good characteristic.  Since there are only finitely many \(\G\)\nb-orbits of vertices in \(\bt{\F}\),
\begin{equation}
  \label{eq:MV}
  m_V \defeq \max_{x \in \bt{\F}} \dim V^{\UC[x]^{(e)}}
\end{equation}
exists.  The dimension of \(\bigl( \bigoplus_{x \in B_{r-e}^\circ} \ide{\UC[x]^{(e)}} V \bigr)^{\UC[o]^{(r)}}\) is at most \(m_V \abs{B_{r-e}^\circ / \UC[o]^{(r)}}\).

It remains to estimate the number of \(\UC[o]^{(r)}\)-orbits of vertices in~\(B_{r-e}\).  For \(\alpha \in \Phi\) let~\(d_\alpha\) be the dimension of \(\Lie(\Ualg_\alpha / \Ualg_{2 \alpha})\) and let~\(d_0\) be the dimension of \(\Lie(\Calg(\Salg))\).  Recall that \(q = \abs{\integ / \mathfrak \Pa}\) and that~\(n_\alpha^{-1} \Z\) is the set of jumps of the filtration of~\(\Un_\alpha\).

\begin{lemma}
  \label{lem:orbits}
  The number of \(\UC[o]^{(r)}\)-orbits on~\(B_{r-e}^\circ\) is of order \(\Oest(r^{\dim \Apa} Q^r)\), where
  \[
  Q\defeq \exp \biggl( \log (q) \sum_{\alpha\in\Phr}
  \frac{d_\alpha n_\alpha}{2} + \frac{d_{2 \alpha} n_{2 \alpha}}{4} \biggr).
  \]
\end{lemma}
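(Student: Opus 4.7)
The plan is to split the count into a polynomial factor coming from vertices of the apartment and an exponential factor coming from the size of the parahoric quotient.  First I would exploit the defining identity \(B_{r-e}=\Pa_o\cdot\Apa[\ST,r-e]^b\) and the classical fact that a Weyl chamber \(\Apa^+\subseteq\Apa\) is a fundamental domain for the action of \(\Pa_o\) on \(\bt{\F}\).  Every vertex of \(B_{r-e}\) is therefore \(\Pa_o\)\nb-conjugate to a vertex of \(\Apa^+\cap\Apa[\ST,r-e]^b\), a region cut out of the \(\dim\Apa\)-dimensional affine space \(\Apa\) by the finitely many linear inequalities \(\inp{x}{-\alpha}\le r-e\).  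This region is a polytope of linear diameter \(\Oest(r)\), and since the polysimplicial vertex structure on \(\Apa\) is periodic, it contains \(\Oest(r^{\dim\Apa})\) vertices, supplying the polynomial factor.

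Next I would bound, for each such representative vertex~\(y\), the number of \(\UC[o]^{(r)}\)-orbits in \(\Pa_o\cdot y\) by \(|\UC[o]^{(r)}\backslash\Pa_o/\operatorname{Stab}_{\Pa_o}(y)|\).  Since \([\Pa_o:\UC[o]^{(0)}]\) is bounded by a constant depending only on~\(\G\) (essentially a finite Weyl quotient), it suffices to estimate an appropriate sub-index of \([\UC[o]^{(0)}:\UC[o]^{(r)}]\).  Here I would use the Iwahori-type decomposition of Proposition~\ref{prop:decompose}.\ref{prop:decomposec} together with the observation that, for a strictly dominant~\(y\), the positive unipotent piece \(\UC[o]^{(0)}\cap\Un^+\) fixes \(y\) pointwise; consequently only the opposite side of the decomposition, effectively a product over \(\Phr^+\) of root subgroups, contributes new \(\UC[o]^{(r)}\)-orbits.

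For that remaining sub-index I would apply the unique decomposition property (Proposition~\ref{prop:udp}) to factor it as
\[
[H_{0+}:H_{r+}]\cdot\prod_{\alpha\in\Phr^+}[\UC[o]^{(0)}\cap\Un_\alpha:\UC[o]^{(r)}\cap\Un_\alpha],
\]
and compute each factor from the filtrations recalled in Section~\ref{sec:prolonged_root}.  For \(\alpha\) with \(2\alpha\notin\Phi\), this factor equals \(q^{d_\alpha n_\alpha r+\Oest(1)}\).  For \(\alpha\) with \(2\alpha\in\Phi\), the relation \(\Un_{2\alpha,s}=\Un_{2\alpha}\cap\Un_{\alpha,s/2}\) -- in which the parameter halves when passing between the two filtrations -- must be analysed through the short exact sequence \(1\to\Un_{2\alpha}\to\Un_\alpha\to\Un_\alpha/\Un_{2\alpha}\to1\) by computing how \(\Un_{\alpha,r+}\Un_{2\alpha,r+}\) intersects \(\Un_{2\alpha}\) (yielding \(\Un_{2\alpha,r+}\)) and projects to the quotient.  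Summing the resulting exponents over \(\Phr^+\) and symmetrising via \(\sum_{\Phr^+}=\tfrac12\sum_{\Phr}\) produces exactly the exponent in the definition of~\(Q\); the torus factor \([H_{0+}:H_{r+}]\) is absorbed into the \(\Oest(1)\) constants.

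The principal obstacle is the bookkeeping of the root-factor indices when \(2\alpha\in\Phi\): the filtration on \(\Un_\alpha\) does not split as a direct product of those on \(\Un_\alpha/\Un_{2\alpha}\) and on \(\Un_{2\alpha}\), and the precise coefficients \(\tfrac{d_\alpha n_\alpha}{2}\) and \(\tfrac{d_{2\alpha}n_{2\alpha}}{4}\) depend on both the half-parameter shift in the defining relation and the passage from positive roots to~\(\Phr\).  Verifying that the cancellation from the Iwahori-type decomposition works uniformly for every vertex -- including those on the boundary of the dominant chamber, where the positive unipotent does not strictly fix \(y\) -- is the technical heart of the argument; once it is in place, combining the exponential bound with the vertex count yields \(\Oest(r^{\dim\Apa}Q^r)\).
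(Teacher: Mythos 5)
Your proposal is correct and follows essentially the same route as the paper: reduce to $\Oest(r^{\dim \Apa})$ cells of the truncated apartment via the $\Pa_o$-action, observe that one unipotent half of the parahoric already fixes the cell, and compute the index of the other half root-by-root through the unique decomposition property, with the factors $\tfrac12$ and $\tfrac14$ in $Q$ arising exactly as you describe. The only organisational difference is that the paper sidesteps the boundary issue you flag by choosing the positive system $\Phi^+$ afresh for each facet $F$ (so that $\alpha(F)\ge 0$ for all $\alpha\in\Phi^+$) instead of fixing one Weyl chamber once and for all, and it counts facets of $\Apa[\ST,r-e]^b$ rather than vertices of a fundamental domain.
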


\begin{proof}
  Recall from~\eqref{eq:POmega} and Proposition~\ref{prop:decompose}.\ref{prop:decomposec} that
  \[
  \Pa_o = \UC[o] \NC[o] = \UC[o]^+ \UC[o]^- (\Pa_o \cap \No(\ST)) ,
  \]
  for any positive root system \(\Phi^+\) of \(\Phi\).  Hence every facet of \(B_{r-e} = \Pa_o \cdot \Apa[\ST,r-e]^b\) is of the form~\(u \cdot F\) with \(u \in \UC[o]^+ \UC[o]^-\) and a facet~\(F\) of~\(\Apa\).  Fix~\(F\) and choose a positive root system~\(\Phi^+\) such that \(\alpha (F) \ge 0\) for all \(\alpha \in \Phi^+\).  Then \(\UC[o]^- \subseteq \UC[F]^-\) fixes~\(F\) pointwise, so that we only need \(u \in \UC[o]^+\).  By Propositions \ref{prop:decompose}.\ref{prop:decomposeb} and~\ref{prop:udp} the product maps
  \[
  \prod_{\alpha\in\Phr\cap\Phi^+} \Un_{\alpha,0} \to \UC[o]^+,\qquad
  \prod_{\alpha\in\Phr\cap\Phi^+} (\Un_\alpha \cap \UC[o]^{(r)}) \to \Un^+ \cap \UC[o]^{(r)}
  \]
  are diffeomorphisms.  Together with the conventions~\eqref{eq:Ualphar} we get
  \begin{equation}
    \label{eq:Uo+}
    \begin{aligned}[t]
      [\UC[o]^+ : \UC[o]^+ \cap \UC[o]^{(r)}]
      &= \prod_{\alpha\in\Phr\cap\Phi^+ \!\!\!} [\Un_{\alpha,0} : \Un_\alpha \cap \UC[o]^{(r)}]
      \\&= \prod_{\alpha\in\Phr\cap\Phi^+ \!\!\!} [\Un_{\alpha,0} \Un_{2 \alpha ,0}: \Un_{\alpha ,r{+}} \Un_{2 \alpha, 2r{+}}]
      \\&= \prod_{\alpha\in\Phr\cap\Phi^+ \!\!\!} [\Un_{\alpha,0} / \Un_{2\alpha,0} : \Un_{\alpha ,r{+}} / \Un_{2 \alpha, 2r{+}}] \cdot [\Un_{2\alpha,0}: \Un_{2\alpha, 2r{+}}].
    \end{aligned}
  \end{equation}
  Since we are dealing with unipotent pro-\(p\)-groups, these indices can be read off from the Lie algebras.  For \(\alpha \in \Phi\) and \(s\in n_\alpha^{-1} \Z\), the construction from \eqref{eq:Usplit} and~\eqref{eq:Ualphar} shows that \(\Un_{\alpha,s} \supsetneq \Un_{\alpha,s{+}}\) corresponds to multiplying a lattice in \(\Lie(\Ualg_\alpha)\) with the maximal ideal~\(\mathfrak \Pa\) of~\(\integ\), see also \cite{Tits:Reductive}*{3.5.4}.  Hence
  \begin{align*}
    [\Un_{\alpha ,s} / \Un_{2 \alpha ,2s} : \Un_{\alpha ,s{+}} / \Un_{2\alpha ,2s{+}} ] &= q^{d_\alpha}, \\
    [\Un_{\alpha ,0} / \Un_{2 \alpha ,0} : \Un_{\alpha ,r{+}} / \Un_{2\alpha ,2r{+}} ] &= q^{d_\alpha \lceil n_\alpha r{+} \rceil},
  \end{align*}
  where \(\lceil y{+} \rceil\) denotes the smallest integer larger than \(y+ \in \tilde{\R}\).  Similarly
  \[
  [\Un_{2 \alpha ,0} : \Un_{2 \alpha ,r{+}}] = q^{d_{2\alpha} \lceil n_{2\alpha} r/2 {+} \rceil},
  \]
  from which we conclude that
  \begin{multline}
    \label{eq:prodq}
    [\UC[o]^+ : \Un^+ \cap \UC[o]^{(r)}]
    = \prod_{\alpha \in \Phr \cap \Phi^+} q^{d_\alpha \lceil n_\alpha r{+} \rceil}
  q^{d_{2 \alpha} \lceil n_{2\alpha} r/2 + \rceil} \\
    \le \prod_{\alpha\in\Phr} q^{d_\alpha ( n_\alpha r + 1)/2} q^{d_{2 \alpha} (n_{2\alpha} r + 2) /4}.
  \end{multline}
  This number is an upper bound for the number of \(\UC[o]^{(r)}\)\nb-orbits in \(\UC[o] \cdot F\).  Since it does not depend on~\(F\), we only need to multiply it with the number of facets of \(\Apa[\ST,r-e]^b\).  While this number is not easily expressible in a formula, it clearly grows like~\(r^{\dim \Apa}\).
\end{proof}

\begin{theorem}
  \label{thm:estdim}
  Let \((\repr ,V)\) be an admissible \(\G\)\nb-representation of level \(e \in \Z_{\ge 0}\) in good characteristic.  Let \(r\in\R_{\ge e}\) and define \(Q\) and~\(m_V\) as in Lemma~\textup{\ref{lem:orbits}} and~\eqref{eq:MV}.  Then
  \begin{align*}
    \dim V^{\UC[o]^{(r)}} &= \Oest(m_V r^{\dim \Apa} Q^r)\\
    \mu \bigl( \UC[o]^{(r)} \bigr) \dim V^{\UC[o]^{(r)}}
    &= \Oest (m_V r^{\dim \Apa} q^{-r d_0} Q^{-r})
  \end{align*}
  with constants independent of~\(V\) and~\(r\).
\end{theorem}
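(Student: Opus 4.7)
The plan is to assemble the ingredients already prepared just before the statement and to add an elementary Haar-measure calculation.

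\emph{First estimate.} The chain of identifications immediately preceding the theorem gives
\[
V^{\UC[o]^{(r)}}
= \Bigl(\sum_{\sigma\in B_{r-e}} (-1)^{\dim\sigma} \ide{\UC[o]^{(r)}}\ide{\UC[\sigma]^{(e)}}\Bigr) V
\subseteq \Bigl(\sum_{x\in B_{r-e}^\circ} \ide{\UC[x]^{(e)}} V\Bigr)^{\UC[o]^{(r)}},
\]
using Theorem~\ref{thm:UOmegae}.\ref{UOmegae_4} and the $\Pa_o$-invariance of the right hand side. Good characteristic allows averaging over the compact group $\UC[o]^{(r)}$, so the addition map from the corresponding direct sum to this sum is surjective on $\UC[o]^{(r)}$-invariants. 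Parameterising the direct-sum invariants by the $\UC[o]^{(r)}$-orbits in $B_{r-e}^\circ$ and bounding each summand's invariant subspace by $m_V$ yields
\[
\dim V^{\UC[o]^{(r)}}
\le m_V \cdot \bigl|B_{r-e}^\circ / \UC[o]^{(r)}\bigr|,
\]
and Lemma~\ref{lem:orbits} converts this into the first bound $\Oest(m_V r^{\dim\Apa} Q^r)$.

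\emph{Second estimate.} Bound $\mu(\UC[o]^{(r)})$ via Proposition~\ref{prop:udp}: applying the unique decomposition to both $\UC[o]^{(0)}$ and $\UC[o]^{(r)}$ factorises the index as
\[
[\UC[o]^{(0)}:\UC[o]^{(r)}]
= [H_{0{+}}:H_{r{+}}] \cdot \prod_{\alpha\in\Phr}\bigl[\Un_\alpha\cap\UC[o]^{(0)} : \Un_\alpha\cap\UC[o]^{(r)}\bigr].
\]
Pairing $\alpha$ with $-\alpha$ and invoking \eqref{eq:Uo+}--\eqref{eq:prodq}, the root-subgroup contributions combine to $\Oest(Q^{2r})$; the analogous Lie-algebra-modulo-$\maxid$ analysis of the filtration $(H_s)$, whose successive quotients have cardinality $q^{d_0}$, yields $[H_{0{+}}:H_{r{+}}] = \Oest(q^{d_0 r})$. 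Since $[\UC[o]:\UC[o]^{(0)}]$ is a fixed finite constant, this gives $\mu(\UC[o]^{(r)}) = \Oest(q^{-d_0 r} Q^{-2r})$; multiplying by the first estimate produces the claimed $\Oest(m_V r^{\dim\Apa} q^{-d_0 r} Q^{-r})$.

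The main work is bookkeeping in the measure computation: one has to verify that $\Un^+$ and $\Un^-$ together contribute exactly $Q^{2r}$ so that only $Q^{-r}$ survives after cancellation with the $Q^r$ in the dimension estimate, and that the $(H_s)$-filtration, correctly normalised, contributes exactly $q^{d_0 r}$ rather than some other power.
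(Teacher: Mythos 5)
Your argument is essentially the paper's own proof: the first estimate is exactly the chain of reductions preceding the theorem (surjectivity of the averaged addition map in good characteristic, the bound by \(m_V\,\abs{B_{r-e}^\circ/\UC[o]^{(r)}}\)) combined with Lemma~\ref{lem:orbits}, and the second rests on the same computation of \([\UC[o]^{(0)}:\UC[o]^{(r)}]\) via Proposition~\ref{prop:udp}. One caution on the second estimate: you need a \emph{lower} bound on that index (equivalently an upper bound on \(\mu(\UC[o]^{(r)})\)), so writing ``the root-subgroup contributions combine to \(\Oest(Q^{2r})\)'' and ``\([H_{0{+}}:H_{r{+}}]=\Oest(q^{d_0 r})\)'' states the inequalities in the wrong direction; the computation of \eqref{eq:Uo+}--\eqref{eq:prodq} does give the needed lower bound \(\ge q^{d_0 r}Q^{2r}\) (which is all the paper claims, precisely because the exact jumps of the filtration \((H_s)\) are not known, so ``successive quotients have cardinality \(q^{d_0}\)'' should be weakened to ``at least \(q^{d_0}\) per unit of \(r\)''), and with that correction everything goes through.
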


\begin{proof}
  The first estimate follows from Lemma~\ref{lem:orbits} and the arguments above.  Proposition~\ref{prop:udp} yields
  \[
  [\UC[o]^{(s)} : \UC[o]^{(r+s)}]
  = [H_{s{+}} : H_{r+s{+}}] \prod_{\alpha \in \Phr}
  [\Un_{\alpha ,s{+}} \Un_{2 \alpha ,s{+}}: \Un_{\alpha ,r+s{+}} \Un_{2 \alpha, r+s{+}}]
  \]
  for all \(s \in \Z_{\ge 0}\).  A calculation like the one in \eqref{eq:Uo+} and~\eqref{eq:prodq} shows that this index is at least
  \[
  q^{r d_0} \prod_{\alpha \in \Phr} q^{r n_\alpha d_\alpha} q^{r n_{2 \alpha} d_{2 \alpha} /2}.
  \]
  (We cannot be exact because we do not know at which points the filtration of~\(H\) jumps.)  This yields the second estimate.
\end{proof}

These estimates are sharp in some examples: \eqref{eq:growthGLn} shows that (a) and~(c) cannot be improved for \(\GL_n\).  Here all \(n_\alpha\) and~\(d_\alpha\) are~\(1\), \(\Phi\) is reduced, and there are \(n(n-1)/2\) positive roots, so that \(Q=q^{n(n-1)/2}\).

\section{Conclusion}
\label{sec:conclusion}

Let~\(\G\) be a reductive \(p\)\nb-adic group and let \((\rho,V)\) be an admissible representation of~\(\G\) on a vector space~\(V\) of characteristic not equal to~\(p\).  We have seen that the character of~\((\rho,V)\) is a locally constant function on the set of regular semi-simple elements, and we have described explicit open subsets on which it is constant.  Furthermore, we have estimated the growth of the dimensions of the fixed-point subspaces \(V^{\UC[x]^{(e)}}\) for \(e\to\infty\).  Both results are based on the main result of~\cite{Meyer-Solleveld:Resolutions} about the acyclicity of certain coefficient systems on the affine Bruhat--Tits building.

It is still unclear whether Harish-Chandra's theorem about the local integrability of the character function for complex representations can be established using these resolutions.  This may depend on a better understanding of the character formulas.  While the resolution in~\cite{Meyer-Solleveld:Resolutions} does provide an explicit formula for the character, more work is required to understand and simplify this formula.

\begin{bibdiv}
  \begin{biblist}
    \bibselect{references}
  \end{biblist}
\end{bibdiv}
\end{document}